\newtheorem{theo}{Theorem}
\newtheorem{ex}[theo]{Example}
\newtheorem{lem}[theo]{Lemma}
\newtheorem{defn}[theo]{Definition}
\newtheorem{cor}[theo]{Corollary}
\newtheorem{rem}[theo]{Remark}
\newtheorem{prop}[theo]{Proposition}
\newcommand{\HP}{\operatorname{HP}}
\newcommand{\HH}{\operatorname{HH}}
\newcommand{\ext}{\operatorname{Ext}}
\newcommand{\Hom}{\operatorname{Hom}}
\newcommand{\Tor}{\operatorname{Tor}}
\newcommand{\im}{\operatorname{im}}
\newcommand{\ann}{\operatorname{Ann}}
\newcommand{\rank}{\operatorname{rank}}
\newcommand{\Alt}{\operatorname{Alt}}
\newcommand{\opp}{\operatorname{op}}
\newcommand{\sx}{\mathsf{x}}
\newcommand{\sy}{\mathsf{y}}
\newcommand{\sz}{\mathsf{z}}
\newcommand{\sA}{\mathsf{A}}
\newcommand{\sB}{\mathsf{B}}
\newcommand{\sd}{\mathsf{d}}
\newcommand{\sa}{\mathsf{a}}
\newcommand{\bb}{\mathsf{b}}
\newcommand{\scc}{\mathsf{c}}
\newcommand{\sff}{\mathsf{f}}
\newcommand{\sM}{\mathsf{M}}
\newcommand{\sR}{\mathsf{R}}
\newcommand{\cA}{\mathcal{A}}
\newcommand{\cB}{\mathcal{B}}
\newcommand{\cC}{\mathcal{C}}
\newcommand{\kq}{{k[q^{\pm 1}]}}
\author{Matthew Towers}
\title{Poisson and Hochschild cohomology and the semiclassical limit}
\email{m.towers@kent.ac.uk}
\address{School of Mathematics, Statistics and Actuarial Science,
University of Kent, UK.}
\begin{document}
\maketitle
%\definecolor{grey}{gray}{.75} \pagestyle{myheadings}
%\settimeformat{xxivtime} \markboth{ {\textcolor{grey}{ Version of
%      \today \, at \currenttime}}}{ \textcolor{grey}{ Version of
%    \today \, at \currenttime}}
\begin{abstract}
  Let $\sA$ be a quantum algebra possessing a semiclassical limit $A$.
  We show that under certain hypotheses $\sA^e$ can be thought of as a deformation of the Poisson enveloping algebra of $A$, and we give a
  criterion for the Hochschild cohomology of $\sA$ to be a deformation
  of the Poisson cohomology of $A$ in the case that $\sA$ is Koszul. We
  verify that condition for the algebra of $2\times 2$ quantum
  matrices and calculate its Hochschild
  cohomology and the Poisson cohomology of its semiclassical limit.
\end{abstract}

Certain quantum algebras $\sA$ admit a semiclassical limit: that is, a
Poisson algebra structure on their $q \to 1$ limit $A$ which preserves
some noncommutative information from $\sA$. A simple example is the
quantum plane
\[\mathsf{A}= \frac{k(q)\langle
  \mathsf{x},\mathsf{y}\rangle}{(\mathsf{xy}-q\mathsf{yx})}\] 
whose semiclassical limit is the polynomial ring
$k[x,y]$ with Poisson structure determined by 
$ \{x,y\} = xy$. Various authors have investigated 
to what extent the Hochschild (co)homology of $\sA$ is determined by
the Poisson (co)homology of its semiclassical limit, for example  \cite{vdb,FT},
usually using spectral sequence methods, and a related situation where
there is a Poisson structure on an associated graded algebra is 
dealt with in \cite[Th\'eor\`eme, p.223]{Kassel}.  The famous Kontsevich
quantization theorem also guarantees a relationship between the
cohomology of a Poisson algebra and that of its canonical
quantization.

In this paper we study the link between Poisson and Hochschild
cohomology by showing that if $\sA$ is graded with a PBW basis of
polynomial type with a polynomial semiclassical limit $A$, the
enveloping algebra $\sA^e$ is a deformation of the Poisson enveloping
algebra $P(A)$ (Section
\ref{scl}).  The restriction to $A$ being polynomial allows us 
to describe $P(A)$ by generators and relations
(Lemma \ref{pea}): when the K\"ahler differentials of $A$ are not free
as an $A$-module, additional relations are required.

If $\sA$ is Koszul with quadratic dual $\sA^!$ the Hochschild
cohomology of $\sA$ is computed by a differential graded algebra whose
underlying algebra is $\sA\otimes \sA^!$. 
If $\sA$ has a PBW basis of polynomial type we show that this DGA is a
deformation of the DGA $A\otimes A^!$ that computes Poisson cohomology
of the semiclassical limit $A$ (Proposition \ref{defm_of_Kos_cocomp}).
This leads to a condition for  $\HH(\sA)$ to be a deformation of
$\HP(A)$; in particular when this holds the Hochschild and Poisson
cohomologies have the same  bigraded Hilbert series (Corollary
\ref{HH_eq_HP}).

Two algebras to which our results can be applied are the coordinate ring
$\sA(n)$ of quantum affine $n$-space and the algebra $\sM$ of
$2\times 2$ quantum matrices.  
We discuss the cohomology of the quantum plane and its semiclassical
limit in Section
\ref{sec:quantum_plane} as an example of our methods; Hochschild
cohomology for quantum affine spaces and Poisson cohomology of their
semiclassical limits are already known, for example \cite[\S6]{Wambst},
\cite{BGM}, \cite[Proposition 2.2.1]{LR}, \cite[\S3.3]{Sitarz}.

In Section \ref{sec:matrices} we show that the Hochschild cohomology for
$2\times 2$ quantum matrices is a $q$-deformation of
the Poisson cohomology of the semiclassical limit, and calculate this
Poisson cohomology explicitly in Theorem \ref{HPM}.  
These computations are new, although some low-dimensional Hochschild cohomology groups for quantum matrices
have appeared in the literature:  the zeroth
Hochschild cohomology group is known to be generated by the quantum determinant, 
and the structure of the first cohomology group as a module over the
centre was determined in \cite{LL}.
An interesting feature of these computations is the action of the
Poisson centre
on the Poisson cohomology groups, which acts freely except for a single
trivial summand in the top cohomological dimension.  This is in contrast
to \cite[Theorem 4.1]{vdb} for example, where the action is free.

\subsection{Notation and conventions} Throughout this article $q$ is
a transcendental
element over a field $k$ of characteristic zero. $k[q^{\pm 1}]$ is the
ring of Laurent polynomials in $q$ and $k(q)$ its field of
fractions.  ``Graded''  means $\mathbb{Z}_{\geq 0}$-graded unless
otherwise stated. The $n$th graded component of a graded algebra 
$\Lambda$ is denoted $\Lambda_n$, and if $\lambda \in \Lambda_n$ we 
write $|\lambda|=n$. 

\section{Poisson algebras and Poisson cohomology} \label{env_alg}

\subsection{The Poisson enveloping algebra}

Let $A$ be a Poisson $k$-algebra with bracket $\{-,-\}$; that is, $A$
is a commutative associative $k$-algebra with unit, $\{-,-\}$ is a Lie
bracket on $A$, and for any $a \in A$ the map \[ b \mapsto \{a,b\}\]
is a derivation.  A \textbf{left Poisson module} over $A$ is a
$k$-vector space $M$ which is a simultaneously a left module for $A$
as an associative algebra and a left module for the Lie algebra $(A,
\{-.-\})$, satisfying
\begin{align*}
\{x,y  m\}_M &= \{x,y\} m + y  \{x,m\}_M \\
\{xy,m\}_M &= x\{y,m\}_M + y \{x,m\}_M
\end{align*}
for any $x,y \in A$ and $m \in M$, where $\{a,m\}_M$ denotes the Lie
algebra action and $am$ the associative algebra action of $a \in A$ on
$m \in M$.

Let $\Omega(A)$ be the $A$-module of \textbf{K\"ahler differentials}
of $A$.  This is the free left $A$-module on generators $\Omega(a)$
for $a \in A$ quotiented by the submodule generated by all elements of
the form
\begin{equation} \label{k_diffs} \Omega(\alpha), \,\,\,\,\,\,
 \Omega(a+b)-\Omega(a)-\Omega(b), \,\,\,\,\,\,
 \Omega(ab)-a\Omega(b)-b\Omega(a) 
\end{equation}
for $\alpha\in k$ and $a,b \in A$.  If we define
\[ [ a\Omega(x),b\Omega(y) ] = a\{x,b\}\Omega(y) + b\{a,y\}\Omega(x) +
ab\Omega(\{x,y\}) \] then $\Omega(A)$ becomes a $k$-Lie algebra:
see \cite[Theorem 3.8]{Hueb}.

There is an associative algebra $U(A,\Omega(A))$ called the
\textbf{Poisson enveloping algebra} satisfying a universal property
such that the category of left $U(A,\Omega(A))$-modules is equivalent
to the category of left Poisson modules over $A$.  A more general
construction of which the Poisson enveloping algebra is a special case
given in \cite[\S1]{Hueb}.

In the rest of this section $A=k[x_1,\ldots,x_n]$ will be a polynomial
algebra so that the K\"ahler differentials are
freely generated as an $A$-module by $\Omega(x_1),\ldots,\Omega(x_n)$
\cite[\S8.8]{Weibel}. 
We will need a presentation of the Poisson enveloping algebra by
generators and relations in this special case.  To this end, let
$P(A)$ be the  $k$-algebra generated by $y_i$ and $\Omega(y_i)$ for $1 \leq i
\leq n$, subject to two sets of relations.  The first set says that
the $y_i$s commute:
\begin{equation}\label{rel1}
y_i y_j = y_j y_i \,\,\, 1 \leq i < j \leq n
\end{equation}
so there is an algebra homomorphism $\iota_A: A \to P(A)$ determined
by $x_i \mapsto y_i$.  The second set of relations is
\begin{gather} 
\label{rel2} \Omega(y_i) y_j - y_j \Omega(y_i) = \iota_A(\{x_i,x_j\})
\,\,\,\,\,\, 1 \leq i,j \leq n \\
\label{rel3} \Omega(y_i)\Omega(y_j)-\Omega(y_j)\Omega(y_i)=
\sum_k \iota_A\left(\frac{ \partial \{x_i,x_j\}}{\partial
    x_k}\right)\Omega(y_k)  \,\,\,\,\,\, 1\leq i < j \leq n.
\end{gather}
The map $\iota_A: A \to P(A)$ makes $P(A)$ into an
$A$-module.  Let $\iota_{\Omega(A)} : \Omega(A) \to P(A)$ be the map of $A$-modules such
that  $\Omega(x_i) \mapsto \Omega(y_i)$, and regard $P(A)$ as a $k$-Lie
algebra by defining $[u,v] = uv-vu$.

\begin{lem} $\iota_{\Omega(A)}$ is a homomorphism of $k$-Lie
  algebras. \end{lem}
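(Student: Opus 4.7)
The plan is to reduce everything to a calculation on the standard generators $\Omega(x_1),\ldots,\Omega(x_n)$ of $\Omega(A)$, using the $k$-bilinearity of both brackets. Since $A$ is a polynomial ring, $\Omega(A)$ is freely generated as an $A$-module by $\Omega(x_1),\ldots,\Omega(x_n)$, so any bracket in $\Omega(A)$ can be expressed via the defining formula in terms of brackets $[a\Omega(x_i),b\Omega(x_j)]$.

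The key technical lemma I need is that for every $b\in A$ and every $i$,
\[
[\Omega(y_i),\iota_A(b)] = \iota_A(\{x_i,b\})
\]
in $P(A)$, which extends relation \eqref{rel2} from generators to arbitrary polynomials. I would prove this by induction on the degree of $b$. The base case $b=x_j$ is exactly \eqref{rel2}. For the inductive step, I use that $[\Omega(y_i),-]$ is a derivation of $P(A)$, that $\iota_A$ is an algebra map (using \eqref{rel1} to see that $\iota_A(A)$ is commutative), and that $\{x_i,-\}$ is a derivation of $A$; these combine to give the Leibniz step $[\Omega(y_i),\iota_A(b_1b_2)] = \iota_A(\{x_i,b_1b_2\})$ from the corresponding identities for $b_1$ and $b_2$. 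I expect this to be the main obstacle, because one must verify that the derivation-matching argument goes through even though $P(A)$ is noncommutative — the identity pushes the answer into the commutative subalgebra $\iota_A(A)$, which is what keeps everything consistent.

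With that in hand I would verify the bracket identity in two steps. First, for $a=b=1$, the definition of the bracket on $\Omega(A)$ gives $[\Omega(x_i),\Omega(x_j)] = \Omega(\{x_i,x_j\})$, and applying $\iota_{\Omega(A)}$ yields $\sum_k \iota_A\bigl(\tfrac{\partial\{x_i,x_j\}}{\partial x_k}\bigr)\Omega(y_k)$ by the $A$-linearity of $\iota_{\Omega(A)}$ and the Leibniz expansion $\Omega(c) = \sum_k\tfrac{\partial c}{\partial x_k}\Omega(x_k)$. This matches the commutator $[\Omega(y_i),\Omega(y_j)]$ in $P(A)$ by relation \eqref{rel3}.

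Second, for general $a,b\in A$, I would expand
\[
[\iota_A(a)\Omega(y_i),\iota_A(b)\Omega(y_j)] = \iota_A(a)\Omega(y_i)\iota_A(b)\Omega(y_j) - \iota_A(b)\Omega(y_j)\iota_A(a)\Omega(y_i)
\]
in $P(A)$ by moving the $\iota_A$ factors past the $\Omega(y_k)$ factors using the key lemma. The resulting three terms — an $\iota_A(ab)[\Omega(y_i),\Omega(y_j)]$ contribution together with $\iota_A(a\{x_i,b\})\Omega(y_j)$ and $-\iota_A(b\{x_j,a\})\Omega(y_i) = \iota_A(b\{a,x_j\})\Omega(y_i)$ — agree term by term with the image under $\iota_{\Omega(A)}$ of $a\{x_i,b\}\Omega(x_j) + b\{a,x_j\}\Omega(x_i) + ab\,\Omega(\{x_i,x_j\})$, once the first step is invoked on $[\Omega(y_i),\Omega(y_j)]$. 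Extending by $k$-bilinearity completes the proof.
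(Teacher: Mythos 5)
Your proposal is correct and follows essentially the same route as the paper: the key identity $[\Omega(y_i),\iota_A(b)]=\iota_A(\{x_i,b\})$ is exactly the paper's intermediate step (proved there too by induction on monomial length via the Leibniz property of the commutator and relation \eqref{rel2}), and the final verification by expanding $[\iota_A(a)\Omega(y_i),\iota_A(b)\Omega(y_j)]$, collecting the $\iota_A(ab)[\Omega(y_i),\Omega(y_j)]$ term and invoking \eqref{rel3}, matches the paper's computation term for term.
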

\begin{proof}
Firstly note that
\begin{equation}
\Omega(y_i) \iota_A(b) - \iota_A(b) \Omega(y_i) =
  \iota_A(\{x_i,b\})
  \label{identity}
\end{equation} for any $b \in A$.  When $b$ is a monomial this
can be proved by induction on the length using (\ref{rel2}), and the
result extends to arbitrary $b \in A$ by linearity.

To prove that $\iota_{\Omega(A)}$ is a Lie algebra homomorphism we
must show that for any $a,b \in A$ and $1 \leq i,j \leq n$,
\begin{multline*} \iota_{\Omega(A)} ([a\Omega(x_i), b\Omega(x_j)])=
  \iota_A(a\{x_i,b\})\Omega(y_j) + \iota_A(b\{a,x_j\})\Omega(y_i) \\+
  \iota_A(ab)\sum_k \iota_A\left(\frac{ \partial \{x_i,x_j\}}{\partial x_k}\right)\Omega(y_k)
\end{multline*}
is equal to 
\begin{multline*} \iota_{\Omega(A)}(a\Omega(x_i))
  \iota_{\Omega(A)}(b\Omega(x_j)) - \iota_{\Omega(A)}(b\Omega(x_j))
  \iota_{\Omega(A)}(a\Omega(x_i)) =\\
  \iota_A(a)\Omega(y_i)\iota_A(b)\Omega(y_j)-
  \iota_A(b)\Omega(y_j)\iota_A(a)\Omega(y_i) \end{multline*} 
for any $a,b \in A$ and any $i,j$.

Using (\ref{identity}) gives
\begin{multline*}
  \iota_A(a)\Omega(y_i)\iota_A(b)\Omega(y_j)-
  \iota_A(b)\Omega(y_j)\iota_A(a)\Omega(y_i)  \\
= \iota_A(a)(\iota_A(b)\Omega(y_i) + \iota_A(\{x_i,b\}))\Omega(y_j)-
\iota_A(b)(\iota_A(a)\Omega(y_j) + \iota_A(\{x_j,a\}))\Omega(y_i) \\
= \iota_A(ab)(\Omega(y_i)\Omega(y_j)-\Omega(y_j)\Omega(y_i))
+\iota_A(a\{x_i,b\})\Omega(y_j) + \iota_A(b\{a,x_j\})\Omega(y_i)
\end{multline*}
and the result follows from (\ref{rel3})
\end{proof}

\begin{lem} \label{pea} The Poisson enveloping
  algebra $U(A, \Omega(A))$ is isomorphic to $P(A)$.
\end{lem}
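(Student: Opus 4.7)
The plan is to show that $P(A)$ satisfies the universal property characterizing $U(A,\Omega(A))$. Following Huebschmann's construction \cite[\S1]{Hueb}, $U(A,\Omega(A))$ is universal among associative $k$-algebras $B$ equipped with an algebra homomorphism $\alpha: A \to B$ together with a map $\beta: \Omega(A) \to B$ which is $A$-linear via $\alpha$ and a homomorphism of $k$-Lie algebras, satisfying the compatibility $[\beta(\omega), \alpha(a)] = \alpha(\omega\cdot a)$ for all $\omega \in \Omega(A)$ and $a \in A$, where $\omega\cdot a$ denotes the anchor action of $\Omega(A)$ on $A$ by derivations (so that $(c\Omega(b))\cdot a = c\{b,a\}$).

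First I would check that the pair $(\iota_A, \iota_{\Omega(A)})$ satisfies these hypotheses, yielding an algebra homomorphism $\phi: U(A,\Omega(A)) \to P(A)$. The $A$-linearity of $\iota_{\Omega(A)}$ is part of its definition, the Lie homomorphism property is the preceding lemma, and the compatibility $[\iota_{\Omega(A)}(\Omega(x_i)),\iota_A(b)]=\iota_A(\{x_i,b\})$ is precisely identity (\ref{identity}); the case of a general $\omega=a\Omega(x_i)$ then follows by multiplying by $\iota_A(a)$ on the left.

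For the reverse direction, let $\bar\iota_A$ and $\bar\iota_{\Omega(A)}$ denote the canonical structure maps into $U(A,\Omega(A))$, and define $\psi: P(A) \to U(A,\Omega(A))$ on generators by $y_i \mapsto \bar\iota_A(x_i)$ and $\Omega(y_i) \mapsto \bar\iota_{\Omega(A)}(\Omega(x_i))$. I must verify that the defining relations (\ref{rel1})--(\ref{rel3}) are satisfied in $U(A,\Omega(A))$. Relations (\ref{rel1}) hold because $\bar\iota_A$ is an algebra homomorphism from a commutative ring; (\ref{rel2}) follows at once from the universal compatibility with $\omega = \Omega(x_i)$ and $a = x_j$. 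For (\ref{rel3}) I would use the Lie homomorphism property of $\bar\iota_{\Omega(A)}$ together with the bracket formula recalled in the excerpt, which gives $[\Omega(x_i),\Omega(x_j)] = \Omega(\{x_i,x_j\})$ inside $\Omega(A)$; expanding using the Leibniz rule for $\Omega$ on the polynomial $\{x_i,x_j\} \in A$ yields $\sum_k \frac{\partial \{x_i,x_j\}}{\partial x_k} \Omega(x_k)$, and applying $\bar\iota_{\Omega(A)}$ delivers (\ref{rel3}) after identifying the $A$-action via $\bar\iota_A$.

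Finally, $\phi$ and $\psi$ exchange the two generating sets, so they are mutually inverse on generators and hence as algebra homomorphisms. The main piece of bookkeeping is translating Huebschmann's universal property into the concrete generators-and-relations setting; because $\Omega(A)$ is $A$-free on $\Omega(x_1),\ldots,\Omega(x_n)$, every verification reduces to a check on these generators using $A$-linearity and the derivation rule for $\Omega$, so I do not anticipate a genuine obstacle beyond this.
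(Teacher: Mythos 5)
Your proof is correct and follows essentially the same route as the paper: the paper verifies directly that $(P(A),\iota_A,\iota_{\Omega(A)})$ satisfies Huebschmann's universal property (checking relations (\ref{rel1})--(\ref{rel3}) for an arbitrary target $B$), whereas you package the identical relation-checks as a pair of mutually inverse maps $U(A,\Omega(A))\leftrightarrow P(A)$. The ingredients — the preceding lemma for the Lie homomorphism property, identity (\ref{identity}) for the compatibility, and the Leibniz expansion of $\Omega(\{x_i,x_j\})$ for (\ref{rel3}) — coincide with the paper's.
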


\begin{proof}
  We will verify that $(P(A), \iota_A, \iota_{\Omega(A)})$ has the
  universal property of \cite[1.6]{Hueb}.  We must show that given an
  associative 
  $k$-algebra $B$ equipped with the Lie bracket
  $[b_1,b_2]=b_1b_2-b_2b_1$, a morphism of $k$-Lie algebras
  $\phi_{\Omega(A)}: \Omega(A) \to B$ and a morphism of $k$-algebras
  $\phi_A : A \to B$ such that
\begin{flalign}
\label{univ1} \phi_A(a) \phi_{\Omega(A)}(b\Omega(x_i)) & = \phi_{\Omega(A)}(ab\Omega(x_i))
\\
\label{univ2} \phi_{\Omega(A)}(b\Omega(x_i))\phi_A(a) -
\phi_A(a)\phi_{\Omega(A)}(b\Omega(x_i)) &= \phi_A(b\{x_i,a\})
\end{flalign}
for any $a,b \in A$ and any $i$, there is a unique homomorphism $\Phi
: P(A) \to B$ of $k$-algebras such that $\Phi \circ \iota_A = \phi_A$
and $\Phi \circ \iota_{\Omega(A)} = \phi_{\Omega(A)}$.

If these are to hold, $\Phi$ must satisfy $\Phi(y_i) = \phi_A(x_i)$
and $\Phi(\Omega(y_i))= \phi_{\Omega(A)} (\Omega(x_i))$. Since the elements
$y_i$ and $\Omega(y_i)$ generate $P(A)$, if such a $\Phi$ exists it is
unique.

In order to show that such a $\Phi$ exists we need only show that the
elements $\phi_A(x_i)$ and $\phi_{\Omega(A)} (\Omega(x_i))$ satisfy
the relations
(\ref{rel1}), (\ref{rel2}) and (\ref{rel3}) of $P(A)$.   Certainly the
$\phi_A(x_i)$ commute since $\phi_A : A \to B$ is an algebra
homomorphism, so (\ref{rel1}) is satisfied.  Next,
\[ \phi_{\Omega(A)} (\Omega(x_i)) \phi_A(x_j) -\phi_A(x_j)\phi_{\Omega(A)}
(\Omega(x_i)) = \phi_A(\{ x_i, x_j\})
\]
by (\ref{univ2}),  % Since
and therefore (\ref{rel2}) is satisfied.  Finally, 
\begin{gather*} \phi_{\Omega(A)} (\Omega(x_i))\phi_{\Omega(A)} (\Omega(x_j))-
\phi_{\Omega(A)} (\Omega(x_j))\phi_{\Omega(A)} (\Omega(x_i)) \\=
\phi_{\Omega(A)} ([\Omega(x_i),\Omega(x_j)]) = \phi_{\Omega(A)}(\Omega(\{x_i,x_j\}) \end{gather*}
because $\phi_{\Omega(A)}$ is a homomorphism of Lie algebras, and
applying $\phi_{\Omega(A)}$ to 
\[ \Omega(\{x_i,x_j\}) = \sum_k \frac{ \partial \{x_i,x_j\}}{\partial x_k}\Omega(x_k)
 \]
and using (\ref{univ1}) gives
\[\phi_{\Omega(A)} (\Omega(\{x_i,x_j\})) = \sum_k
\phi_A\left( \frac{ \partial \{x_i,x_j\}}{\partial x_k}  \right) \phi_{\Omega(A)} (\Omega(x_k))\]
and so (\ref{rel3}) is also satisfied.
\end{proof}

\begin{cor} \label{poisson_pbw} $P(A)$ has a PBW basis consisting of all elements of the form 
\[ y_1^{a_1}\cdots y_n^{a_n} \Omega(y_1)^{b_1}\cdots
\Omega(y_n)^{b_n} \] for $a_i, b_i \geq 0$. \end{cor}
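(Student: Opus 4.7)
The approach is to combine a spanning argument based directly on the defining relations with an appeal to Rinehart's PBW theorem for Lie-Rinehart enveloping algebras, which applies once Lemma \ref{pea} is in hand.

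For spanning, I would filter $P(A)$ by $\Omega$-degree, i.e.\ the total degree in the generators $\Omega(y_1), \ldots, \Omega(y_n)$. Relation (\ref{rel1}) lets the $y_i$'s be reordered while preserving $\Omega$-degree. Relation (\ref{rel2}) rewrites $\Omega(y_i) y_j$ as $y_j \Omega(y_i)$ plus a correction lying in $\iota_A(A)$, i.e.\ of strictly smaller $\Omega$-degree. Relation (\ref{rel3}) rewrites $\Omega(y_i)\Omega(y_j)$ for $i<j$ as $\Omega(y_j)\Omega(y_i)$ is not in standard form, so one actually rearranges using the relation directly; in any case, the commutator $[\Omega(y_i),\Omega(y_j)]$ lies in $\iota_A(A) \cdot \Omega(A)$, which has strictly smaller $\Omega$-degree than $\Omega(y_i)\Omega(y_j)$. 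A routine induction, first on $\Omega$-degree and then on the length of a word, shows that every element of $P(A)$ can be written as a linear combination of monomials of the claimed form.

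For linear independence, Lemma \ref{pea} identifies $P(A)$ with the enveloping algebra $U(A, \Omega(A))$ of the Lie-Rinehart algebra $(A, \Omega(A))$. Since $A = k[x_1,\ldots,x_n]$ is polynomial, $\Omega(A)$ is free of rank $n$ as an $A$-module, so Rinehart's PBW theorem (as recovered in the setup of \cite{Hueb}) yields an isomorphism of graded $A$-algebras $S_A(\Omega(A)) \cong \operatorname{gr} P(A)$, the filtration being the one by $\Omega$-degree described above. As $S_A(\Omega(A))$ is a polynomial ring on $\Omega(y_1),\ldots,\Omega(y_n)$ over $A = k[y_1,\ldots,y_n]$, its standard monomial basis lifts to the claimed PBW basis of $P(A)$.

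The main obstacle is linear independence: the spanning step is a mechanical reduction using the three relations, but without citing Rinehart's theorem one is left to run a Diamond Lemma argument on the reduction rules derived from (\ref{rel1})--(\ref{rel3}), whose overlap ambiguities reduce, after some calculation, to the Jacobi identity for the Lie-Rinehart bracket on $\Omega(A)$ - feasible but considerably more work than invoking the PBW theorem directly.
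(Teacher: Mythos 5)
Your argument is correct and is essentially the paper's: the paper proves this corollary purely by citing Rinehart's theorem (via \cite[Theorem 1.9]{Hueb}) that the canonical map from the symmetric $A$-algebra on $\Omega(A)$ to the Poisson enveloping algebra is a vector space isomorphism, which, since $\Omega(A)$ is free over $A=k[x_1,\ldots,x_n]$, immediately yields the stated basis. Your explicit spanning reduction and the remarks about a Diamond Lemma alternative are extra detail beyond what the paper records, but the substance of the argument is the same.
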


\begin{proof}
  This is a consequence of a result of Rinehart \cite[Theorem 1.9]{Hueb}
  which shows that
  the Poisson enveloping algebra is isomorphic as a vector space to
  the symmetric $A$-algebra on $\Omega(A)$ via the canonical map.  
\end{proof}

\subsection{Poisson cohomology}\label{subsec:poisson_cohomology}
In this section $A=k[x_1,\ldots,x_n]$ is once again a polynomial
algebra so that  $\Omega(A)$ is a free $A$-module, and we may consider its
$m$th $A$-exterior power $\Lambda_A^m (\Omega(A))$. This is the
quotient of the tensor product over $A$ of $m$ copies of $\Omega(A)$
by the submodule generated by all %pure
tensors with two equal
factors.  Write 
\[ \Omega(a_1) \wedge \cdots \wedge  \Omega(a_m) \]
for the image of the pure tensor
\[ \Omega(a_1) \otimes_A \cdots \otimes _A \Omega(a_m)\]
in $\bigwedge_A^m (\Omega(A))$.  As an $A$-module, $\bigwedge_A^m
(\Omega(A))$ is free on all elements of the form
\[ \Omega(x_{i_1}) \wedge \cdots \wedge  \Omega(x_{i_m}) \]
for $1 \leq i_1 < i_2 < \cdots < i_m \leq n$, so it is isomorphic to
$A \otimes _k \Lambda^m(V)$ where $\Lambda^m(V)$ is the $m$th exterior
power of the vector space $V$ spanned by the $\Omega(x_i)$.

\begin{defn} $\operatorname{Alt}_A^m(\Omega(A),A)$ is
$\Hom_A(\bigwedge_A^m (\Omega(A)), A)$ \end{defn}
This is what Huebschmann calls
the space of $A$-multilinear alternating functions from $\Omega(A)$ to $A$.
The direct sum $\operatorname{Alt}_A^*(\Omega(A),A)=\bigoplus_{m\geq 0}
\operatorname{Alt}_A^m(\Omega(A),A)$ is a differential graded algebra
when equipped with the  Cartan-Chevalley-Eilenberg
differential
\begin{multline*}   df(\Omega(x_{i_1}) \wedge \cdots \wedge
  \Omega(x_{i_m})) = \\  \sum_{j\geq 1}
(-1)^{j+1} \Omega(x_{i_j}) f( \Omega(x_{i_1}) \wedge \cdots \wedge
\widehat{\Omega(x_{i_j})} \wedge \cdots \wedge \Omega(x_{i_m})) \\ 
\mbox{} +
\sum_{1 \leq j<k \leq m }(-1)^{j+k} f([\Omega(x_{i_j}), \Omega(x_{i_k})]
\wedge \Omega(x_{i_1}) \wedge \cdots \wedge
\widehat{\Omega(x_{i_j})} \wedge \cdots \wedge
\widehat{\Omega(x_{i_k})} \wedge \cdots \wedge \Omega(x_{i_m})
)\end{multline*}
and the shuffle product
\begin{multline*} (f \wedge g) (\Omega(a_1)\wedge \cdots \wedge
  \Omega(a_{|f|+|g|})) = \\
  \sum_{\mathbf{i}} \operatorname{sgn}(\mathbf{i})
  f(\Omega(a_{i_1})\wedge \cdots \wedge \Omega(a_{i_{|f|}}) )
  g(\Omega(a_{i_{|f|+1}})\wedge \cdots \wedge \Omega(a_{i_{|f|+|g|}})
  ) \end{multline*} where $f \in \Alt_A^{|f|}(\Omega(A),A)$, $g \in
\Alt_A^{|g|}(\Omega(A),A)$, the sum is over all $\mathbf{i} =
(i_1,\ldots,i_{|f|+|g|})$ such that $i_1<\cdots<i_{|f|}$ and
$i_{|f|+1}< \cdots <i_{|f|+|g|}$, and $\operatorname{sgn}(\mathbf{i})$
is the sign of the permutation $r \mapsto i_r$.  The shuffle product
on the space of alternating forms is obtained by transferring the
natural multiplication on $A \otimes_k \Lambda^*(V^*)$ through the
isomorphisms
\begin{equation*}
\Alt_A(\Omega(A),A) \cong \Hom_A(A\otimes
\Lambda^*(V),A) \cong \Hom_k(\Lambda^*(V),A) \cong A\otimes_k
\Lambda^*(V^*). \end{equation*}

\begin{defn} The \textbf{Poisson cohomology} $\HP^*(A)$ of a Poisson
  algebra $A$ is the cohomology of the differential graded algebra
  $\operatorname{Alt}_A^*(\Omega(A),A)$.  \end{defn} If $A$ is a
graded algebra and its Poisson bracket respects the grading then the
Poisson cohomology groups are bigraded; in this case we write
$\HP^{ij}(A)$ for the part in homological degree $i$ and internal
degree $j$.

When the K\"ahler differentials are projective as an $A$-module, so in
particular when $A$ is polynomial,  $P(A) \otimes_A \Lambda
^*_A(\Omega(A))$ is a projective resolution of $A$ as
a $P(A)$-module and the Poisson cohomology of $A$ is
isomorphic as an algebra to $\ext_{P(A)}^*(A,A)$ \cite[p.81]{Hueb}.

\section{The semiclassical limit and $q$-deformations} \label{scl}
Let $\mathcal{A}$ be a 
$\kq$-algebra which is a
torsion-free $\kq$-module and suppose $A =
\mathcal{A}/(q-1)\mathcal{A}$ is commutative.  If $u \in \cA$ write
$\bar{u}$ for its image in $A$.  Then $A$ is a Poisson
algebra with bracket
\[ \{\bar{a},\bar{b} \} := \overline {\beta(a,b) }\]
where $\beta(a,b)$ is the unique element of
$\mathcal{A}$ such that $ ab-ba = (q-1)\beta(a,b)$.
\begin{defn} \label{def_scl} With $\cA$, $\{-,-\}$ and $A$ as above we say that  $A$
  is the semiclassical limit of $\mathcal{A}$. \end{defn}
See for example \cite{Dumas, Goodearl}.

\begin{defn} A $\kq$-subalgebra $\mathcal{R}$ of a $k(q)$-algebra
  $\sR$ is called a $\kq$-form of
  $\sR$ if the natural map $\mathcal{R}\otimes_\kq k(q) \to \sR$ is an
  isomorphism.
We say
  $\sR$ is a $q$-deformation
  of the $k$-algebra $R$ via the $\kq$-form $\mathcal{R}$  if
 $\mathcal{R}
   /(q-1)\mathcal{R}$ is isomorphic as a $k$-algebra to  $R$.  \end{defn}

 We will use an analogous notion of $q$-deformation for differential
 graded algebras, obtained by replacing `algebra' and `subalgebra' by
 `DG-algebra' and `sub-DG-algebra' everywhere in the above
 definition. Note that if $\sR$ is (bi)graded then so is $R$, and they
 have the same Hilbert series.

 Suppose $\cA$ is a $\kq$-form of a $k(q)$-algebra $\sA$ and that
 $\cA$ has a semiclassical limit $A$.  We want to relate the Poisson
 enveloping algebra $P(A)$ and the enveloping algebra $\sA^e = \sA
 \otimes_{k(q)} \sA^{\opp}$ of $\sA$ using the notion of
 $q$-deformation.  The subalgebra of $\sA^e$ generated by
 $\mathcal{A}\otimes 1$ and $1\otimes \mathcal{A}^{\opp}$ is not
 suitable as a $k[q^{\pm 1}]$-form because it is commutative modulo $q-1$,
 and $P(A)$ is in general noncommutative.  To help us define a
 suitable $\kq$-form we need some special elements of $\sA^e$.

\begin{defn} Let $\sa \in \sA$.  Then $\tilde\Omega(\sa)$ is
\[ \frac{\sa\otimes 1 - 1 \otimes \sa}{q-1} \in \sA^e. \]
\end{defn}

\begin{lem} \label{modified_rels}
Let $x,y \in \mathcal{A}$.  Write $\sx$ and $\sy$ for the elements
$x\otimes 1$ and $y \otimes 1$ of $\sA^e$.  Then in
  $\sA^e$, 
  \begin{align}
\label{q_deriv} \tilde \Omega(xy) & = \sx \tilde\Omega(y) + \sy
\tilde\Omega(x) + (q-1)\tilde\Omega(y)\tilde\Omega(x) \\
\label{rrel2} \tilde\Omega(x)\sy - \sy\tilde\Omega(x) &= \beta(x,y)\otimes 1 \\
\label{rrel3} \tilde\Omega(x)
\tilde\Omega(y)-\tilde\Omega(x)\tilde\Omega(y) & =
\tilde\Omega(\beta(x,y)). \end{align}
\end{lem}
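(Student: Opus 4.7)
The plan is to verify all three identities by direct expansion of the definition $\tilde\Omega(a) = (a\otimes 1 - 1\otimes a)/(q-1)$, exploiting the standard structural fact that inside $\sA^e$ the subalgebras $\sA\otimes 1$ and $1\otimes \sA^{\opp}$ commute elementwise: $(a\otimes 1)(1\otimes b) = a\otimes b = (1\otimes b)(a\otimes 1)$. No deeper lemma is needed; the argument is entirely elementary manipulation in the tensor product, and the only real point of care is the convention that the multiplication on the second factor is opposite, so that $(1\otimes a)(1\otimes b) = 1\otimes ba$.

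For identity (\ref{rrel2}), I would simply compute
\[
\tilde\Omega(x)\sy = \frac{xy\otimes 1 - y\otimes x}{q-1},\qquad \sy\tilde\Omega(x) = \frac{yx\otimes 1 - y\otimes x}{q-1};
\]
the $y\otimes x$ terms cancel in the difference, leaving $((xy - yx)\otimes 1)/(q-1) = \beta(x,y)\otimes 1$ directly from the definition of $\beta$. For (\ref{rrel3}), a parallel expansion of $\tilde\Omega(x)\tilde\Omega(y)$ and $\tilde\Omega(y)\tilde\Omega(x)$ shows that, thanks once more to the commutation of the two embeddings, the cross terms of the shape $x\otimes y$ and $y\otimes x$ cancel in the difference; what survives is
\[
\frac{(xy - yx)\otimes 1 - 1\otimes(xy - yx)}{(q-1)^2},
\]
and one factor of $q-1$ cancels via the definition of $\beta$, producing exactly $\tilde\Omega(\beta(x,y))$.

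For the Leibniz-type identity (\ref{q_deriv}), the key step is the telescoping decomposition
\[
xy\otimes 1 - 1\otimes xy = (x\otimes 1)(y\otimes 1 - 1\otimes y) + (1\otimes y)(x\otimes 1 - 1\otimes x),
\]
which is valid because $\sA\otimes 1$ commutes with $1\otimes\sA^{\opp}$. Dividing by $q-1$ gives $\tilde\Omega(xy) = \sx\tilde\Omega(y) + (1\otimes y)\tilde\Omega(x)$, and then rewriting $1\otimes y$ as $\sy - (q-1)\tilde\Omega(y)$ produces the quadratic correction term of the stated form.

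Nothing here is deep in a structural sense; the only mild obstacle is sign and ordering bookkeeping in $\sA^e$, and in particular remembering that the multiplication within $1\otimes\sA^{\opp}$ is opposite to that of $\sA$, so that products of elements of the two embeddings must be reduced carefully before the $(q-1)^{\pm 1}$ factors can be combined.
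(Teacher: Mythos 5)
Your approach is the right one and is exactly what the paper intends: the paper gives no argument beyond ``the proof is a simple computation,'' and your direct expansion using $(a\otimes 1)(1\otimes b)=a\otimes b=(1\otimes b)(a\otimes 1)$ and $(1\otimes a)(1\otimes b)=1\otimes ba$ is that computation. Your verifications of (\ref{rrel2}) and (\ref{rrel3}) are correct as written (note the left side of (\ref{rrel3}) as printed is literally zero; you have rightly read it as the commutator $\tilde\Omega(x)\tilde\Omega(y)-\tilde\Omega(y)\tilde\Omega(x)$).

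There is one point you gloss over, and it matters. Carrying out your final step for (\ref{q_deriv}) honestly, substituting $1\otimes y=\sy-(q-1)\tilde\Omega(y)$ into $\tilde\Omega(xy)=\sx\tilde\Omega(y)+(1\otimes y)\tilde\Omega(x)$ gives
\[ \tilde\Omega(xy)=\sx\tilde\Omega(y)+\sy\tilde\Omega(x)-(q-1)\tilde\Omega(y)\tilde\Omega(x), \]
with a \emph{minus} sign on the quadratic correction, not the plus sign in the statement; your claim that this ``produces the quadratic correction term of the stated form'' is therefore not accurate as written. The discrepancy is in the paper, not in your computation: the sign you obtain is the one consistent with the rest of the paper (for instance, combining (\ref{rrel3}) with the minus-sign version of (\ref{q_deriv}) yields exactly the relation $q\tilde\Omega(\sx)\tilde\Omega(\sy)-\tilde\Omega(\sy)\tilde\Omega(\sx)=\sx\tilde\Omega(\sy)+\sy\tilde\Omega(\sx)$ listed for the quantum plane, and likewise the quantum-matrix relations). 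You should state explicitly that your computation corrects the sign in (\ref{q_deriv}) rather than asserting agreement with the printed formula.
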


The proof is a simple computation.  This lemma shows that the $\tilde\Omega(\sa)$ behave like
$q$-analogues of the K\"ahler differentials of $A$ --- compare
(\ref{q_deriv}) with (\ref{k_diffs}) and (\ref{rrel2}), (\ref{rrel3})
with (\ref{rel2}), (\ref{rel3}). 

\begin{rem}
If $\sA$ is a quadratic algebra generated by homogeneous
elements $\sx_i$ of degree one with a basis of the form
$\sx_1^{a_1}\cdots \sx_n^{a_n}$ for $a_i \geq 0$, these relations,
together with the relations of $\sA$, are enough to give a
presentation of $\sA^e$.  This follows by counting the dimension of
the spaces of relations. 
\end{rem}

For the rest of this section we assume that $\sA$ is graded, that
$\sx_1,\ldots,\sx_n$ is a homogeneous generating set for $\sA$, and
that $A$ is polynomial on the images $\overline{\sx_i}$ of the
$\sx_i$.  Let $y_i$ be the generator of $P(A)$ corresponding to
$\overline{\sx_i}$; then $P(A)$ is graded by putting $y_i$ in the same
degree as $\sx_i$.

\begin{lem} \label{surject}  Let $\cA'$ be the $\kq$-subalgebra of
  $\sA^e$ generated by $\sx_i\otimes 1$ and $\tilde\Omega(\sx_i)$ for
  $1 \leq i \leq n$.   Then there is a surjection of
  graded algebras $P(A) \twoheadrightarrow \mathcal{A}'/(q-1)\cA'$ defined by $y_i \mapsto
  \overline{\sx_i \otimes 1}$ and $\Omega(y_i) \mapsto
\overline{\tilde\Omega(\sx_i)}$.
\end{lem}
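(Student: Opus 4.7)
The plan is to invoke the presentation of $P(A)$ established in Lemma \ref{pea} and check that the proposed assignments $y_i\mapsto \overline{\sx_i\otimes 1}$ and $\Omega(y_i)\mapsto \overline{\tilde\Omega(\sx_i)}$ preserve the three defining relations (\ref{rel1}), (\ref{rel2}), (\ref{rel3}). Once this is verified, the map extends to an algebra homomorphism $\phi\colon P(A)\to \cA'/(q-1)\cA'$ that is automatically surjective, since $\cA'$ is generated as a $\kq$-algebra by the $\sx_i\otimes 1$ and $\tilde\Omega(\sx_i)$, so the quotient is generated as a $k$-algebra by their images. As $q$ has degree zero in $\sA^e$, both $\sx_i\otimes 1$ and $\tilde\Omega(\sx_i)$ sit in degree $|\sx_i|$, matching the degrees of $y_i$ and $\Omega(y_i)$, so the map respects the grading.

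Relations (\ref{rel1}) and (\ref{rel2}) are straightforward. Commutativity of the semiclassical limit gives $\sx_i\sx_j - \sx_j\sx_i = (q-1)\beta(\sx_i,\sx_j)$; since $\beta(\sx_i,\sx_j)$ is a polynomial in the $\sx_k$, the element $\beta(\sx_i,\sx_j)\otimes 1$ lies in $\cA'$, and so $(\sx_i\otimes 1)(\sx_j\otimes 1)-(\sx_j\otimes 1)(\sx_i\otimes 1) \in (q-1)\cA'$, proving (\ref{rel1}). For (\ref{rel2}), relation (\ref{rrel2}) gives $\tilde\Omega(\sx_i)(\sx_j\otimes 1) - (\sx_j\otimes 1)\tilde\Omega(\sx_i) = \beta(\sx_i,\sx_j)\otimes 1$; reducing modulo $q-1$ and re-expressing $\beta(\sx_i,\sx_j)\otimes 1$ as the corresponding polynomial in the now-commuting elements $\overline{\sx_k\otimes 1}$ identifies it with $\phi(\iota_A(\{x_i,x_j\}))$, since by definition of the semiclassical Poisson bracket $\overline{\beta(\sx_i,\sx_j)} = \{x_i,x_j\}$.

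The main obstacle is relation (\ref{rel3}). By (\ref{rrel3}) the verification reduces to identifying $\overline{\tilde\Omega(\beta(\sx_i,\sx_j))}$ with the image under $\phi$ of $\sum_k\iota_A(\partial\{x_i,x_j\}/\partial x_k)\,\Omega(y_k)$. The crucial step is that modulo $(q-1)\cA'$ the operation $\tilde\Omega$ satisfies the ordinary Leibniz rule
\[ \tilde\Omega(uv) \equiv (u\otimes 1)\tilde\Omega(v) + (v\otimes 1)\tilde\Omega(u) \pmod{(q-1)\cA'}, \]
which follows directly from (\ref{q_deriv}) because the correction $(q-1)\tilde\Omega(v)\tilde\Omega(u)$ lies in $(q-1)\cA'$. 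Iterating with additivity, for any $f\in\cA$ expressed as a polynomial in the $\sx_k$ we obtain
\[ \overline{\tilde\Omega(f)} = \sum_k \phi\bigl(\iota_A(\partial\bar f/\partial x_k)\bigr)\cdot\overline{\tilde\Omega(\sx_k)} \]
in $\cA'/(q-1)\cA'$. Applying this with $f = \beta(\sx_i,\sx_j)$ and $\bar f = \{x_i,x_j\}$ gives precisely the right-hand side of (\ref{rel3}) under $\phi$, completing the verification.
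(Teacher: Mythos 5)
Your proposal is correct and follows essentially the same route as the paper: verify the three defining relations of $P(A)$ modulo $(q-1)\cA'$, with (\ref{rel1}) coming from commutativity of the semiclassical limit, (\ref{rel2}) from (\ref{rrel2}), and (\ref{rel3}) from (\ref{rrel3}) together with the mod-$(q-1)$ Leibniz rule obtained by iterating (\ref{q_deriv}) (the paper phrases this as an induction on the length of a monomial, extended by linearity). Your write-up just makes the intermediate step through $\overline{\tilde\Omega(\beta(\sx_i,\sx_j))}$ more explicit.
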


\begin{proof}
We must show that if we substitute $\sx_i \otimes 1$
for $y_i$ and $\tilde\Omega(\sx_i)$ for $\Omega(y_i)$
in the defining relations   (\ref{rel1}), (\ref{rel2}), (\ref{rel3})
of $P(A)$, the resulting expressions lie in  $(q-1)\mathcal{A}'$.  This is true of  the relations (\ref{rel1})
because the semiclassical limit exists.

\noindent \textbf{(\ref{rel2}):} we need
\[ \tilde\Omega(\sx_i)(\sx_j \otimes 1) -
(\sx_j \otimes 1){\tilde\Omega(\sx_i)} =
{\beta(\sx_i,\sx_j)\otimes 1}  \mod (q-1)\]
but this is immediate from (\ref{rrel2}).

\noindent \textbf{(\ref{rel3}):} Suppose first that
$\beta(\sx_i,\sx_j) = x_{i_1}\cdots x_{i_N}$ is a monomial.  We need
\[
{\tilde\Omega(\sx_i)}{\tilde\Omega(\sx_j)}-{\tilde\Omega(\sx_j)}{\tilde\Omega(\sx_i)}
= \sum_j( {\sx_{i_1}\cdots \widehat{\sx_{i_j}} \cdots
  \sx_{i_N}\otimes 1}){\tilde\Omega(\sx_{i_j})} \mod (q-1)\]
but this follows easily by induction on $N$ using (\ref{q_deriv}).  The general case, when $\beta(\sx_i,\sx_j)$ is not
assumed to be a monomial, follows by linearity.
\end{proof}

\begin{lem}\label{annoy}
Let $\sz_1,\ldots,\sz_n$ be a homogeneous generating set for a graded $k(q)$-algebra
$\sB$ and let $\mathcal{B}$ be the $k[q^{ \pm 1 }]$-subalgebra of $\sB$
generated by $\sz_1,\ldots,\sz_n$.  Then $\mathcal{B}\otimes_\kq k(q)
\cong \sB$ as graded algebras.
\end{lem}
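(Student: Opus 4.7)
The plan is to construct the natural multiplication map
\[
\phi: \mathcal{B}\otimes_\kq k(q) \longrightarrow \sB, \qquad b\otimes r \mapsto rb,
\]
which is a well-defined $k(q)$-algebra homomorphism because the nonzero elements of $\kq$ act invertibly on $\sB$. I would first note that $\mathcal{B}$ is a graded $\kq$-subalgebra of $\sB$: the generators $\sz_i$ are homogeneous and the ground ring $\kq$ lies in degree zero, so $\mathcal{B}=\bigoplus_n (\mathcal{B}\cap \sB_n)$. Consequently $\mathcal{B}\otimes_\kq k(q)$ inherits a grading in which $(\mathcal{B}\otimes_\kq k(q))_n = \mathcal{B}_n\otimes_\kq k(q)$, and $\phi$ is a graded map by construction. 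It then remains to check that $\phi$ is bijective.

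For surjectivity, any element of $\sB$ is a finite $k(q)$-linear combination $\sum_i c_i m_i$ of monomials $m_i$ in the $\sz_j$, since the $\sz_j$ generate $\sB$ as a $k(q)$-algebra. Each monomial $m_i$ lies in $\mathcal{B}$, hence $\sum_i c_i m_i = \phi\bigl(\sum_i m_i\otimes c_i\bigr)$, showing $\phi$ is onto.

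For injectivity, I would use the standard common-denominator trick for localizations. Given any finite sum $\sum_i b_i\otimes r_i$ in $\mathcal{B}\otimes_\kq k(q)$, choose $p\in\kq\setminus\{0\}$ and $p_i\in\kq$ with $r_i = p_i/p$; using $\kq$-linearity, this sum equals $b\otimes (1/p)$ where $b=\sum_i p_i b_i \in \mathcal{B}$. If $\phi(b\otimes 1/p)=0$ then $(1/p)b=0$ in $\sB$, so $b=0$ in $\sB$ and hence in the subalgebra $\mathcal{B}$, giving $b\otimes 1/p=0$. This uses nothing more than the fact that $\mathcal{B}$ embeds into the $k(q)$-vector space $\sB$, so it is torsion-free over $\kq$.

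The proof is entirely routine; there is no real obstacle beyond noticing that every element of the tensor product has the simplified form $b\otimes 1/p$, which reduces injectivity to the torsion-freeness of $\mathcal{B}$ as a $\kq$-module.
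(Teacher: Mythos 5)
Your proof is correct and follows essentially the same route as the paper: both arguments come down to clearing denominators and to the fact that $\mathcal{B}$ sits inside the $k(q)$-vector space $\sB$, hence is $\kq$-torsion-free. The only (cosmetic, and if anything slightly cleaner) difference is that you define the multiplication map out of $\mathcal{B}\otimes_\kq k(q)$, which is automatically well-defined by bilinearity, whereas the paper constructs the inverse map $\sB \to \mathcal{B}\otimes_\kq k(q)$ on generators and checks it is injective.
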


\begin{proof}
  The assignment $\sz_i \mapsto \sz_i \otimes 1$ determines a graded
  surjective homomorphism of algebras $\sB \to \mathcal{B}\otimes_\kq
  k(q)$.  Suppose the kernel contains a non-zero element
  $X = \sum_{\mathbf{i}} (a_{\mathbf{i}} /
  b_{\mathbf{i}})\sz_{\mathbf{i}}$ where $a_{\mathbf{i}},
  b_{\mathbf{i}} \in \kq$ and $\sz_{\mathbf{i}} =
  \sz_{i_1},\ldots,\sz_{i_N}$.  We may assume all the $b_{\mathbf{i}}$
  are equal to $1$ by multiplying by an appropriate element of $\kq$,
  so that $X\in \mathcal{B}$.  Now $\mathcal{B}$ is a free
  $k[q^{ \pm 1 }]$-module as it is the direct sum of its graded pieces which
  are finitely generated $\kq$-submodules of a $k(q)$-module and
  therefore torsion-free. Therefore the  map $\mathcal{B} \to \mathcal{B}
  \otimes _\kq k(q)$ is injective and so $X=0$, a contradiction.
\end{proof}

It follows that $\cA'$ is a $\kq$-form of $\sA^e$.

\begin{cor} \label{useful_cor} If $\dim_kP(A)_m= \dim_{k(q)} \sA^e_m$
  for all $m$  then $\sA^e$ is a $q$-deformation of $P(A)$ via the
  $\kq$-form $\cA'$. \end{cor}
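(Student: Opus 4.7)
The plan is to combine Lemmas \ref{surject} and \ref{annoy} with a free-rank computation on the graded pieces of $\cA'$. The statement just before the corollary (a remark after Lemma \ref{annoy}) already tells us $\cA'$ is a $\kq$-form of $\sA^e$, so what remains is to identify $\cA'/(q-1)\cA'$ with $P(A)$ as graded $k$-algebras under the map of Lemma \ref{surject}.

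First I would note that each graded piece $\cA'_m$ is a finitely generated $\kq$-module: it is spanned over $\kq$ by the finitely many monomials of total degree $m$ in the homogeneous generators $\sx_i \otimes 1$ and $\tilde\Omega(\sx_i)$ (these being homogeneous because $q-1$ is a scalar in the $\kq$-module structure). As a $\kq$-submodule of the $k(q)$-vector space $\sA^e_m$ it is torsion-free, and since $\kq$ is a principal ideal domain, $\cA'_m$ is therefore free of some finite rank $r_m$.

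Next, the usual two specializations give
\begin{equation*}
\dim_{k(q)} (\cA'_m \otimes_\kq k(q)) \;=\; r_m \;=\; \dim_k (\cA'_m/(q-1)\cA'_m).
\end{equation*}
By the graded isomorphism $\cA' \otimes_\kq k(q) \cong \sA^e$ from Lemma \ref{annoy} we obtain $\dim_{k(q)} \sA^e_m = r_m$, so the hypothesis $\dim_k P(A)_m = \dim_{k(q)} \sA^e_m$ rewrites as $\dim_k P(A)_m = \dim_k (\cA'/(q-1)\cA')_m$.

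Finally, Lemma \ref{surject} provides a surjection of graded $k$-algebras $P(A) \twoheadrightarrow \cA'/(q-1)\cA'$. A graded surjection between graded vector spaces with equal (finite) dimension in every degree must be an isomorphism, so we get $P(A) \cong \cA'/(q-1)\cA'$ as graded $k$-algebras. Combined with $\cA'\otimes_\kq k(q) \cong \sA^e$, this is exactly the assertion that $\sA^e$ is a $q$-deformation of $P(A)$ via the $\kq$-form $\cA'$. The only potential obstacle is the finite-generation claim on the graded pieces, but this is immediate from the finiteness of the generating set, and everything else is bookkeeping with the two preceding lemmas.
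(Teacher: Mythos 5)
Your proposal is correct and follows essentially the same route as the paper: establish that each graded piece of $\cA'$ is free over $\kq$ (finitely generated and torsion-free over a PID), equate $\dim_k(\cA'/(q-1)\cA')_m$ with $\dim_{k(q)}\sA^e_m$ via Lemma \ref{annoy}, and conclude that the surjection of Lemma \ref{surject} is an isomorphism by dimension count. Your explicit remark that the graded pieces are finitely generated is a point the paper's proof of this corollary leaves implicit (it appears in the proof of Lemma \ref{annoy}), but otherwise the arguments coincide.
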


\begin{proof}
If we regard $k$ as a $k[q^{ \pm 1 }]$-module with $q$ acting
as $1$ then \[\mathcal{A}'/(q-1)\mathcal{A}'\cong\mathcal{A}'\otimes_\kq
k\] as graded algebras. 
Each graded piece of $\cA'$ is a free $\kq$-module, since it embeds
into a $k(q)$-module $\sA^e$ and is therefore torsion free.  So
for any $m$,
\[  \dim_k (\cA' \otimes _\kq k)_m = \dim_{k(q)} (\cA'\otimes_\kq
k(q))_m = \dim_{k(q)} \sA^e_m = \dim_k P(A)_m\]
where the second equality is because of Lemma \ref{annoy}.  It follows
that the surjection of Lemma \ref{surject} has to be injective.
\end{proof}

When $\sA$ has a PBW basis of polynomial type we can apply the
previous corollary to get:

\begin{theo} \label{poly_pbw} Suppose the set of elements of the form
  $\sx_1^{a_1}\cdots \sx_n^{a_n}$ for $a_i\geq 0$ form a basis of $\sA$.
  Then $\sA^e$ is a $q$-deformation of $P(A)$ via the $\kq$-form
  $\cA'$.  \end{theo}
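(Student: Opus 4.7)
The plan is to reduce Theorem \ref{poly_pbw} to Corollary \ref{useful_cor} by checking the hypothesis $\dim_k P(A)_m = \dim_{k(q)} \sA^e_m$ for every $m \geq 0$. Once that equality is established the result is immediate from Corollary \ref{useful_cor}.

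First I would exhibit an explicit homogeneous $k(q)$-basis of $\sA^e$. Because $\sA = \sA \otimes_{k(q)} \sA^{\opp}$ and the PBW hypothesis provides a monomial basis $\sx_1^{a_1}\cdots \sx_n^{a_n}$ of $\sA$ (and the same set is a basis of $\sA^{\opp}$), the tensors
\[
\sx_1^{a_1}\cdots \sx_n^{a_n} \otimes \sx_1^{b_1}\cdots \sx_n^{b_n}, \qquad a_i, b_i \geq 0,
\]
form a $k(q)$-basis of $\sA^e$, each having total degree $\sum_i (a_i+b_i)|\sx_i|$. Thus $\dim_{k(q)} \sA^e_m$ is the number of $2n$-tuples $(a_1,\ldots,a_n,b_1,\ldots,b_n)$ of nonnegative integers with $\sum_i(a_i+b_i)|\sx_i| = m$.

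On the Poisson side, Corollary \ref{poisson_pbw} gives a $k$-basis of $P(A)$ consisting of the monomials $y_1^{a_1}\cdots y_n^{a_n}\Omega(y_1)^{b_1}\cdots \Omega(y_n)^{b_n}$. Here $|y_i|=|\sx_i|$ by the definition of the grading on $P(A)$. To finish the counting argument I need $|\Omega(y_i)|=|\sx_i|$ as well; this is forced by relation (\ref{rel2}): the element $\beta(\sx_i,\sx_j) \in \sA$ is homogeneous of degree $|\sx_i|+|\sx_j|$ (since $\tilde\Omega(\sx_i)$ has degree $|\sx_i|$ in $\sA^e$ and (\ref{rrel2}) gives $\beta(\sx_i,\sx_j)\otimes 1 = \tilde\Omega(\sx_i)\sy_j - \sy_j\tilde\Omega(\sx_i)$), so $\{\bar{\sx_i},\bar{\sx_j}\}$ has degree $|\sx_i|+|\sx_j|$ in $A$, and comparing degrees in (\ref{rel2}) forces $|\Omega(y_i)|=|\sx_i|$. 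With this in hand, $\dim_k P(A)_m$ is counted by exactly the same $2n$-tuples as above.

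Comparing the two counts yields $\dim_k P(A)_m = \dim_{k(q)} \sA^e_m$ for every $m$, so Corollary \ref{useful_cor} applies and $\sA^e$ is a $q$-deformation of $P(A)$ via $\cA'$. The only real subtlety is the grading bookkeeping for $|\Omega(y_i)|$; everything else is a direct application of the two PBW results.
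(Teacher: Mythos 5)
Your proposal is correct and follows essentially the same route as the paper: reduce to Corollary \ref{useful_cor} by matching the PBW basis of $P(A)$ from Corollary \ref{poisson_pbw} against the tensor-product basis of $\sA^e$ coming from the polynomial PBW basis of $\sA$. The only addition is your explicit check that $|\Omega(y_i)|=|\sx_i|$, which the paper leaves implicit (and the only blemish is the typo $\sA=\sA\otimes_{k(q)}\sA^{\opp}$ for $\sA^e$).
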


\begin{proof}
Lemma \ref{poisson_pbw} says that $P(A)$
  has a PBW basis consisting of the elements
\[ y_1^{a_1}\cdots y_n^{a_n}\Omega(y_1)^{b_1}\cdots
\Omega(y_n)^{b_n} \]
for $a_i,b_i \geq 0$, where $y_i$ is the generator of $P(A)$ corresponding to
$\overline{\sx_i}$.  
Our assumptions on $\sA$ mean that $\sA^e$ has a
PBW basis consisting of
\[ (\sx_1\otimes 1)^{a_1}\cdots (\sx_n\otimes 1)^{a_n} (1\otimes
\sx_1)^{b_1}\cdots (1\otimes \sx_n)^{b_n}\]
for $a_i,b_i \geq 0$, therefore the hypothesis of Corollary
\ref{useful_cor} holds.
\end{proof}

\subsection{Example: the quantum plane} \label{quant_aff_sp_ex}
\begin{defn} The coordinate ring of the quantum plane is the
  $k(q)$-algebra  $\sA$  generated by $\sx,\sy$ subject
  to the relation 
$ \sx \sy = q \sy \sx $.
\end{defn}
$\sA$ is graded with $|\sx|=|\sy|=1$; it is a quadratic
algebra with basis consisting of all elements
$\sx^{a}\sy ^{b} $
with $a,b \geq 0$.  The $\kq$-form $\cA$ generated
by $\sx,\sy$ has semiclassical limit $A$ which is  polynomial
on the images $x,y$ of $\sx,\sy$ and has Poisson structure
determined by
$\{ x, y\} = x y$.
By Lemma \ref{pea} the Poisson enveloping algebra $P(A)$ is
generated by $x,y$ and $\Omega(x),\Omega(y)$  subject to 
\begin{gather*}
x y = y x,  
\Omega(x)y - y \Omega(x) = xy, \Omega(y)x-x\Omega(y)=-xy\\
\Omega(x)\Omega(y)-\Omega(y)\Omega(x) =   x\Omega(y)+
y\Omega(x)
\end{gather*}

Theorem \ref{poly_pbw} shows $\sA^e$ is a $q$-deformation of $P(A)$ via
the $\kq$-form generated by  $\sx,\sy$ and
$\tilde\Omega(\sx),\tilde\Omega(\sy)$.
By Lemma \ref{modified_rels}, the following relations hold in
$\sA^e$:
\begin{gather*}
\sx \sy = q\sy \sx , \tilde\Omega(\sx)\sy - \sy \tilde\Omega(\sx) = \sx \sy,
\tilde\Omega(\sy)\sx-\sx\tilde\Omega(\sy)=-\sx\sy\\
q\tilde\Omega(\sx)\tilde\Omega(\sy)-\tilde\Omega(\sx)\tilde\Omega(\sy) =
\sx\tilde\Omega(\sy)+
\sy\tilde\Omega(\sx). 
\end{gather*}
In fact by counting the dimension of the relation space they are sufficient to give a presentation of $\sA^e$.

\subsection{Example: $2\times 2$ quantum matrices}
\begin{defn} The algebra of $2\times 2$ quantum matrices $\sM$ is
  the $k(q)$-algebra generated by $\sa,\bb,\scc,\sd$ subject to the
  relations
\begin{gather} \label{m2rels}
\sa \bb = q \bb \sa \,\,\,\,\,\, \sa \scc = q \scc \sa\,\,\, \,\,\, \bb\scc=\scc \bb\,\,\, \,\,\, \bb
\sd = q \sd \bb\,\,\, \,\,\, \scc \sd = q \sd \scc \\
\nonumber \sa \sd - \sd \sa = (q-q^{-1})\bb \scc. \end{gather}
\end{defn}
$\sM$ is graded with $\sa,\bb,\scc,\sd$ in degree $1$; it is a
quadratic algebra which admits a 
basis consisting of all elements 
$ \sa^i \bb^j \scc ^k \sd^l $
for $i,j,k,l\geq 0$. %\cite{JFJFJF} 

The $\kq$-subalgebra $\mathcal{M}$  generated by
$\sa,\bb,\scc,\sd$ has semiclassical limit $M$ which is polynomial
on the images $a,b,c,d$ of $\sa,\bb,\scc,\sd$ with Poisson structure
determined by
\begin{gather}\label{bracket_formulas}
  \{a,b\} = ab \,\,\,\,\,\, \{a,c\} = ac \,\,\,\,\,\, \{a,d\} = 2bc
  \,\,\,\,\,\, \{b,c\}=0 \,\,\,\,\,\,
  \{b,d\}=bd\,\,\,\,\,\,\{c,d\}=cd. \end{gather}
By Lemma \ref{pea} the Poisson enveloping algebra $P(M)$ is
generated by $a,b,c,d,\Omega(a),\Omega(b),\Omega(c),\Omega(d)$ subject
to relations saying that $a,b,c,d$ commute, and 
\begin{center}\begin{tabular}{ll}
$\Omega(a)a - a\Omega(a)=0$ &
$\Omega(a)b-b\Omega(a) = ab$ \\
$\Omega(a)c-c\Omega(a) = ac $&
$\Omega(a)d-d\Omega(a) = 2bc $\\
$\Omega(b)a-a\Omega(b) = -ab $&
$\Omega(b)b-b\Omega(b)=0 $\\
$\Omega(b)c-c\Omega(b) = 0 $&
$\Omega(b)d-d\Omega(b) = bd $\\
$\Omega(c)a-a\Omega(c) = -ac $&
$\Omega(c)b-b\Omega(c) = 0 $\\
$\Omega(c)c-c\Omega(c)=0 $&
$\Omega(c)d-d\Omega(c) = -cd $\\
$\Omega(d)a-a\Omega(d) = -2bc $&
$\Omega(d)b-b\Omega(d) = -bd $\\
$\Omega(d)c-c\Omega(d) = -cd $&
$\Omega(d)d-d\Omega(d) =0$\\
\multicolumn{2}{l}{$\Omega(a)\Omega(b)-\Omega(b)\Omega(a) =
a\Omega(b)+b\Omega(a)$} \\
\multicolumn{2}{l}{$\Omega(a)\Omega(c)-\Omega(c)\Omega(a) =
a\Omega(c)+c\Omega(a)$} \\
\multicolumn{2}{l}{$\Omega(a)\Omega(d)-\Omega(d)\Omega(a) =
2b\Omega(c)+2c\Omega(b) $}\\
\multicolumn{2}{l}{$\Omega(b)\Omega(c)-\Omega(c)\Omega(b) = 0 $}\\
\multicolumn{2}{l}{$\Omega(b)\Omega(d)-\Omega(d)\Omega(b) =
b\Omega(d)+d\Omega(b)$ }\\
\multicolumn{2}{l}{$\Omega(c)\Omega(d)-\Omega(d)\Omega(c) =
c\Omega(d)+d\Omega(c)$} \\
\end{tabular}
\end{center}
Theorem \ref{poly_pbw} shows $\sM^e$ is a
$q$-deformation of $P(M)$ via the $\kq$-form generated by
$\sa,\bb,\scc,\sd$ and $\tilde\Omega(\sa), \tilde\Omega(\bb), \tilde\Omega(\scc),
\tilde\Omega(\sd)$.

By Lemma \ref{modified_rels}, 
the following relations hold in $\sM^e$:
\begin{center}\begin{tabular}{ll}
$\tilde\Omega(\sa)\sa - \sa\tilde\Omega(\sa)=0 $&
$\tilde\Omega(\sa)\bb-\bb\tilde\Omega(\sa) = \sa\bb$ \\
$\tilde\Omega(\sa)\scc-\scc\tilde\Omega(\sa) = \sa\scc $&
$\tilde\Omega(\sa)\sd-\sd\tilde\Omega(\sa) = (1+q^{-1})\bb\scc$ \\
$\tilde\Omega(\bb)\sa-\sa\tilde\Omega(\bb) = -\sa\bb$ &
$\tilde\Omega(\bb)\bb -\bb\tilde\Omega(\bb)=0 $\\
$\tilde\Omega(\bb)\scc-\scc\tilde\Omega(\bb) = 0 $&
$\tilde\Omega(\bb)\sd-\sd\tilde\Omega(\bb) = \bb\sd $\\
$\tilde\Omega(\scc)\sa-\sa\tilde\Omega(\scc) = -\sa\scc $&
$\tilde\Omega(\scc)\bb-\bb\tilde\Omega(\scc) = 0 $\\
$\tilde\Omega(\scc)\scc -\scc\tilde\Omega(\scc) = 0$ &
$\tilde\Omega(\scc)\sd-\sd\tilde\Omega(\scc) = -\scc\sd$ \\
$\tilde\Omega(\sd)\sa-\sa\tilde\Omega(\sd) = -(1+q^{-1})\bb\scc$ &
$\tilde\Omega(\sd)\bb-\bb\tilde\Omega(\sd) = -\bb\sd$ \\
$\tilde\Omega(\sd)\scc-\scc\tilde\Omega(\sd) = -\scc\sd$ &
$\tilde\Omega(\sd)\sd -\sd\tilde\Omega(\sd)=0$ \\
\multicolumn{2}{l}{$q\tilde\Omega(\sa)\tilde\Omega(\bb)-\tilde\Omega(\bb)\tilde\Omega(\sa)
= \sa\tilde\Omega(\bb)+\bb\tilde\Omega(\sa)$} \\
\multicolumn{2}{l}{$q\tilde\Omega(\sa)\tilde\Omega(\scc)-\tilde\Omega(\scc)\tilde\Omega(\sa)
= \sa\tilde\Omega(\scc)+\scc\tilde\Omega(\sa)$} \\
\multicolumn{2}{l}{$\tilde\Omega(\sa)\tilde\Omega(\sd)-\tilde\Omega(\sd)\tilde\Omega(\sa)
  = (1+q^{-1})\bb\tilde\Omega(\scc)+(1+q^{-1})\scc\tilde\Omega(\bb)
  -(q-q^{-1})\tilde\Omega(\bb)\tilde\Omega(\scc)$}\\
\multicolumn{2}{l}{$\tilde\Omega(\bb)\tilde\Omega(\scc)-\tilde\Omega(\scc)\tilde\Omega(\bb)
= 0 $}\\
\multicolumn{2}{l}{$q\tilde\Omega(\bb)\tilde\Omega(\sd)-\tilde\Omega(\sd)\tilde\Omega(\bb)
= \bb\tilde\Omega(\sd)+\sd\tilde\Omega(\bb)$} \\
\multicolumn{2}{l}{$q\tilde\Omega(\scc)\tilde\Omega(\sd)-\tilde\Omega(\sd)\tilde\Omega(\scc)
= \scc\tilde\Omega(\sd)+\sd\tilde\Omega(\scc)$} \\
\end{tabular}\end{center}

\section{Koszul algebras and modules}

We refer to \cite{PP} for general background on quadratic and Koszul
algebras.  Recall that a graded $k$-algebra $\Lambda$ 
is called \textbf{quadratic} if it there is a finite-dimensional
vector spce $V$ and a subspace $R \leq V\otimes_k V$ such that
$\Lambda \cong T(V)/(R)$ where $T(V)$ denotes the tensor algebra.  The
\textbf{quadratic dual} $\Lambda ^!$ of $\Lambda$ is
$T(V^*)/(R^\perp)$ where $V^* = \Hom_k(V,k)$ and $R^\perp$ is the
image of the annihilator of $R$ under the canonical isomorphism of
$(V\otimes_k V)^*$ with $V^* \otimes_k V^*$.  

\begin{ex} \label{quad_dual_2x2_matrices}
  The quadratic dual $\sM^!$ of $\sM$ is generated by
  $\sa^*,\bb^*,\scc^*, \sd^*$ subject to
  \begin{gather*}
    \sa^{*2}, \bb^{*2}, \scc^{*2}, \sd^{*2}, 
    \bb^* \scc^* + \scc^* \bb^* + (q-q^{-1})\sa^* \sd^* \\
    q\sa^* \bb^* +\bb^* \sa^*, q\sa^* \scc^* + \scc^* \sa^*, \sa^* \sd^*
    + \sd^* \sa^*, q\bb^* \sd^* + \sd^* \bb^*, q\scc^* \sd^* + \sd^*
    \scc^* .
  \end{gather*}
\end{ex}

A graded left
$\Lambda$-module $M$ is called \textbf{quadratic} if it is isomorphic
as a graded module to one of the form $(\Lambda \otimes_k M_0)/\Lambda
H$ where $M_0$ is a finite-dimensional vector space, $H \leq \Lambda_1
\otimes M_0$, and $M_0$ is homogeneous with respect to the grading.
The quadratic dual $M^!$ of $M$ is the left $\Lambda^!$-module
$(\Lambda^!  \otimes_k M_0^*)/(\Lambda^!  H^\perp)$ where $H^\perp$
denotes the image of the annihilator of $H$ under the canonical
isomorphism between $(\Lambda_1 \otimes_k M_0)$ and $\Lambda_1^*
\otimes_k M_0 ^* = (\Lambda^!)_1 \otimes_k M_0^*$.  If $M$ and $N$ are
graded left $\Lambda$-modules, $\ext^* _\Lambda(M,N)$ is bigraded, by
homological degree and by internal degree.  We write
$\ext^{ij}_\Lambda (M,N)$ for the part with homological degree $i$ and
internal degree $j$.  Write $k$ for the trivial $\Lambda$-module
$\Lambda / \bigoplus_{i>0}\Lambda_i$.  A quadratic algebra $\Lambda$
is called \textbf{Koszul} if $\ext^{ij}_\Lambda(k,k)$ is zero whenever
$i\neq j$, in which case $\ext^*_\Lambda(k,k) \cong \Lambda^!$ as
algebras.  A graded module $M$ over a Koszul algebra $\Lambda$ is
called Koszul if $\ext^{ij}_\Lambda(M,k)$ is zero if $i \neq j$, or
equivalently if $M$ admits a linear projective resolution, that is, a
projective resolution $P_* \twoheadrightarrow M$ such that $P_i$ is
generated by its component of degree $i$.

\subsection{Hochschild cohomology of Koszul algebras}

If $\Lambda$ is Koszul then $\Lambda^e$ is Koszul:
$\Lambda^{\operatorname{op}}$ is Koszul by \cite[remark on p.20]{PP},
and tensor products of Koszul algebras are Koszul by \cite[Corollary
3.1.2]{PP}.  Furthermore $\Lambda$ is a Koszul $\Lambda^e$-module by
\cite[Corollary 2.2]{GHMS}.

\begin{lem} \label{kos_dual} The quadratic dual $(_{\Lambda^e}\Lambda)^!$ of the
  $\Lambda^e$-module $\Lambda$ is isomorphic as a vector space to the
  dual quadratic algebra $\Lambda^!$.
\end{lem}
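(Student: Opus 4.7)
The plan is to give an explicit quadratic presentation of $\Lambda$ as a left $\Lambda^e$-module, compute the perp of the relation space, and identify the resulting quotient with a sign-twist of the regular $(\Lambda^!)^e$-module $\Lambda^!$. To begin, I would observe that the multiplication map $\mu \colon \Lambda^e \to \Lambda$, $a \otimes b^{\opp} \mapsto ab$, makes $\Lambda$ a cyclic left $\Lambda^e$-module generated by $M_0 = k \cdot 1$; its kernel is generated as a left ideal by $H = \{v \otimes 1 - 1 \otimes v : v \in V\}$ where $V = \Lambda_1$, since $(v \otimes 1) \cdot 1 = v = (1 \otimes v) \cdot 1$. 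The hypothesis that $\Lambda$ is a Koszul $\Lambda^e$-module forces no further relations, so this is a genuine quadratic presentation with $H$ the antidiagonal inside $(\Lambda^e)_1 \otimes M_0 \cong V \oplus V$. A direct pairing calculation then gives $H^\perp = \{(f,f) : f \in V^*\}$, the diagonal in $V^* \oplus V^*$.

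Next, I would apply the standard description of the Koszul dual of a tensor product: $(\Lambda^e)^! = (\Lambda \otimes \Lambda^{\opp})^!$ is the quadratic algebra with generators $V^* \oplus V^*$ and relations consisting of $R^\perp$ in each factor together with the requirement that generators from distinct factors anticommute. Combined with $(\Lambda^{\opp})^! = (\Lambda^!)^{\opp}$ this produces an algebra with the same underlying graded vector space as $(\Lambda^!)^e = \Lambda^! \otimes (\Lambda^!)^{\opp}$, though the multiplications differ by a Koszul sign rule.

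To conclude, I would introduce the automorphism $\sigma$ of $(\Lambda^e)^!$ that negates elements of odd degree coming from the second tensor factor. A routine check using the sign-twisted multiplication shows that $\sigma$ is a graded algebra automorphism which carries the diagonal $H^\perp$ to the antidiagonal $\{f \otimes 1 - 1 \otimes f^{\opp} : f \in V^*\}$; this antidiagonal is exactly the relation space cutting out $\Lambda^!$ as a quadratic $(\Lambda^!)^e$-module via the multiplication $(\Lambda^!)^e \to \Lambda^!$, so $\sigma$ descends to a graded vector space isomorphism $(_{\Lambda^e}\Lambda)^! \cong \Lambda^!$.

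The main obstacle is the sign bookkeeping needed to pass between $(\Lambda^e)^!$ and $(\Lambda^!)^e$, which agree only up to a Koszul twist in their multiplication; checking that $\sigma$ is a genuine algebra automorphism and that it intertwines $H^\perp$ with the antidiagonal is the key technical step of the argument.
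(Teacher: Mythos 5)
Your proposal is correct and follows essentially the same route as the paper: present $\Lambda$ as the quadratic $\Lambda^e$-module $\Lambda^e$ modulo the antidiagonal, compute that the perp is the diagonal, identify $(\Lambda^e)^!$ with the graded-commutative tensor product $\Lambda^! \hat\otimes_k (\Lambda^!)^{\opp}$, and recognise the quotient by the resulting ideal as $\Lambda^!$. The one place to be careful is your final step: after applying $\sigma$ you are still working inside the graded tensor product, not inside $(\Lambda^!)^e$, so the map identifying the quotient by the antidiagonal ideal with $\Lambda^!$ is not the literal multiplication of $(\Lambda^!)^e$ but a sign-twisted version of it; the paper handles exactly this point by exhibiting the quotient by the diagonal ideal as the twisted bimodule $\hat\Lambda^!$, which is $\Lambda^!$ as a graded vector space. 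Since the lemma only asserts a vector-space isomorphism these signs are harmless, and the detour through $\sigma$ is in fact unnecessary --- quotienting by the diagonal directly, as the paper does, already yields the required underlying vector space.
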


\begin{proof}
Let $\Lambda = T(V)/(R)$ so that $\Lambda^e =T(V \oplus V')/(R\oplus
R' \oplus C)$ where $V'$ is isomorphic to $V$ via a map that sends
$v\in V$
to $v'$, $R'$ is the image of $R$ under the twist map $\tau : v
\otimes w \mapsto w' \otimes v'$ and $C$ has basis $x_i\otimes x_j'-
x_j'\otimes x_i$ where $x_1,\ldots,x_n$ is some fixed basis of $V$.

Thus $(\Lambda^e)^!$ is isomorphic to $T(V^* \oplus V'^*)/(R^\perp
\oplus R'^\perp \oplus D)$ where $D$ has a basis consisting of all
tensors of the form $x_i^* \otimes x_j'^* + x_j'^* \otimes x_i ^*$
where the $x_i^*$ are the basis of $V^*$ dual to $x_1,\ldots,x_n$.  It
follows $(\Lambda ^e)^! \cong \Lambda^! \hat\otimes_k
(\Lambda^!)^{\textrm{op}}$ where $\hat\otimes_k$ denotes the graded commutative
tensor product: $(\lambda_1 \hat\otimes \mu_1)\cdot (\lambda_2
\hat\otimes \mu_2) = (-1)^{mn}\lambda_1\lambda_2 \hat\otimes
\mu_1\mu_2$ for $\lambda_2$, $\mu_1$ homogeneous of degrees $m$ and
$n$ respectively.

As a $\Lambda^e=T(V\oplus V')/(R\oplus R' \oplus C)$-module, $\Lambda
= \Lambda^e / (x_i - x_i' : i=1,\ldots,n)$.  Therefore the quadratic
dual of $_{\Lambda^e}\Lambda$ is $(\Lambda^e)^!/(x_i^* + x_i'^* :
i=1,\ldots,n)$.  This ideal corresponds to the ideal $(x_i^*\otimes 1
+ 1\otimes x_i'^*)$ under the isomorphism between $T(V\oplus
V')/(R\oplus R' \oplus C)$ and $\Lambda^! \hat\otimes_k
(\Lambda^!)^{\textrm{op}}$.  There is an exact sequence of $\Lambda^!
\hat\otimes (\Lambda^!)^{\textrm{op}}$-modules
\[ 0 \to (x_i^*\otimes 1 + 1\otimes x_i'^*) \to \Lambda^!
\hat\otimes_k (\Lambda^!)^{\textrm{op}} \to \hat\Lambda ^! \to 0 \]
where $\hat\Lambda^!$ is the $\Lambda^! \hat\otimes_k
(\Lambda^!)^{\textrm{op}}$-module which is $\Lambda^!$ as a vector
space, and with action $(\lambda \hat\otimes \mu') \cdot x =
(-1)^{|\mu||x| + |\mu|(|\mu|+1)/2}\lambda x \mu$ for $\lambda, x,\mu
\in \Lambda^!$, homogeneous, and the map $\Lambda^! \hat\otimes_k
(\Lambda^!)^{\textrm{op}} \to \hat\Lambda^!$ is determined by
$1\otimes 1 \mapsto 1$.  This completes the proof.
\end{proof}

\begin{defn}
  Let $M$ be a Koszul left-module for the Koszul $k$-algebra
  $\Gamma=T(V)/(R)$ and let $e_\Gamma \in \Gamma \otimes_k \Gamma^!$
  be $\sum_i v_i \otimes v_i^*$ where $v_i$ runs over a basis of $V$
  and $v_i^*$ is the corresponding dual basis element of $V^* =
  \Gamma^!_1$.  Then the \textbf{Koszul resolution} $K_\Gamma(M)$ is
  $\Gamma \otimes_k (M^!)^*$ with differential given by
  right-multiplication by $e_\Gamma$, and the \textbf{Koszul
    cocomplex} $\bar{K}_\Gamma(M)$ is $M \otimes_k M^!$ with
  differential given by left-multiplication by $e_\Gamma$
\end{defn}
\cite[\S2.3]{PP} shows that $K_\Gamma(M)$ is a minimal free resolution
of $M$, so that $\ext_\Gamma^*(M,M)$ is computed by cohomology of the
cocomplexes
\begin{equation*} %\label{kcmp}
\Hom_\Gamma( \Gamma \otimes_k (M^!)^*, M ) \cong \Hom_k((M^!)^*,M)
\cong M \otimes_k M^! = \bar{K}_\Gamma(M).
\end{equation*}

Consider the special case when $\Gamma
= \Lambda^e$ for some Koszul algebra $\Lambda$ and $M =
{_{\Lambda^e}\Lambda}$.  By Lemma \ref{kos_dual},
$(_{\Lambda^e}\Lambda)^!$ is $\hat \Lambda ^!$ so
$\bar{K}_{\Lambda^e}(\Lambda)$ is isomorphic to $\Lambda \otimes_k
\Lambda ^!$ as a vector space.  The corresponding differential is
\begin{equation}\label{diff} 
  \lambda \otimes \mu \mapsto \sum_i ( x_i \lambda \otimes x_i^* \mu
  +(-1)^{|\mu|+1} \lambda x_i \otimes \mu x_i^*) 
\end{equation}
for $\mu$ homogeneous of degree $|\mu|$ -- see \cite[p.5]{vdb}.
This differential makes $\Lambda \otimes_k \Lambda^!$, with its
natural multiplication, into a differential graded algebra.

This means that the
Hochschild cohomology ring $\HH(\Lambda) =
\ext^*_{\Lambda^e}(\Lambda,\Lambda)$ has two multiplications: one it
inherits as the cohomology of the differential graded algebra $\Lambda
\otimes_k \Lambda ^!$ and one from the Yoneda product on Ext.  We want
to show that they agree.

\begin{lem} \label{dga_for_HH}The cohomology of the differential graded algebra $\Lambda
  \otimes_k \Lambda^!$ equipped with the differential (\ref{diff}) is
  isomorphic as an algebra to $\HH^*(\Lambda)$.
\end{lem}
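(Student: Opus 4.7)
The plan is to produce a diagonal approximation on the Koszul resolution $K = K_{\Lambda^e}(\Lambda)$ of $\Lambda$ over $\Lambda^e$, then show that the cup product it induces on $\Hom_{\Lambda^e}(K, \Lambda) \cong \Lambda \otimes_k \Lambda^!$ is both the natural multiplication on $\Lambda \otimes_k \Lambda^!$ and the Yoneda product on $\HH^*(\Lambda)$. The vector-space identification and the formula (\ref{diff}) for the differential are already in hand; what is missing is a way to recognise the multiplication as coming from a diagonal lifting the identity on $\Lambda$.

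First I would write $K$ in the standard ``three-tensor'' form $\Lambda \otimes_k (\hat\Lambda^!)^* \otimes_k \Lambda$ with differential obtained from right-multiplication by $e_{\Lambda^e}$, and fix a basis of $(\Lambda^!_n)^*$ dual to a basis of $\Lambda^!_n$ so that the identification $\Hom_{\Lambda^e}(K, \Lambda)\cong \Lambda \otimes_k \Lambda^!$ is explicit. Then I would define a $\Lambda^e$-linear map
\[
\Delta : K \longrightarrow K \otimes_\Lambda K, \qquad 1 \otimes \xi \otimes 1 \mapsto \sum (1 \otimes \xi_{(1)} \otimes 1)\otimes_\Lambda (1 \otimes \xi_{(2)} \otimes 1),
\]
where $\xi \mapsto \sum \xi_{(1)}\otimes \xi_{(2)}$ is the coproduct on $(\Lambda^!)^*$ dual to multiplication in $\Lambda^!$. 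A direct computation, using only that $e_{\Lambda^e} = \sum x_i \otimes x_i^*$ is the canonical Koszul element and that $\Delta$ on $(\Lambda^!)^*$ splits $e_{\Lambda^e}$ in the expected way, shows $\Delta$ is a chain map lifting the identity $\Lambda \otimes_\Lambda \Lambda \xrightarrow{\sim} \Lambda$ in homological degree zero.

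Next, by a general fact from homological algebra, for any projective resolution $P_\bullet \to \Lambda$ over $\Lambda^e$ equipped with a diagonal $\Delta : P_\bullet \to P_\bullet \otimes_\Lambda P_\bullet$ lifting the identity, the formula $f \smile g = m_\Lambda \circ (f \otimes g) \circ \Delta$ defines a product on $\Hom_{\Lambda^e}(P_\bullet,\Lambda)$ whose induced product on cohomology is the Yoneda product on $\ext^*_{\Lambda^e}(\Lambda,\Lambda) = \HH^*(\Lambda)$. Applying this to $K$ and $\Delta$ above and unravelling the identifications, the cup product of basis elements $\lambda \otimes \mu$ and $\lambda'\otimes \mu'$ of $\Lambda \otimes_k \Lambda^!$ works out (up to the Koszul signs dictated by the $\hat\otimes$-structure of Lemma \ref{kos_dual}) to $\lambda\lambda' \otimes \mu\mu'$, which is exactly the natural product on $\Lambda \otimes_k \Lambda^!$ that makes (\ref{diff}) into a DGA differential.

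The main obstacle is administrative rather than conceptual: one must keep the sign conventions from Lemma \ref{kos_dual} (the graded-commutative tensor product $\Lambda^! \hat\otimes (\Lambda^!)^{\opp}$ and its quotient $\hat\Lambda^!$) consistent with the signs produced by $\Delta$ and by the evaluation map $\Hom_{\Lambda^e}(K,\Lambda)\otimes \Hom_{\Lambda^e}(K,\Lambda)\to \Hom_{\Lambda^e}(K\otimes_\Lambda K,\Lambda)$. Once those signs are reconciled and $\Delta$ is checked to be a chain map, the Yoneda product on $\HH^*(\Lambda)$ and the product coming from the DGA structure on $\Lambda \otimes_k \Lambda^!$ are visibly the same.
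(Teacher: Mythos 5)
Your proposal is correct and follows essentially the same route as the paper: construct a diagonal approximation on the Koszul resolution $K_{\Lambda^e}(\Lambda)$, invoke the general fact that a diagonal lifting the identity computes the Yoneda product as a cup product, and identify that cup product with the natural multiplication on $\Lambda\otimes_k\Lambda^!$. The only divergence is in how the diagonal is obtained --- the paper restricts the bar-complex comultiplication and uses Priddy's deconcatenation result to show the restriction lands in the Koszul subcomplex (getting the chain-map property for free), whereas you define it intrinsically as the dual of multiplication in $\Lambda^!$ (getting well-definedness for free) and defer the chain-map verification to a ``direct computation''; these produce the same map, and your deferred computation is precisely where the paper's appeal to Priddy does its work.
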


\begin{proof}
We already know that the cohomology of $\Lambda
  \otimes_k \Lambda^!$ with this differential agrees with
  $\HH^*(\Lambda)$ as a vector space.  To show that the two products
  are the same we need to examine the Koszul resolution more closely.
If $\Lambda = T(V)/(R)$ then $\Lambda^!_m$ is equal to 
\[ \frac{(V^*)^{\otimes m} }{ \sum_{i=0}^{m-2} (V^*)^{\otimes i}
  \otimes R^\perp \otimes (V^*)^{\otimes (m-2-i)} } \]
(the denominator is to be interpreted as zero if $m=0$ or $1$).  For
$m \geq 2$ the denominator is the annihilator of 
\[N_m= \bigcap_{i=0}^{m-2} V^{\otimes i} \otimes R \otimes V^{\otimes(m-2-i)} \]
so we can identify $(\Lambda^!_m)^*$ with $N_m$, and the Koszul resolution
$K_{\Lambda^e}(\Lambda)$ of $\Lambda$ over
$\Lambda^e$ can be written as
\begin{equation}\label{kos} 
K_{\Lambda^e}(\Lambda)_m = \Lambda \otimes N_m \otimes \Lambda 
\end{equation}
where $N_1=V$ and $N_0=k$.
As before let $x_1,\ldots,x_n$ be a basis of $V$, and given a sequence $\mathbf{i}=(i_1,\ldots,i_m)$
write $x_{\mathbf{i}}$ for $x_{i_1} \otimes \cdots \otimes x_{i_m} \in
V^{\otimes m}$.  The differential on (\ref{kos}) is
\[ 1 \otimes \left(\sum_\mathbf{i} \alpha_{\mathbf{i}} x_{\mathbf{i}} \right) \otimes 1
\mapsto \sum_{\mathbf{i}} \alpha_{\mathbf{i} } ( x_{i_1} \otimes
x_{(i_2,\ldots,i_m)} \otimes 1 - 1 \otimes x_{(i_1,\ldots,i_{m-1})}
\otimes x_{i_m} ) \]
where $\alpha_{\mathbf{i}} \in k$. 

Write $\mathbb{B}_*$ for the standard (bar) complex of $\Lambda$ \cite[IX.6]{CE}, whose $m$th
term is $\mathbb{B}_m= \Lambda^{\otimes (m+2)}$.  The inclusion $V
\hookrightarrow \Lambda$ induces a map
\[ \iota : K_{\Lambda^e}(\Lambda)_* \to \mathbb{B}_* \]
which is a morphism of chain complexes \cite[Proposition 3.3]{vdb}.
The bar complex of $\Lambda$ admits a comultiplication $\Delta:
\mathbb{B}_* \to \mathbb{B}\otimes_\Lambda \mathbb{B}$ defined by
\[ 
\Delta( \lambda \otimes y_{\mathbf{i}} \otimes \mu) = \sum_r
(\lambda \otimes y_{(i_1,\ldots,i_r)} \otimes 1) \otimes (1 \otimes
y_{(i_{m+1}, \ldots, i_m)} \otimes \mu) 
\]
where $y_{\mathbf{i}} = y_{i_1}\otimes\cdots\otimes y_{i_m} \in
\Lambda^{\otimes m}$. We will show that 
\begin{equation}\label{comult} 
\Delta (\im \iota) \subseteq (\im \iota )\otimes_\Lambda (\im \iota)
\end{equation}
so that $\Delta$ induces a comultiplication on $K_{\Lambda^e}(\Lambda)$.  
This is equivalent to proving that if 
$\sum_{\mathbf{i}} \alpha_{\mathbf{i}} x_{\mathbf{i}} \in N_m$ then 
\[
\sum _{\mathbf{i}}\alpha_{\mathbf{i}} x_{(i_1,\ldots,i_r)} \otimes
x_{(i_{r+1},\ldots,i_m)} \in N_r \otimes N_{m-r} 
\]
for any $r\leq m$.  Corollary 3.3 of \cite{Priddy} says that for any
sequence $\mathbf{j}$ of length $m-r$, 
\begin{equation*} %\label{cut_right} 
  \sum _{\mathbf{i} :
    (i_{r+1},\ldots,i_m)=\mathbf{r} } \alpha_{\mathbf{i}}
  x_{(i_1,\ldots, i_{r})} \in N_{r} \end{equation*} Thus
\[\sum _{\mathbf{i}}\alpha_{\mathbf{i}}  x_{(i_1,\ldots,i_r)} \otimes
x_{(i_{r+1},\ldots,i_m)} =\sum_{\mathbf{j}} \left( \sum_{\mathbf{i} :
    (i_{r+1},\ldots,i_m)=\mathbf{r} }
  \alpha_{\mathbf{i}} x_{(i_1,\ldots,i_r)} \right) \otimes
x_{\mathbf{j}}  \in N_r \otimes V^{\otimes m-r}\]
Similarly it lies in $V^{\otimes r} \otimes N_{m-r}$, so in 
\[(N_r \otimes V^{\otimes m-r}) \cap ( V^{\otimes r} \otimes
N_{m-r})= N_r \otimes N_{m-r} \]
completing the proof of (\ref{comult}).

If we identify $\Lambda \otimes_\Lambda \Lambda$ with $\Lambda$ then
$\mathbb{B}_*\otimes_\Lambda \mathbb{B}$ is a free resolution of
$\Lambda$ and $\Delta$ is a chain map lifting the identity map on
$\Lambda$.  Because of (\ref{comult}) the same holds for
$K_{\Lambda^e}(\Lambda)$.  \cite[p.4]{BGSS} point out that the Yoneda
product on $\HH(\Lambda)$ can be computed as follows: if $f:K_{\Lambda^e}(\Lambda)_r \to \Lambda $ and
$g:K_{\Lambda^e}(\Lambda)_s \to \Lambda$ are cocycles, the product of
the cohomology elements they represent is represented by 
 $f*g =(f\otimes_ \Lambda g)\circ  \Delta$
where $f\otimes_\Lambda g: K_{\Lambda^e}(\Lambda)_r \otimes_\Lambda K_{\Lambda^e}(\Lambda)_s
\to \Lambda$ is $\kappa \otimes \kappa' \mapsto f(\kappa)g(\kappa')$.

There is a linear isomorphism $\phi: \Lambda \otimes \Lambda^! \mapsto \Hom
_{\Lambda^e}(K_{\Lambda^e}(\Lambda)_*, \Lambda)$ that sends $\lambda \otimes
\mu$ to 
\[ 1\otimes \mathbf{n}\otimes 1 \mapsto \langle \mu,\mathbf{n} \rangle
\lambda\]
where $\mu \in \Lambda^!_m$, $\mathbf{n} \in N_m$ and $\langle
-,-\rangle$ is the pairing between $\Lambda^!_m$ and $N_m$.  We will
show this is is a homomorphism of algebras 
when $ \Hom_{\Lambda^e}(K_{\Lambda^e}(\Lambda)_*, \Lambda)$ is equipped with the
product $*$.

Since the product in $\Lambda^!$ is induced by tensor multiplication
in $T(V^*)$, if $\mu = \hat{\mu} + \ann(N_r) \in \Lambda^!_r$ and $\mu'
= \hat{\mu'} + \ann(N_s) \in \Lambda^!_s$ then
\[\phi(\lambda \lambda' \otimes \mu \mu') (1 \otimes \left(\sum_\mathbf{i}
\alpha_\mathbf{i} x_{\mathbf{i}} \right) \otimes 1) =
\sum_{\mathbf{i}} \alpha_\mathbf{i}\hat{\mu} (x_{(i_1,\ldots,i_r)}) \hat{\mu'}
(x_{(i_{r+1},\ldots,i_{r+s} )}) \lambda \lambda'. \]
On the other hand
\begin{multline*} (\phi(\lambda \otimes \mu) * \phi (\lambda' \otimes \mu') )
(1\otimes  \left(\sum_\mathbf{i}
\alpha_\mathbf{i} x_{\mathbf{i}} \right) \otimes 1) \\=  \phi(\lambda
\otimes \mu) \otimes \phi (\lambda' \otimes \mu') \sum_\mathbf{i} \alpha_{\mathbf{i}}
(1\otimes x_{(i_1,\ldots,i_r)} \otimes 1) \otimes (1 \otimes
x_{(i_{r+1},\ldots,i_{r+s} )} \otimes 1) \\=  \sum_{\mathbf{i}}\alpha_{\mathbf{i}} \hat{\mu} (x_{(i_1,\ldots,i_r)}) \hat{\mu'}
(x_{(i_{r+1},\ldots,i_{r+s} )}) \lambda \lambda'.
 \end{multline*}
This completes the proof.
\end{proof}

\subsection{Quadratic Poisson enveloping algebras are Koszul}

A polynomial Poisson algebra $A =k[x_1,\ldots,x_n]$ is a left Poisson
module over itself in the obvious way.  $A$ is therefore a
$P(A)$-module, isomorphic to the quotient of $P(A)$ by the left ideal
generated by the $\Omega(y_i)$.

\begin{lem} \label{quadratic_poisson} Let $A=k[x_1,\ldots,x_n]$ be a
  polynomial algebra, graded with each $x_i$ in degree one, equipped
  with a Poisson bracket $\{-,-\}$ such that $\{x_i,x_j\} \in A_2$ for
  all $i$ and $j$. Then $P(A)$ is a Koszul algebra and $A$ is a Koszul
  $P(A)$-module. \end{lem}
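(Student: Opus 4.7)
The plan is first to observe that $P(A)$ is a quadratic algebra when we grade by $|y_i| = |\Omega(y_i)| = 1$. Relations (\ref{rel1}) are manifestly of degree $2$. For (\ref{rel2}), the hypothesis $\{x_i,x_j\}\in A_2$ makes $\iota_A(\{x_i,x_j\})$ of degree $2$ in the $y$'s. For (\ref{rel3}), that same hypothesis makes each partial derivative $\partial\{x_i,x_j\}/\partial x_k$ lie in $A_1$, so each term $\iota_A(\partial_k\{x_i,x_j\})\Omega(y_k)$ on the right-hand side is quadratic. Thus all three families of relations are homogeneous of degree $2$, and $P(A)$ is quadratic.

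Next I would invoke Priddy's theorem (see \cite[Ch.~4]{PP}): a quadratic algebra whose relations form a quadratic Gr\"obner basis --- equivalently, whose ``ordered monomials'' with respect to some admissible order form a vector space basis --- is Koszul. Take the generator ordering $y_1<\cdots<y_n<\Omega(y_1)<\cdots<\Omega(y_n)$ with the induced degree-lex order on words. The leading terms of the relations (\ref{rel1})--(\ref{rel3}) are then $y_jy_i$ for $i<j$, $\Omega(y_i)y_j$, and $\Omega(y_j)\Omega(y_i)$ for $i<j$ respectively. The monomials avoiding all three of these as subwords are exactly
\[ y_1^{a_1}\cdots y_n^{a_n}\Omega(y_1)^{b_1}\cdots \Omega(y_n)^{b_n}, \]
which by Corollary~\ref{poisson_pbw} form a basis of $P(A)$. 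This is precisely the PBW condition, so $P(A)$ is Koszul.

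For the module claim, I would exhibit a linear projective resolution of $A$ as a $P(A)$-module. The natural candidate is the Chevalley--Eilenberg-type resolution $P(A)\otimes_A \Lambda^*_A(\Omega(A)) \twoheadrightarrow A$ noted at the end of Section~\ref{subsec:poisson_cohomology}. Since $\Omega(A)$ is $A$-free of rank $n$ on generators of internal degree $1$, the exterior power $\Lambda^m_A(\Omega(A))$ is $A$-free on $\binom{n}{m}$ generators concentrated in internal degree $m$. Hence $P(A)\otimes_A \Lambda^m_A(\Omega(A))$ is $P(A)$-free on generators in internal degree $m$, which is a linear projective resolution, witnessing Koszulness of $_{P(A)}A$.

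The only real obstacle is the confluence input underlying Priddy's theorem, but this is already absorbed into Corollary~\ref{poisson_pbw}, which was obtained via Rinehart's theorem; no further combinatorial bookkeeping is required. As a sanity check one may alternatively filter $P(A)$ by total $\Omega$-degree, observe that the associated graded is a polynomial ring in $2n$ variables (hence Koszul) and that $P(A)$ shares its Hilbert series by Corollary~\ref{poisson_pbw}, which gives Koszulness of $P(A)$ by the standard flat-deformation argument.
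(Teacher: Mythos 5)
Your argument is correct and follows essentially the same route as the paper: homogeneity of the relations gives quadraticity, the PBW basis from Corollary \ref{poisson_pbw} combined with Priddy's theorem gives Koszulness of $P(A)$, and the linearity of the Chevalley--Eilenberg resolution $P(A)\otimes_A\Lambda^*_A(\Omega(A))$ gives Koszulness of the module $A$. Your explicit leading-term analysis merely fills in detail the paper leaves implicit.
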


A Poisson algebra or bracket with this property will be called quadratic.

\begin{proof}
  If we place all $y_i$ and $\Omega(y_i)$ in degree one, the defining
  relations (\ref{rel1}), (\ref{rel2}) and (\ref{rel3}) of $P(A)$ are
  homogeneous of degree two.  Applying Corollary \ref{poisson_pbw}
  shows that $P(A)$ is a quadratic algebra with a PBW basis, and such
  algebras are Koszul by a result of Priddy \cite[Theorem 4.3.1]{PP}.

  When $\Omega(A)$ is a projective $A$-module, which holds when $A$ is
  polynomial, Huebschmann \cite[p.66]{Hueb} points out that
  $P(A)\otimes_A \Lambda^*_A (\Omega(A))$ with the
  Cartan-Chevalley-Eilenberg differential is a projective resolution
  of $A$ over $P(A)$.  Since this is clearly a linear resolution, $A$
  is a Koszul $P(A)$-module by \cite[p.20]{PP}.
\end{proof}

\begin{cor} \label{poisson_quad_dual}$P(A)^!$ is generated by $y_i^*$ and $\Omega(y_i^*)$
  subject to
\begin{gather*}
\Omega(y_i)^* \Omega(y_j)^* + \Omega(y_j)^* \Omega(y_i)^* \,\,\,\,\,\, 1
\leq i \leq j \leq n \\
\Omega(y_m)^* y_n^* + y_n^* \Omega(y_m)^* + \sum_{i<j}
\frac{\partial\{x_i,x_j\} }{\partial x_m \partial x_n} \Omega(y_i)^*
\Omega(y_j)^* \\
y_m^* y_n^* + y_n^* y_m^* - \sum_{i<j} \frac{\partial\{
  x_i,x_j\}}{\partial x_m \partial x_n} (\Omega(y_i)^* y_j^* -
\Omega(y_j)^* y_i^* ).
\end{gather*}
The subalgebra $O=\langle\Omega(y_1)^*,\ldots,\Omega(y_n)^*\rangle$ of
$P(A)^!$ is exterior of rank $n$.
\end{cor}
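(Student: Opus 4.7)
The plan is to compute $P(A)^!$ directly from the definition of quadratic dual, then deduce the structure of $O$ from an associated-graded argument.

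Let $V$ be the $2n$-dimensional vector space spanned by the generators $y_1,\ldots,y_n,\Omega(y_1),\ldots,\Omega(y_n)$, so that $P(A)=T(V)/(R)$ where $R \subseteq V\otimes V$ is spanned by the relations (\ref{rel1}), (\ref{rel2}), (\ref{rel3}). Under the natural pairing $\langle v^* \otimes w^*, v\otimes w\rangle = v^*(v)\,w^*(w)$ between $V^*\otimes V^*$ and $V\otimes V$, we have $P(A)^! = T(V^*)/(R^\perp)$. By quadraticity of the bracket, write
\[\{x_i,x_j\} = \tfrac{1}{2}\sum_{m,n} \frac{\partial^2\{x_i,x_j\}}{\partial x_m\,\partial x_n}\, x_m x_n, \qquad \frac{\partial\{x_i,x_j\}}{\partial x_k} = \sum_m \frac{\partial^2\{x_i,x_j\}}{\partial x_m\,\partial x_k}\, x_m,\]
so that each defining relation of $P(A)$ becomes an explicit element of $V\otimes V$.

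I would then take each candidate dual relation listed in the statement and verify directly that it pairs to zero against every element of $R$: the symmetric combinations among the $\Omega(y_i)^*$'s annihilate both (\ref{rel1}) (no $\Omega$'s present) and the antisymmetric-in-$\Omega$ part of (\ref{rel3}), while the second-derivative correction terms in the $y^*y^*$ and $\Omega^* y^*$ relations are engineered precisely to cancel the pairing against the quadratic $y$-part of (\ref{rel2}) and the $y\cdot\Omega(y_k)$ part of (\ref{rel3}). Completeness of the presentation then follows from a dimension count: the relations (\ref{rel1}), (\ref{rel2}), (\ref{rel3}) are linearly independent and number $\binom{n}{2}+n^2+\binom{n}{2}=2n^2-n$, so $\dim R^\perp = 4n^2-(2n^2-n)=2n^2+n$, which matches the $\binom{n+1}{2}+n^2+\binom{n+1}{2}=2n^2+n$ listed dual relations (visibly independent, since each involves a distinct leading monomial of $V^*\otimes V^*$).

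For the subalgebra $O$, the listed $\Omega^*$-relations (including the squares $\Omega(y_i)^{*2}=0$ from the $i=j$ case, which follow because $\mathrm{char}\,k = 0$) present $O$ as a quotient of the exterior algebra $\Lambda(\Omega(y_1)^*,\ldots,\Omega(y_n)^*)$. To rule out further collapse, I would filter $P(A)^!$ by total $y^*$-degree: the leading parts of the three families of relations are the standard anti-commutation relations on $\{y_i^*,\Omega(y_i)^*\}$, so the associated graded is a quotient of the full exterior algebra on $2n$ generators, of dimension at most $4^n$. On the other hand, Corollary \ref{poisson_pbw} gives $P(A)$ the Hilbert series $1/(1-t)^{2n}$, hence by Koszul duality (Lemma \ref{quadratic_poisson}) $P(A)^!$ has Hilbert series $(1+t)^{2n}$ and total dimension exactly $4^n$; so the associated graded coincides with the full exterior algebra on $2n$ generators, and its $\Omega^*$-subalgebra has dimension $2^n$. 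Since the $\Omega(y_i)^*$'s have $y^*$-filtration degree zero, this dimension is inherited by $O$, proving $O\cong \Lambda(k^n)$. The main technical obstacle is the bookkeeping of correction coefficients during the initial dualization; the exterior-subalgebra claim then follows cleanly from the Hilbert series comparison.
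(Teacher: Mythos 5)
Your argument is correct, and the two halves relate to the paper's proof differently. For the presentation of $P(A)^!$ the paper simply asserts that it "follows from the presentation of $P(A)$"; your version spells out what that means (pairing each candidate relation against $R$, then counting: $\dim R = 2n^2-n$ forces $\dim R^\perp = 2n^2+n$, which matches the number of visibly independent listed relations), so you are doing the same thing but making the completeness step explicit. The one place to be careful is the pairing verification you leave as "engineered precisely to cancel": the exact signs and the factor of $\tfrac12$ depend on which representative of $\{x_i,x_j\}$ in $V\otimes V$ you take and on the orientation of the isomorphism $(V\otimes V)^*\cong V^*\otimes V^*$, so that computation does need to be written out once to pin the coefficients down (the paper does not do this either). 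For the claim that $O$ is exterior of rank $n$ your route is genuinely different: you filter $P(A)^!$ by $y^*$-degree, observe that the leading terms of the relations present the full exterior algebra on $2n$ generators, and use the algebra Hilbert series $h_{P(A)^!}(t)=(1+t)^{2n}$ to force $\operatorname{gr}P(A)^!\cong\Lambda(k^{2n})$, whence $O=F_0$ has dimension $2^n$. The paper instead computes the Hilbert series $(1+t)^n$ of the quadratic dual \emph{module} $A^!=P(A)^!/P(A)^!(y_1^*,\ldots,y_n^*)$ via the Koszul-module identity $h_{M^!}(t)=h_{\Lambda^!}(t)h_M(-t)$, shows that quotient is spanned by the image of $O$ using the second relation, and sandwiches $\dim O=2^n$. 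Your version avoids invoking Koszulity of the module $A$ and only needs Koszulity (indeed, just the PBW property) of $P(A)$, at the cost of setting up the filtration; the paper's version has the side benefit of identifying $O$ with the module $A^!$, which is exactly the identification used immediately afterwards in the text. Both are sound.
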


\begin{proof}
The given presentation for $P(A)^!$ follows from the presentation of $P(A)$
given by relations (\ref{rel1}), (\ref{rel2}) and (\ref{rel3}).

\cite[Corollary 2.2]{PP} says that if $M$ is a Koszul module over a
Koszul algebra $\Lambda$ and $M^!$, $\Lambda^!$ are their Koszul duals
then $h_\Lambda(t)h_{\Lambda^!}(-t)=1$ and
$h_{M^!}(t)=h_{\Lambda^!}(t)h_M(-t)$ where $h_M(t), h_\Lambda(t), h_{M^!}(t), h_{\Lambda^!}$ are the Hilbert
series of $M.\Lambda,M^!,\Lambda^!$.  Applying this with $M=A$ whose
Hilbert series is $(1-t)^{-n}$ and $\Lambda=P(A)$ whose Hilbert series
is $(1-t)^{-2n}$ shows that $h_{P(A)^!}=(1+t)^{2n}$ and the Koszul dual
of the $P(A)$-module $A$ has Hilbert series $(1+t)^n$.

By definition, $A^! = P(A)^! /
P(A)^!(y_1^*,\ldots,y_n^*)$ and it follows by repeatedly using the
second relation that this quotient is spanned by $O +
P(A)^!(y_1^*,\ldots,y_n^*)$.  But $\dim A^!=2^n$, so $\dim O \geq 2^n$.
Since $O$ is certainly a quotient of an exterior algebra of rank $n$ we
in fact have $\dim O = 2^n$. The last statement follows.
\end{proof}

At the moment $A^!$ could mean two different things: the Koszul dual of
the algebra $A$ and the quadratic dual of the $P(A)$-module $A$.
In what follows $A^!$ always refers to the algebra $O$
with $P(A)$-module structure induced by the vector space isomorphism
between $O$ and $P(A)^!/P(A)^!(y_1^*,\ldots,y_n^*)$ from the proof of
the previous corollary.

\begin{rem}
  The quadratic dual of $P(A)$ can be described as
  follows.  A Poisson superalgebra (or graded Poisson algebra) $B$ is a graded-commutative
  algebra, that is $a b = (-1)^{|a||b|}b a$ for $a,b$ homogeneous,
  equipped with a bilinear bracket $\{-,-\}$ such that
\begin{gather*} \{a,b\}=(-1)^{1+|a||b|}\{b,a\} \\
 (-1)^{|a||c|}\{a,\{b,c\}\} + (-1)^{|a||b|}\{b, \{c,a\}\} +
(-1)^{|b||c|}\{c,\{a,b\}\}=0 \\
 \{a,bc\} = \{a,b\}c+ (-1)^{|a||b|}b\{a,c\}
\end{gather*}
 for all homogeneous $a,b,c \in B$.

A quadratic Poisson bracket on the polynomial algebra $A$ is
determined by a map $b : \Lambda^2 (A_1) \to S^2(A_1)$, where
$\Lambda^2$ and $S^2$ are the exterior and symmetric squares and $A_1$
is the vector space spanned by the $x_i$.  The dual
map $b^* : S^2(A_1^*) \to \Lambda ^2 (A_1^*)$ allows us to define a
quadratic Poisson superalgebra structure on the exterior algebra
generated by the $x_i^*$ by
\[ \{ x_i^*, x_j ^*\} := b^*(x_i^*, x_j^*). \]
The quadratic dual of $P(A)$ is
isomorphic to the Poisson enveloping algebra of the exterior algebra
generated by the $x_i^*$ with graded Poisson bracket determined by $b^*$.
\end{rem}

\begin{cor} \label{two_poisson_dgas}
Let $A$ be a polynomial algebra with a quadratic Poisson bracket.
Then the Koszul cocomplex $\bar{K}_{P(A)}(A) = A \otimes _k A^!$ with its natural multiplication and
differential $e_{P(A)}$ is a differential graded algebra whose
cohomology is isomorphic as an algebra to $\HP^*(A)$.
\end{cor}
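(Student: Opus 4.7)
The plan is to show that $\bar{K}_{P(A)}(A) = A \otimes_k A^!$, equipped with its natural product and the differential given by left multiplication by $e_{P(A)}$, is isomorphic as a DGA to the Poisson cohomology complex $\Alt_A^*(\Omega(A),A)$ with its Cartan--Chevalley--Eilenberg differential and shuffle product. Taking cohomology then gives $\HP^*(A)$ as an algebra by definition.

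For the graded algebra structure, Corollary \ref{poisson_quad_dual} identifies $A^!$ with the exterior algebra $O$ on $\Omega(y_1)^*,\ldots,\Omega(y_n)^*$. Letting $V$ denote the span of the $\Omega(x_i)$, the identification $\Omega(y_i)^* \leftrightarrow \Omega(x_i)^*$ gives an isomorphism of graded algebras $A \otimes_k A^! \cong A \otimes_k \Lambda^*V^*$. Composing with the chain of isomorphisms $\Alt_A(\Omega(A),A) \cong \Hom_A(A \otimes \Lambda^*V, A) \cong \Hom_k(\Lambda^*V,A) \cong A \otimes_k \Lambda^*V^*$ used in Section \ref{subsec:poisson_cohomology} to transfer the natural product to the shuffle product, we obtain an isomorphism of graded algebras between $\bar{K}_{P(A)}(A)$ and $\Alt_A^*(\Omega(A),A)$.

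To identify the differentials I pass to the underlying resolutions. Lemma \ref{quadratic_poisson} makes $K_{P(A)}(A) = P(A) \otimes_k (A^!)^*$ a minimal linear free resolution of $A$, and Huebschmann (cited at the end of Section \ref{subsec:poisson_cohomology}) shows that $P(A) \otimes_A \Lambda^*_A(\Omega(A))$ with the CCE differential is another. Since $\Omega(A)$ is free over $A$ on the $\Omega(x_i)$ and $(A^!)^* \cong \Lambda^*V$ via the obvious pairing, both resolutions have the same underlying $P(A)$-module $P(A) \otimes \Lambda^*V$. I would verify by direct calculation that the Koszul differential, right-multiplication by $e_{P(A)} = \sum_i y_i \otimes y_i^* + \sum_i \Omega(y_i) \otimes \Omega(y_i)^*$, matches the CCE formula under this identification: the $\Omega(y_i) \otimes \Omega(y_i)^*$ summands produce the derivation terms $\sum_j (-1)^{j+1} \Omega(x_{i_j}) f(\widehat{\cdots})$, using that $\Omega(y_i)$ acts on $A$ via $\{x_i,-\}$, while the $y_i \otimes y_i^*$ summands, after reduction modulo the $P(A)^!$-relations of Corollary \ref{poisson_quad_dual}, produce the bracket terms $\sum_{j<k}(-1)^{j+k} f([\Omega(x_{i_j}),\Omega(x_{i_k})] \wedge \cdots)$.

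Applying $\Hom_{P(A)}(-,A)$ then transports this identification to the cocomplexes, so $\bar{K}_{P(A)}(A)$ and $\Alt_A^*(\Omega(A),A)$ agree as cochain complexes; combined with the graded algebra identification of the second paragraph, this is an isomorphism of DGAs, and the corollary follows. The main obstacle will be the differential-matching calculation: the $y_i \otimes y_i^*$ contribution is not visible on the surface because $y_i^*$ must first be commuted past the $\Omega(y_j)^*$ factors using the mixed relation $\Omega(y_m)^* y_n^* + y_n^* \Omega(y_m)^* + \sum_{i<j} \frac{\partial^2\{x_i,x_j\}}{\partial x_m \partial x_n}\Omega(y_i)^* \Omega(y_j)^*$ from Corollary \ref{poisson_quad_dual}, and it is precisely in this step that the partial derivatives of the Poisson bracket must emerge so as to match the CCE formula.
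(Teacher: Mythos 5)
Your proposal is correct and follows essentially the same route as the paper: both exhibit the explicit isomorphism $A\otimes_k A^!\cong \Alt^*_A(\Omega(A),A)$ sending $\Omega(y_i)^*$ to $\Omega(x_i)^*$, and both locate the crux in the mixed relation of Corollary \ref{poisson_quad_dual}, whose second partial derivatives of $\{x_j,x_k\}$ reproduce the bracket terms of the Cartan--Chevalley--Eilenberg differential. The only real difference is one of economy: the paper compares the differentials directly on the cocomplexes and, since both are derivations for the respective products, checks agreement only on the algebra generators $x_i\otimes 1$ and $1\otimes\Omega(x_i)^*$, whereas your plan of matching the full CCE formula on the underlying resolutions before applying $\Hom_{P(A)}(-,A)$ commits you to verifying the identity on every basis element $\Omega(x_{i_1})\wedge\cdots\wedge\Omega(x_{i_m})$.
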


\begin{proof}
We will show that $\bar{K}_{P(A)}(A)$ with differential $e_{P(A)}$ is
isomorphic as a differential graded algebra to
$\Alt^*_A(\Omega(A),A)$.  As in Subsection
\ref{subsec:poisson_cohomology} we identify this with
$ A \otimes \Lambda^*(V^*) $
where $V^*$ is the span of the $\Omega(x_i)^*$, and map
$ A \otimes_k \Lambda^*(V^*)\to \bar{K}_{P(A)}(A) $ by $\phi: a \otimes
\Omega(x_i)^* \mapsto a \otimes \Omega(y_i)^*$.  This is clearly an
isomorphism of algebras; we only have to show that it respects the
differential.  Since $A \otimes_k \Lambda^*(V^*)$ is generated as an
algebra by the $x_i \otimes 1$ and $1 \otimes \Omega(x_i)^*$ it is
enough to check that $\phi(\delta(x_i \otimes 1) )= e_{P(A)} (x_1 \otimes 1)$
and $\phi (\delta(1 \otimes \Omega(x_i)^*))= e_{P(A)}( 1 \otimes
\Omega(y_i)^*)$, where $\delta$ is the differential corresponding to the
Cartan-Chevalley-Eilenberg differential $d$ on
$\Alt^*_A(\Omega(A),A)$.  The element $x_i \otimes 1 \in A \otimes_k
\Lambda^*(V^*)$ corresponds to $f_i : 1 \otimes 1 \mapsto x_i \in
\Alt^*_A(\Omega(A),A)$, and
\[ d(f_i) (1\otimes \Omega(x_j)) = \Omega(x_j) f(1\otimes 1) =
\{x_j,x_i\} \]
so that $\delta( x_i \otimes 1) = \sum_j \{x_j,x_i\}\otimes
\Omega(x_j)^*$, and the image of this under $\phi$ is $\sum_j
\{x_j,x_i\}\otimes \Omega(y_j)^*$.  Applying 
\[ e_{P(A)} = \sum_{j} \left( y_j \otimes y_j^* + \Omega(y_j)\otimes
\Omega(y_j^*)\right) \]
to $x_i \otimes 1$ gives $\sum_j \Omega(y_j) \cdot x_i \otimes
\Omega(y_j)^*= \sum_j \{x_j,x_i\} \otimes \Omega(y_j)^*$, so
$\phi(\delta(x_i \otimes 1) )= e_{P(A)} (x_1 \otimes 1)$ holds.

$1 \otimes \Omega(x_i)^*$ corresponds to the element $f_i$ of
$\Alt_A(\Omega(A),A)$ that sends $1\otimes \Omega(x_j)$ to $1$ if
$i=j$ and $0$ otherwise. Therefore
\[ df_i(1 \otimes \Omega(x_j)\wedge \Omega(x_k) ) =-df_i(1\otimes_A
\Omega(\{x_j,x_k\}) \]
which is equal to minus the coefficient of $\Omega(x_i)$ in
$\Omega(\{x_j,x_k\})$.  Therefore \[\delta (1 \otimes \Omega(x_i)^*) = -
  \sum_{j<k} c_{ijk} \otimes \Omega(x_j)^*\wedge\Omega(x_k)^*\] where
$c_{ijk}$ is the coefficient of $\Omega(x_i)$ in
$\Omega(\{x_j,x_k\})$.  On the other hand, applying $e_{P(A)}$ to $1
\otimes \Omega(y_i)^*$ gives 
\[\sum_{r} y_r \otimes y_r^* \Omega(y_i)^* = -\sum_{j<k} \left(\sum_r
  y_r \frac{\partial\{x_j,x_k\}}{\partial x_r\partial x_i} \right)
\otimes \Omega(y_j)^*\Omega(y_k)^* \]
using the second relation from Corollary \ref{poisson_quad_dual},
which completes the proof.
\end{proof}

The polynomial algebra $A=k[x_1,\ldots,x_n]$ admits derivations
$\partial_i=\frac{\partial}{\partial x_i}$ defined in the usual way.
Similarly its quadratic dual $A^!=k\langle \Omega(y_1)^*,\cdots,\Omega(y_n)^*
\rangle$ has graded derivations $\partial_i^*$ defined by 
\begin{equation*}
	\partial_i^*( \Omega(\mathbf{y})^{*\mathbf{b}})=(-1)^{\sum_{j<i} b_j}b_i \Omega(\mathbf{y})^{*\mathbf{b}-e_i}
\end{equation*}
where $e_i$ is the vector with a $1$ in position $i$ and zeroes 
elsewhere and $\Omega(\mathbf{y})^{*\mathbf{b}}$ denotes 
$\Omega(y_1)^{*b_1}\cdots \Omega(y_n)^{*b_n}$.
We can use these to build a derivation on the Koszul cocomplex:

\begin{lem}
	\label{almost_homotopy}
Let $A=k[x_1,\ldots,x_n]$ be a quadratic Poisson algebra and
let $A \otimes_k A^!$ be the Poisson cocomplex computing $\HP^*(A)$ with
differential $e=e_{P(A)}$.  Let $h = \sum_i \partial_i \otimes
\partial^*_i$.  Then \begin{equation}
		he + eh = \sum_i \partial_i H_i \otimes 1 + 1
		\otimes \partial_i^*  H^*_i
		\label{ht}
	\end{equation}
	where $H_i(x)=\{x_i,x\}$ and
	$H_i^*(X)=\{\Omega(y_i)^*,X\}=y_i^*\cdot X$.
\end{lem}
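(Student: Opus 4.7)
The plan is to evaluate the left-hand side of (\ref{ht}) on a general element $f\otimes X\in A\otimes A^!$ and peel off the part that coincides with the right-hand side, leaving an expression that must be shown to vanish. First I would unpack the differential: since $y_j$ acts on $A$ as multiplication by $x_j$, $\Omega(y_j)$ acts as $H_j$, $\Omega(y_j)^*$ acts on $A^!=O$ by exterior multiplication, and $y_j^*$ acts as $H_j^*$ via the left $P(A)^!$-module structure on $A^!$, we have
\[ e(f\otimes X)=\sum_j x_jf\otimes H_j^*(X)+\sum_j H_j(f)\otimes \Omega(y_j)^*X. \]
I would then expand $eh(f\otimes X)$ and $he(f\otimes X)$, using the ordinary Leibniz rule for $\partial_i$ together with the graded Leibniz rule $\partial_i^*(\Omega(y_j)^*X)=\delta_{ij}X-\Omega(y_j)^*\partial_i^*(X)$.

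After cancellation, two terms of $he+eh$ are exactly the right-hand side: $\sum_jf\otimes\partial_j^*H_j^*(X)$ arises from $he$ via the $\delta_{ij}f$ piece of $\partial_i(x_jf)$, while $\sum_j\partial_jH_j(f)\otimes X$ arises from $he$ via the $\delta_{ij}X$ piece of $\partial_i^*(\Omega(y_j)^*X)$. What remains is to show that
\[ \sum_{i,j}x_j\partial_i(f)\otimes\bigl(H_j^*\partial_i^*+\partial_i^*H_j^*\bigr)(X)=\sum_{i,j}[\partial_i,H_j](f)\otimes\Omega(y_j)^*\partial_i^*(X). \]

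For the $A$-factor on the right, the Leibniz rule for the Poisson bracket gives $[\partial_i,H_j](f)=\sum_k\partial_k(f)\,\partial_i\{x_j,x_k\}$ once the second-derivative terms cancel by commutativity of partial derivatives. The dual calculation is the main obstacle: because $H_j^*$ is defined via the module action and not as multiplication by an element of $A^!$, no Leibniz rule is immediately available. I would instead apply the quadratic relation
\[ y_j^*\Omega(y_m)^*=-\Omega(y_m)^*y_j^*-\sum_{k<l}\partial_m\partial_j\{x_k,x_l\}\Omega(y_k)^*\Omega(y_l)^* \]
from Corollary \ref{poisson_quad_dual} and induct on the exterior degree of $X$. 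The induction step rewrites $H_j^*(\Omega(y_p)^*Y)$ using this presentation, so that after applying $\partial_i^*$ on both sides and assembling the anticommutator we obtain the closed formula
\[ \bigl(H_j^*\partial_i^*+\partial_i^*H_j^*\bigr)(X)=-\sum_{m,l}\partial_m\partial_j\{x_i,x_l\}\,\Omega(y_l)^*\,\partial_m^*(X). \]
Tracking signs while pulling $\partial_i^*$ past both the module action of $y_j^*$ and the exterior multiplication by $\Omega(y_p)^*$ is the most delicate part of the argument.

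With both formulas in hand, after relabelling indices every surviving term takes the form $\partial_p(f)\otimes\Omega(y_l)^*\partial_m^*(X)$ with scalar coefficient
\[ -\sum_jx_j\partial_m\partial_j\{x_p,x_l\}-\partial_m\{x_l,x_p\}. \]
This vanishes in two steps: Euler's identity applied to the degree-two polynomial $\{x_p,x_l\}$ gives $\sum_jx_j\partial_j\{x_p,x_l\}=2\{x_p,x_l\}$, and differentiating by $\partial_m$ yields $\sum_jx_j\partial_m\partial_j\{x_p,x_l\}=\partial_m\{x_p,x_l\}$; antisymmetry $\{x_p,x_l\}=-\{x_l,x_p\}$ then cancels the two terms, completing the proof.
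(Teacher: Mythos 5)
Your proof is correct, but it takes a genuinely different route from the paper. The paper argues that $he+eh$ and the right-hand side of (\ref{ht}) are both derivations of the algebra $A\otimes_k A^!$ (the latter via the symmetry identities for $\{-,-\}$ and its dual) and then simply checks that the two agree on the algebra generators $x_r\otimes 1$ and $1\otimes\Omega(y_r)^*$; this is much shorter than your term-by-term expansion. Your computation, by contrast, requires the explicit anticommutator formula for $H_j^*\partial_i^*+\partial_i^*H_j^*$, which I have checked: the induction on exterior degree using the relation $y_j^*\Omega(y_m)^*=-\Omega(y_m)^*y_j^*-\sum_{k<l}\partial_m\partial_j\{x_k,x_l\}\Omega(y_k)^*\Omega(y_l)^*$ does close up (the correction terms produced by $H_j^*$ and by the graded Leibniz rule for $\partial_i^*$ combine, via antisymmetry of $\{x_k,x_l\}$ in $k,l$, into exactly the claimed closed form), and the final cancellation via Euler's identity for the quadratic polynomial $\{x_p,x_l\}$ together with antisymmetry is right. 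What your approach buys is self-containedness: the paper's shortcut rests on the assertion that $h$ is a graded derivation, but $h=\sum_i(\partial_i\otimes 1)\circ(1\otimes\partial_i^*)$ is a sum of composites of derivations (a second-order operator), not a derivation --- e.g.\ $h(x\otimes\Omega(y)^*)\neq 0$ while $h$ kills both factors --- so the paper's justification that $he+eh$ is a derivation is too quick, even though the conclusion makes it one a posteriori. Your direct verification sidesteps this entirely, at the cost of the sign-tracking and the auxiliary induction; the paper's argument is the more economical one once the derivation property of $he+eh$ is properly established.
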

\begin{proof}
Since $h$ and $e$ are (graded) derivations, so is $he+eh$.  We first show
that the right hand side of (\ref{ht}) is a derivation.

Since $\{-,-\}$ and its dual are (graded) derivations when one argument
is fixed,
\begin{align} \label{sy1} \{x,y\}&=\sum_i \partial_i(x) H_i(y)
	= -\sum_i \partial_i(y)H_i(x) \\
\nonumber\{z,w\}&=(-1)^{|z|+1}\sum_i \partial^*_i(z) H_i^*(w) =
	(-1)^{|z||w|+|w|}\sum_i \partial^*_i(w) H_i^*(z) \end{align}
for $z,w \in A^!$ and $x,y \in A$. Furthermore 
\begin{equation*}  \partial_i H_i(xy) =  \partial_i(x)H_i(y)+x\partial_iH_i(y)+\partial_iH_i(x) y +
	H_i(x)\partial_i(y)\end{equation*}
so summing over $i$ and applying (\ref{sy1}) shows $\sum_i
\partial_i H_i$ is a derivation.  Similarly so is $\sum_i
\partial_i^*H_i^*$, and thus
$\sum_i \partial_i H_i \otimes 1 + 1 \otimes
\partial_i^*H_i^*$ is a derivation.

It is easy to see that $he+eh$ and $\sum_i \partial_iH_i \otimes 1 + 1
\otimes \partial_i^*H_i^*$ agree on $x_r\otimes 1$ and $1\otimes
\Omega(y_r)^*$ for any $r$.  Since these elements generate $A\otimes
A^!$, the two derivations are equal. \end{proof}

The utility of this result
is that in some cases, including that of the semiclassical limits we are
interested in, $he+eh$ sends each element of the PBW basis 
to a scalar multiple of itself.  
Therefore some multiple of $h$ will be a contracting homotopy for
certain parts of the Koszul cocomplex.

\subsection{$q$-deformations of the Koszul cocomplex}

Let $\sA$ be a Koszul $k(q)$-algebra which has a semiclassical limit
$A$ such that $\sA^e$ is a $q$-deformation of $P(A)$
Then $\HH^*(\sA)$ is computed by the differential
graded algebra $\sA\otimes
\sA^!$ with the differential $e_{\sA^e}$ and $\HP^*(A)$ is computed by
$A \otimes A^!$ with differential $e_{P(A)}$.  We want to show that
the first of these DGAs is a $q$-deformation of the second.

\begin{prop} \label{defm_of_Kos_cocomp} 
Let $\sA$ be a Koszul $k(q)$-algebra minimally generated by
$\sx_1,\ldots, \sx_n$ and let $\cA$ be the $\kq$-form of $\sA$
generated by the $\sx_i$ and admitting
a semiclassical limit $A$ which is polynomial on the images of the $\sx_i$.
Suppose the map $P(A) \twoheadrightarrow \cA'/(q-1)\cA'$  of Lemma
\ref{surject} is an isomorphism.  Then the differential graded algebra
$\bar{K}_{\sA^e}(\sA)$ is a $q$-deformation of $\bar{K}_{P(A)}(A)$.
\end{prop}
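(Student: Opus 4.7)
The plan is to exhibit a $\kq$-sub-DGA $\cD$ of $\bar{K}_{\sA^e}(\sA)=\sA\otimes\sA^!$ (with the differential (\ref{diff})) whose reduction modulo $(q-1)$ is $\bar{K}_{P(A)}(A)=A\otimes A^!$ with Poisson differential $e_{P(A)}$. Let $\cA^!$ denote the $\kq$-subalgebra of $\sA^!$ generated by $\sx_1^*,\ldots,\sx_n^*$; by Lemma \ref{annoy}, $\cA^!$ is a $\kq$-form of $\sA^!$ with free graded pieces. A direct expansion shows that (\ref{diff}) maps $\cA\otimes_\kq\cA^!$ into $(q-1)(\cA\otimes_\kq\cA^!)$, because in the $q=1$ limit $A$ is commutative and $A^!$ is graded anticommutative, so the two summands of (\ref{diff}) cancel; the naive form thus induces zero differential modulo $(q-1)$. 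To remedy this I rescale: define $\cA^!_{(q-1)}\subset\sA^!$ as the $\kq$-submodule whose degree-$m$ component is $(q-1)^m\cA^!_m$, a graded $\kq$-subalgebra generated by $(q-1)\sx_1^*,\ldots,(q-1)\sx_n^*$. Set $\cD:=\cA\otimes_\kq\cA^!_{(q-1)}$.

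This $\cD$ is a graded $\kq$-subalgebra of $\sA\otimes\sA^!$, and (\ref{diff}) restricts to $\cD$: for $z\in\cA\otimes\cA^!_m$, one has $d(z)\in(q-1)(\cA\otimes\cA^!_{m+1})$, so $d((q-1)^m z)=(q-1)^m d(z)\in(q-1)^{m+1}(\cA\otimes\cA^!_{m+1})=\cD_{m+1}$. Base change yields $\cD\otimes_\kq k(q)\cong \bar{K}_{\sA^e}(\sA)$ as DGAs, since $(q-1)$ is invertible in $k(q)$.

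Next, identify $\cD/(q-1)\cD$ with $\bar{K}_{P(A)}(A)$ at the algebra level. We have $\cD_m/(q-1)\cD_m\cong A\otimes A^!_m$ via $(q-1)^m z\mapsto \bar{z}$, compatibly with multiplication. The required isomorphism $\cA^!/(q-1)\cA^!\cong A^!$ (sending $\overline{\sx_i^*}\mapsto \Omega(y_i)^*$) follows from matching Hilbert series: by Koszul duality with the polynomial-type PBW algebra $\sA$, $\sA^!$ has Hilbert series $(1+t)^n$, hence so does $\cA^!/(q-1)\cA^!$; on the other hand, the quadratic relations of $\cA^!$, generated by $R_\kq^\perp\subset V_\kq^*\otimes V_\kq^*$ (the orthogonal complement of the relation space $R_\kq$ of $\cA$), reduce modulo $(q-1)$ to $R_0^\perp$, which is precisely the exterior-algebra relation space of $A^!$, since $R_\kq/(q-1)R_\kq=R_0$ is the commutator relation space of $A$.

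Finally, I would verify that the induced differential $\bar{d}:A\otimes A^!\to A\otimes A^!$, given by $\bar{d}(\bar{z})=\overline{d(z)/(q-1)}$, equals $e_{P(A)}$. By the Leibniz rule it is enough to check on the generators $\sx_i\otimes 1$ and $1\otimes(q-1)\sx_i^*$. For $\sx_i\otimes 1$ we compute $d(\sx_i\otimes 1)=\sum_j(\sx_j\sx_i-\sx_i\sx_j)\otimes\sx_j^*=-(q-1)\sum_j\beta(\sx_i,\sx_j)\otimes\sx_j^*$, whose reduction after dividing by $(q-1)$ yields $\sum_j\{x_j,x_i\}\otimes\Omega(y_j)^*=e_{P(A)}(x_i\otimes 1)$, matching the computation in the proof of Corollary \ref{two_poisson_dgas}. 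For $1\otimes(q-1)\sx_i^*$ the analogous computation uses the hypothesis $P(A)\cong\cA'/(q-1)\cA'$, which dictates the correspondence $\sx_i\otimes 1\leftrightarrow y_i$ and $\tilde\Omega(\sx_j)\leftrightarrow \Omega(y_j)$ and hence how $\cA^!$-commutators reduce to Poisson brackets. The main obstacle is this final differential comparison: it requires a careful bookkeeping of the $(q-1)$-factors introduced by the rescaled $\kq$-form, and matching against the explicit Poisson differential in Corollary \ref{two_poisson_dgas} via Lemma \ref{modified_rels}.
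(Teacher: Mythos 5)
Your proposal is correct and follows essentially the same route as the paper: your $\cD=\cA\otimes_\kq\cA^!_{(q-1)}$ is exactly the paper's $\kq$-form $\cC=\cA\otimes_\kq\cB$ with $\cB$ generated by the $(1-q)\sx_i^*$, the identification of its reduction with the exterior algebra via the reduced relations plus a Hilbert-series count is the same, and the differential is checked on the same generators. The only substantive point stated loosely is why the graded commutators and squares of the $\sx_i^*$ lie in $(q-1)\cA^!$ (the paper establishes this by exhibiting an explicit basis $\rho_{rs}+(q-1)^2\varepsilon_{rs}$ of $R^\perp$ rather than asserting that forming $R^\perp$ commutes with reduction mod $q-1$), but your Hilbert-series argument closes that gap.
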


\begin{proof} We split the proof into sections.

\mbox{}

\noindent \textbf{Hilbert series}. Under our hypotheses,
\[ \dim A_r = \operatorname{rank} \cA_r = \dim \sA_r \]
for any $r$.  The first equality holds because $\cA_r$ is free as a
$\kq$-module, being finitely-generated torsion-free.  For the
second, if $b_1,\ldots$ is a basis of $\sA_r$ then by
multiplying by an appropriate scalar we may assume the $b_i$ lie in
$\cA_r$. Thus $\operatorname{rank} \cA_r \geq \dim \sA_r$.  But any
$\kq$-linearly independent set in $\cA_r$ is $k(q)$-linearly
independent in $\sA_r$, so the opposite inequality holds.  We get that
$\sA$ has the same Hilbert series as the polynomial algebra $A$.  The Hilbert series of a Koszul algebra determines that of its Koszul
dual by \cite[Corollary 2.2.2]{PP}, so $\sA^!$ has the same Hilbert
series as the exterior algebra $A^!$.

\mbox{}

\noindent \textbf{Relations in} $\sA^!$.
  Write $\sA = T(V)/(R)$ as a quadratic algebra where $V$ has a basis
  $v_1,\ldots,v_n$ whose images in $\sA$ are
  $\sx_1,\ldots,\sx_n$. Existence of the semiclassical limit implies
  that for
  each $i<j$ there is an element $r_{ij}$ of $R$ of the form
  \[ v_i \otimes v_j - v_j \otimes v_i - (q-1) \hat\beta_{ij} \] where
  $\hat\beta_{ij}$ lies in the $\kq$-span of the $v_s\otimes v_t$.
By substituting $r_{st}$ into the $\hat{\beta_{ij}}$ we obtain
elements $r'_{ij}$ of $R$ of the form
\[ v_i \otimes v_j - v_j \otimes v_i - (q-1) \hat\beta'_{ij}+(q-1)^2 
	\gamma_{ij} \] 
where only tensors of the form $v_s \otimes v_t$ for  $s<t$ appear in
$\beta'_{ij}$, and the image of $\hat\beta'_{ij}$ in $A$ is
$\{x_i,x_k\}$.  These $r'_{ij}$ are $k(q)$-linearly independent since
they are even linearly independent modulo $q-1$.  They form a $k(q)$-basis
of $R$, for if $R$ had larger dimension $\dim \sA_2$ would be smaller than
$n(n+1)/2$.

Let $v_1^*,\ldots,v_n^*$ be the basis of $V^*$ dual to $v_1,\ldots,v_n$.
For $r<s$ define
\[ \rho_{rs} = v_r^*\otimes v_s^* + v_s^* \otimes v_r^* -
	(q-1)\sum_{k<l}\frac{\{x_k,x_l\}}{\partial x_r \partial
	x_s}v_l^*\otimes v_k^* \]
where $x_i$ is the image of $\sx_i$ in $A$. Furthermore define
\[ \rho_{rr} = v_r^*\otimes v_r^* - (q-1)\sum_{k<l}
	\frac{1}{2}\frac{\{x_k,x_l\}}{\partial x_r^2} v_l^*\otimes v_k^*
	\]
Then for any $r\leq s$ and any $i<j$ we have $\rho_{rs}(r_{ij}) \in
(q-1)^2 \kq$.  As before, the $\rho_{rs}$ are linearly independent and
there are some elements $\varepsilon_{rs}$ in the $\kq$-span of the $v_i^* \otimes
v_j^*$ such that 
\begin{equation} \rho_{rs} + (q-1)^2 \varepsilon_{rs} \,\,\,\,\,\,\, r\leq
	s \label{rels_in_Ashriek} \end{equation}
is a basis of $R^\perp$.  Recall that $\sA^! = T(V^*)/(R^\perp)$ and write
$\sx_i^*$ for the image of $v_i^*$ in $\sA^!$. 
Let $\cB$ be the $\kq$-subalgebra of $\sA^!$ generated by the elements
$(1-q)\sx_i^*$.  The relations
(\ref{rels_in_Ashriek}) show that 
\[ \sx_i^* \sx_j^* + \sx_j^*\sx_i^* =0 \mod (q-1) \]
Since $\sA^!$ and hence $\cB$ has the same Hilbert series as $A^!$, $\cB/(q-1)\cB$ is isomorphic
to the exterior algebra $A^!$.  

Let $\cB'$ be the subalgebra of $\sA^! \otimes_{k(q)} (\sA^!)^{\opp}$
generated by $\sx_i ^* \otimes 1 + 1 \otimes \sx_i^*$ and
$(1-q)\otimes \sx_i^*$ for $1 \leq i \leq n$.  

\mbox{}

\noindent \textbf{$\cA$ is a $\cA'$-module, $\cB$ is a $\cB'$-module}.
$\sA$ is a $\sA^e$-module, and we claim that the action of
$\cA'\subseteq \sA^e$ preserves $\cA$.  Certainly elements
$\sx_i\otimes 1$ preserve $\cA$, and $(\sx_i \otimes 1 - 1\otimes
  \sx_i) \cdot \cA \subseteq (q-1)\cA$ because of the existence of the
  semiclassical limit.  

Recall that the action of $\sA^! \otimes _{k(q)}(\sA^!)^{\opp}$ on
$\sA^!$ is \[(\lambda \otimes \mu') \cdot x = (-1)^{|\mu||x| +
  |\mu|(|\mu|+1)/2} \lambda x \mu.\]  Clearly the action of
$(1-q)\otimes \sx_i^*$ preserves $\cB$.  The relations
(\ref{rels_in_Ashriek}) show that the  elements $\sx_i ^* \otimes 1 + 1 \otimes
\sx_i^*$ send generators of $\cB$ to $\cB$; the
general result follows because $\sx_i ^* \otimes 1 + 1 \otimes
\sx_i^*$ acts by graded derivations on $\cB$.

Let $\cC=\cA \otimes_\kq \cB$, and $e_{\cC}:\cC \to \cC$ be the map induced by left-action of
\[ \sum_i \left( (\sx_i \otimes 1) \otimes (\sx_i^* \otimes 1 + 1
  \otimes \sx_i^*) + \frac{\sx_i \otimes 1 - 1 \otimes
    \sx_i}{q-1}\otimes ((1-q)\otimes \sx_i^* ) \right) \in \cA'
\otimes \cB'\] This makes it clear that $\cC$ with differential $e_{\cC}$ is
a DGA which is a $\kq$-form of $\bar{K}_{\sA^e}(\sA)$.  Therefore all
we need to complete the proof is:

\mbox{}

\noindent \textbf{$\mathcal{C}/(q-1)\mathcal{C}$ is isomorphic as a
  DGA to  $\bar{K}_{P(A)}(A)$}.  We consider $\bar{K}_{P(A)}(A)$ as the
complex $A \otimes_k A^!$ as in the proof of
Corollary \ref{two_poisson_dgas}.  

$\bar{K}_{P(A)}(A)$ is isomorphic as an algebra to $\cC/(q-1) \cC
\cong (\cA / (q-1)\cA)\otimes (\cB/(q-1)\cB)$.
Write $\phi:
\bar{K}_{P(A)}(A) \to (\cA / (q-1)\cA)\otimes (\cB/(q-1)\cB)$ for the
algebra isomorphism that sends
$x_i \otimes 1$ to the image of $\sx_i \otimes 1$ and $1 \otimes
\Omega(y_i)^*$ to the image of $(1-q)\otimes \sx_i^*$ in
$\cC/(q-1)\cC$. We claim $\phi$ respects the
differentials on these DGAs, and since $\bar{K}_{P(A)}(A)$ is generated by the $x_i\otimes 1$ and $1 \otimes \Omega(y_i)^*$ it is enough
to check these elements.

\[ e_{P(A)} (x_i \otimes 1)=\sum_j \{x_j,x_i\} \otimes
\Omega(y_j)^* \]
\[ e_{\cC}(\sx_i \otimes 1)= \sum_j \beta(\sx_j,\sx_i) \otimes (1-q)
\sx_j^* \]
so $\phi(e_{P(A)} (x_i \otimes 1))$ equals $e_{\cC}(\phi (x_i \otimes
1)) + (q-1)\cC$.

\[e_{P(A)} (1 \otimes \Omega(y_i)^*) = - \sum_{j<k} \sum_r 
\frac{\partial\{x_j,x_k\}}{\partial x_i \partial x_r} x_r\otimes
\Omega(y_j)^* \Omega(y_k)^* \]
\begin{gather*} e_{\cC}( 1 \otimes (1-q) \sx_i^*) \equiv \sum_r \sx_r \otimes
  (1-q)(\sx_r^* \sx_i^* + \sx_i^* \sx_r^* )\\ = \sum_r \sum_{j<k}
  \frac{\partial\{x_j,x_k\}}{\partial x_i \partial x_r} \sx_r^* \otimes
  (1-q)^2 \sx_k^* \sx_j^* \end{gather*}
modulo $(q-1)\cC$.
\end{proof}

If $\alpha \in \kq$ and $M$ is a $\kq$-module we say $M$ has
$\alpha$-torsion if there is a non-zero element of $M$ annihilated by $\alpha$.

\begin{cor} \label{HH_eq_HP}
  If $H^*(\mathcal{C})$
  has no $(q-1)$-torsion then 
 $\HH^*(\sA)$  is a $q$-deformation of $\HP^*(A)$.
\end{cor}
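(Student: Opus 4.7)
The plan is to take $\mathcal{H} := H^*(\mathcal{C})$ as the candidate $\kq$-form and verify the two properties required by the definition of $q$-deformation: that $\mathcal{H}\otimes_\kq k(q) \cong \HH^*(\sA)$ and that $\mathcal{H}/(q-1)\mathcal{H} \cong \HP^*(A)$, both as (bigraded) algebras. The key input is Proposition \ref{defm_of_Kos_cocomp}, which tells us that $\mathcal{C}$ is a $\kq$-form of the DGA $\bar{K}_{\sA^e}(\sA)$ and that $\mathcal{C}/(q-1)\mathcal{C}\cong \bar{K}_{P(A)}(A)$ as DGAs. Combined with Lemmas \ref{dga_for_HH} and Corollary \ref{two_poisson_dgas}, these DGAs compute $\HH^*(\sA)$ and $\HP^*(A)$ respectively as algebras.

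For the first property, I would use that localization $-\otimes_\kq k(q)$ is exact, so it commutes with taking cohomology. Thus
\[ \mathcal{H}\otimes_\kq k(q) = H^*(\mathcal{C})\otimes_\kq k(q)\cong H^*(\mathcal{C}\otimes_\kq k(q))\cong H^*(\bar{K}_{\sA^e}(\sA)) = \HH^*(\sA), \]
and the isomorphism is compatible with the multiplicative structure inherited from the DGA.

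For the second property, the main tool is the short exact sequence of cochain complexes
\[ 0 \to \mathcal{C}\xrightarrow{q-1}\mathcal{C}\to \mathcal{C}/(q-1)\mathcal{C}\to 0, \]
which is exact on the left because $\mathcal{C}\subseteq \bar{K}_{\sA^e}(\sA)$ is torsion-free as a $\kq$-module. The associated long exact sequence reads
\[ \cdots\to H^n(\mathcal{C})\xrightarrow{q-1} H^n(\mathcal{C})\to H^n(\mathcal{C}/(q-1)\mathcal{C})\to H^{n+1}(\mathcal{C})\xrightarrow{q-1} H^{n+1}(\mathcal{C})\to \cdots \]
Under the hypothesis that $H^*(\mathcal{C})$ has no $(q-1)$-torsion, the connecting map $H^n(\mathcal{C}/(q-1)\mathcal{C})\to H^{n+1}(\mathcal{C})$ vanishes for every $n$, because its image is contained in the $(q-1)$-torsion of $H^{n+1}(\mathcal{C})$. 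Hence the natural map
\[ H^n(\mathcal{C})/(q-1)H^n(\mathcal{C})\to H^n(\mathcal{C}/(q-1)\mathcal{C}) \]
is an isomorphism, and it is an algebra homomorphism since it is induced by the DGA projection $\mathcal{C}\to \mathcal{C}/(q-1)\mathcal{C}$. Composing with the isomorphism $H^*(\mathcal{C}/(q-1)\mathcal{C})\cong \HP^*(A)$ from Proposition \ref{defm_of_Kos_cocomp} and Corollary \ref{two_poisson_dgas} completes the verification.

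There is no serious obstacle here once the hypothesis is in place; the argument is essentially a standard universal-coefficient-style calculation showing that a torsion-free $\kq$-form of a DGA yields, upon taking cohomology, a $\kq$-form of the cohomology algebra. The role of the no-$(q-1)$-torsion hypothesis is precisely to kill the Bockstein obstruction in the long exact sequence, which is the one place where the general statement could fail.
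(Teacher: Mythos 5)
Your homological argument is essentially the paper's: over the PID $\kq$ the long exact sequence you write down for $0 \to \mathcal{C}\xrightarrow{q-1}\mathcal{C}\to \mathcal{C}/(q-1)\mathcal{C}\to 0$ is exactly the universal coefficient sequence the paper invokes, and your observation that the connecting map lands in the $(q-1)$-torsion of $H^{*+1}(\mathcal{C})$ is the correct way to see why the $\Tor_1$ term dies. So the identifications $H^*(\mathcal{C})\otimes_\kq k(q)\cong \HH^*(\sA)$ and $H^*(\mathcal{C})/(q-1)H^*(\mathcal{C})\cong \HP^*(A)$ are both established correctly.

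The gap is in your choice of $\kq$-form. The paper's definition requires the form to be a $\kq$-\emph{subalgebra} of the $k(q)$-algebra in question, and $H^*(\mathcal{C})$ need not embed into $\HH^*(\sA)\cong H^*(\mathcal{C})\otimes_\kq k(q)$: although $\mathcal{C}$ is torsion-free, its cohomology can acquire $\kq$-torsion by elements coprime to $q-1$, and this torsion is killed by localization. This is not a hypothetical worry --- in the paper's quantum plane computation $H^1(\mathcal{C})$ and $H^2(\mathcal{C})$ contain summands $T_1,T_2$ of the form $\kq/([a])$, so there $H^*(\mathcal{C})\to H^*(\mathcal{C})\otimes_\kq k(q)$ is genuinely not injective. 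The repair is what the paper does: take as the form the image $H^*(\mathcal{C})\otimes 1\cong H^*(\mathcal{C})/T$, where $T$ is the full torsion ideal, and then use the no-$(q-1)$-torsion hypothesis a second time to check that this replacement does not change the reduction modulo $q-1$: each torsion summand $\kq/(f)$ has $f(1)\neq 0$, so it is annihilated by $-\otimes_\kq k$, whence $(H^*(\mathcal{C})/T)\otimes_\kq k\cong H^*(\mathcal{C})\otimes_\kq k\cong \HP^*(A)$. With that substitution your argument closes.
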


\begin{proof}
  $\cC \otimes _\kq k\cong \bar{K}_{P(A)}(A)$ so $H^*(\cC \otimes_\kq
  k)\cong \HP^*(A)$, and $\cC \otimes _\kq k(q)\cong
  \bar{K}_{\sA^e}(\sA)$ so $\HH^*(\sA)\cong H^*(\cC \otimes_\kq
  k(q))\cong H^*(\cC)\otimes_\kq k(q)$.

We first show that $\HP^r(A)$ and $\HH^r(\sA)$ have the same Hilbert
series for any $r$.  The universal coefficient theorem gives an exact
sequence of graded modules
\begin{equation}\label{univ_coeff} 
0 \to H^{rj}(\mathcal{C}) \otimes_{k[q^{ \pm 1 }]} k
 \to H^{rj}(\mathcal{C}\otimes_{k[q^{ \pm 1 }]} k) 
\to \Tor_1 ^{k[q^{ \pm 1 }]}(H^{r-1,j}(\mathcal{C}),k) \to 0
\end{equation}
for any $j$.
Because $\kq$ is a principal ideal domain,
\[ \Tor _1 ^{k[q^{ \pm 1 }]} \left(\frac{k[q^{ \pm 1 }]}{(f)},k\right) = \ker (k
\stackrel{f \cdot }\to k) \] and the Tor group in (\ref{univ_coeff})
vanishes under our hypothesis.  So for any $r,j$, $\HP^{rj}(A) \cong
H^{rj}(\cC)\otimes_\kq k$ which has the same dimension as $H^{rj}(\cC
\otimes _\kq k(q)) \cong \HH^{rj}(\sA)$.

Consider the $\kq$-form $H^*(\cC) \otimes _\kq 1$ of
$H^*(\cC)\otimes_\kq k(q)$.  This is isomorphic as a $\kq$-algebra to the
quotient of $H^*(\cC)$ by its torsion ideal $T$ (the ideal of all
elements annihilated by some $\alpha \in \kq\setminus\{0\}$).  Under our
hypothesis $-\otimes_\kq k$ kills all torsion summands of $H^*(\cC)$,
so $H^*(\cC)\otimes_\kq k \cong (H^*(\cC)/T) \otimes_\kq k$.  Thus
$\HH^*(\sA) \cong H^*(\cC)\otimes_\kq k(q)$ is a $q$-deformation of
$\HP^*(A) \cong H^*(\cC)\otimes_\kq k$ via the $\kq$-form
$H^*(\cC)\otimes 1 \cong H^*(\cC)/T$.
\end{proof}

In practise the hypothesis of Corollary \ref{HH_eq_HP} can be checked
using the following lemma.

\begin{cor}  \label{lifting}
If every
  element of $\ker ( e_{\cC}\otimes_{\kq} k)$ lifts to an element of $\ker
  e_{\cC}$, then $H^*(\mathcal{C})$ has no $(q-1)$-torsion. \end{cor}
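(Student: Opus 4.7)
The plan is to take a hypothetical $(q-1)$-torsion cohomology class and show, using the lifting hypothesis as a perturbation device, that it is in fact a coboundary. Suppose $[z] \in H^*(\cC)$ with $(q-1)[z] = 0$, so there exists $w \in \cC$ satisfying $e_\cC(w) = (q-1)z$. The strategy is to modify $w$ by a cocycle to make it divisible by $q-1$, then divide and recognise $z$ as $e_\cC$ applied to the quotient.

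The first step is the observation that $e_\cC(w) = (q-1)z$ vanishes modulo $q-1$, so the image $\bar w$ of $w$ in $\cC \otimes_\kq k$ lies in $\ker(e_\cC \otimes_\kq k)$. Applying the hypothesis, I lift $\bar w$ to some $w' \in \ker e_\cC$, which means $w - w' \in (q-1)\cC$; write $w - w' = (q-1) u$. Then $e_\cC(w) = (q-1) e_\cC(u)$, and comparing with $e_\cC(w) = (q-1) z$ yields $(q-1)(z - e_\cC(u)) = 0$ in $\cC$.

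To finish, I need to cancel $q-1$, and for this $\cC$ must be $(q-1)$-torsion-free as a $\kq$-module. Each graded component of $\cA$ and of $\cB$ is a finitely-generated torsion-free $\kq$-module, since it sits inside a finite-dimensional $k(q)$-vector space; over the principal ideal domain $\kq$ such modules are free. Hence $\cC = \cA \otimes_\kq \cB$ is a direct sum of free $\kq$-modules, in particular torsion-free. This forces $z = e_\cC(u)$, so $[z] = 0$ and $H^*(\cC)$ has no $(q-1)$-torsion.

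The argument is essentially a bookkeeping exercise once the lifting hypothesis is in hand; the only point that needs any care is justifying that $\cC$ itself is $(q-1)$-torsion-free, which is where the graded-component-by-component freeness argument used earlier (compare Lemma \ref{annoy}) is reused. There is no real obstacle beyond this observation.
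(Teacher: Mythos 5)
Your argument is correct, and it takes a genuinely different route from the paper's. You work directly at the level of elements: given a torsion class $[z]$ with $(q-1)z = e_{\cC}(w)$, you reduce $w$ modulo $q-1$, observe the reduction is a cocycle for $e_{\cC}\otimes_\kq k$, invoke the hypothesis to replace $w$ by a congruent genuine cocycle $w'$, and then divide the resulting relation $(q-1)z = (q-1)e_{\cC}(u)$ by $q-1$ using the fact that $\cC$ is $(q-1)$-torsion-free (each graded piece of $\cA$ and $\cB$ embeds in a $k(q)$-vector space, so is torsion-free and hence free over the PID $\kq$). The paper instead reduces to showing $\cC/\im e_{\cC}$ is $(q-1)$-torsion-free and argues via Smith Normal Form on each finitely generated graded piece: torsion in the cokernel is equivalent to $\rank(e_{\cC}\otimes k) < \rank e_{\cC}$, equivalently to $\ker(e_{\cC}\otimes k)$ being strictly larger than the reduction of $\ker e_{\cC}$, which the lifting hypothesis forbids. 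Your diagram chase is more elementary — it needs no normal form and no finite generation, only torsion-freeness of $\cC$ — while the paper's rank formulation makes explicit the ``rank drop modulo $q-1$'' picture that is convenient in the later explicit computations. Both proofs ultimately rest on the same freeness of the graded pieces of $\cC$, which you correctly justify.
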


\begin{proof}
  It is enough to show that $\mathcal{C}/\im e_{\cC}$ has no
  $(q-1)$-torsion under these hypotheses.  Since $\kq$ is a principal
  ideal domain, maps between free $\kq$-modules can be written in Smith
  Normal Form. It follows easily that $\mathcal{C}/\im e_{\cC}$ has
  $(q-1)$-torsion if and only if $\rank (e_{\cC} \otimes k) < \rank
  e_{\cC}$ on some graded piece, if and only if the kernel of $e_{\cC}
  \otimes k$ has rank larger than that of $e_{\cC}$ on that graded
  piece.  If every element of $\ker(e_{\cC}\otimes k)$ lifts, this
  cannot happen.
\end{proof}

\section{The quantum plane} \label{sec:quantum_plane}

In this section we illustrate some of the results of the previous sections
in the case of the quantum plane.  The Hochschild cohomology of
quantum affine space was described in \cite[\S3.3]{Sitarz} (though
it appears difficult to obtain an explicit expression for arbitrary
$n$), and it is known (\cite[Corollary 3.5.2]{LR}, \cite{Monnier})
that it agrees with the Poisson cohomology of the semiclassical limit.

The coordinate ring $\sA$ of the quantum plane as
defined in Section \ref{quant_aff_sp_ex} is Koszul since it is a
quadratic algebra with a PBW basis of polynomial type.  Therefore by
Theorem \ref{poly_pbw} and Proposition \ref{defm_of_Kos_cocomp},
$\bar{K}_{\sA^e}(\sA)$ is a $q$-deformation of $\bar{K}_{P(A)}(A)$,
where $A$ is the semiclassical limit of $\sA$.

The Koszul dual of $\sA$ is the quantum
exterior algebra $\sA^!$,
generated by $\sx^*,\sy^*$ subject to $\sx^{*2}=\sy^{*2}=0$ and
$ q\sx^* \sy ^* + \sy^* \sx^* = 0$ .

Using this and (\ref{diff}) we can compute the Koszul cocomplex $\bar{K}_{\sA^e}(\sA)$.
For $m \in \mathbb{Z}$ write $[m]$ for the $q$-integer $\frac{q^m-1}{q-1}$.  The maps in the Koszul cocomplex are
\begin{align*} e_0 ( \sy^b\sx ^a) &= [b]\sx^{a+1}\sy^n \otimes \sx^* - [a] \sx^a
\sy^{b+1}\otimes \sy^* \\
 e_1 ( \sy^b\sx^a\otimes \sx^*)& =[a-1]\sx^a \sy^{b+1}\otimes \sx^*
 \sy^*\\
 e_1 ( \sy^b\sx^a\otimes \sy^*)&=[b-1]\sx^{a+1 }\sy^{b} \otimes\sx^*
 \sy^* \end{align*}
 so, noting that $\gcd([a],[b])=[\gcd(a,b)]$ in $\kq$, 
\[ \ker e_0 = \kq\cdot 1,\,\,\, (\ker e_1)_{n+1}= \kq \left \langle \frac{ [b]
  \sy^b \sx^{n-b}\otimes \sx^* - [n-b-1]\sy^{b+1} \sx^{n-1-b}\otimes\sy^* }{[\gcd(b,n-b-1)]}
\right \rangle \]
Therefore $H^0(\cC)= \kq \cdot 1$,
\begin{align*} H^1(\cC)& \cong \kq \sx \otimes \sx^* \oplus \kq \sy
  \otimes\sy^* \oplus T_1 \\
H^2(\cC)&\cong \kq \otimes \sx^*\sy^* \oplus \kq \sy\sx\otimes\sx^*\sy^*
\oplus T_2
\end{align*}

where $T_1$ and $T_2$ are direct sums of $\kq$-modules of the form
$\kq/([a])$ for $a\in \mathbb{Z}$.  Therefore there is no $(q-1)$-torsion in $H^*(\cC)$, and 
$\HH(\sA)$ is a $q$-deformation of $\HP(A)$. Furthermore $\HH(\sA)$ is
one-dimensional in homological degree zero and two-dimensional in
degrees $1$ and $2$.

In fact, for any quantum affine space two the Koszul cocomplexes are actually isomorphic after
a change of base field: 
 $\bar{K}_{\sA^e}(\sA) \cong
  \bar{K}_{P(A)}(A)\otimes _k k(q)$ as
  complexes.

  \section{$2\times 2$ quantum matrices}\label{sec:matrices}
In this section we will compute the Poisson cohomology of $M$, showing
that it agrees with the Hochschild cohomology of $\sM$.
 In describing elements
of $\bar{K}_{P(M)}(M)$ we write $\Omega^{ij}_{kl}$ for
$\Omega(a)^{*i}\Omega(b)^{*j}\Omega(c)^{*k}\Omega(d)^{*l}$ and we omit
the $\otimes$ symbol. Similarly in $\bar{K}_{\sM^e}(\sM)$ we write $\sff^{ij}_{kl}$ for
$\sd^l\scc^k\bb^j\sa^i$ and
$\tilde\Omega^{ij}_{kl}$ for
$\tilde\Omega(\sa)^{*i}\tilde\Omega(\bb)^{*j}\tilde\Omega(\scc)^{*k}\tilde\Omega(\sd)^{*l}$
and omit the $\otimes$ symbol.

We begin by summarizing the results. 

\begin{theo}\label{HPM}
  $\HH^*(\sM)$ is a $q$-deformation of $\HP^*(M)$.  The Poisson centre
  of $M$ is polynomial in $\Delta=ad-bc$, and for $i \leq 3$, $\HP^i(M)$
  is a free $k[\Delta]$-module freely generated by the images of the
  following elements:
\begin{center}
\begin{tabularx}{\textwidth}{l|X}
 $ i$ & Representatives of generators of $\HP^i(A)$\\
\hline
 0 & $1$ \\
 1 & $a \Omega^{10}_{00}+d\Omega^{00}_{01},
 a\Omega^{10}_{00}+c\Omega^{00}_{10},b\Omega^{01}_{00}+d\Omega^{00}_{01} $ \\
2 & 
$b^j \Omega^{10}_{01},
c^k \Omega(a)^{10}_{01}$ for $j,k \neq 2$, 
$bc \Omega^{01}_{10}
- ab \Omega^{11}_{00}+ac\Omega^{10}_{10}$,
$b(a\Omega^{11}_{00} + d\Omega^{01}_{01})$, 
$c(a\Omega^{11}_{00} + d\Omega^{01}_{01})$,
$b(a\Omega^{10}_{10} + d\Omega^{00}_{11})$,
$c(a\Omega^{10}_{10} + d\Omega^{00}_{11}) $ \\
3 & $b^j\Omega^{11}_{01},
c^k\Omega^{11}_{01},
b^k\Omega^{10}_{11},
c^j\Omega^{10}_{11}$
for $j \neq 3$ and
$b^jc^k(a\Omega^{11}_{10} -d\Omega^{01}_{11})$ for $j+k=2$.
\end{tabularx} \end{center}
As a $k[\Delta]$-module, $\HP^4(M)$ is the direct sum of a trivial module
generated by $\Omega^{11}_{11}$ and free summands generated by
$bc\Omega^{11}_{11}$, $b^j
\Omega^{11}_{11}$ and $c^k\Omega^{11}_{11}$ 
 for $j,k>0$.
\end{theo}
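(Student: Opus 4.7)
The theorem has two parts: (a) the $q$-deformation claim $\HH^*(\sM)\cong\HP^*(M)\otimes k(q)$ in the sense of Corollary \ref{HH_eq_HP}, and (b) the explicit description of $\HP^*(M)$ as a $k[\Delta]$-module. For (a), the plan is to apply Corollary \ref{HH_eq_HP}: it suffices to show $H^*(\cC)$ has no $(q-1)$-torsion, and by Corollary \ref{lifting} this reduces to lifting each cocycle representing a class in $\HP^*(M)$ to a cocycle in $\cC$. Once the table in (b) is in hand, this is a direct verification; for each tabulated cocycle one replaces $a,b,c,d$ by $\sa,\bb,\scc,\sd$ and each $\Omega(y_i)^*$ by the corresponding generator of the $\kq$-form $\cB$ of $\sM^!$, adjusting coefficients by $q$-factors as needed, and checks closure using the relations of $\sM^e$ displayed in Section \ref{scl}.

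For (b), the central idea is to use the almost-homotopy of Lemma \ref{almost_homotopy} to localise the cohomology on a much smaller resonant subspace. A direct calculation from (\ref{bracket_formulas}) and the multiplication relations in $P(M)^!$ (Corollary \ref{poisson_quad_dual}) shows that the derivation $\sum_i\partial_iH_i$ acts on the generators by $a\mapsto-2a$, $d\mapsto 2d$, $b,c\mapsto0$, while $\sum_i\partial_i^*H_i^*$ acts by $\Omega(y_a)^*\mapsto 2\Omega(y_a)^*$, $\Omega(y_d)^*\mapsto-2\Omega(y_d)^*$, $\Omega(y_b)^*,\Omega(y_c)^*\mapsto0$. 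Hence $he+eh$ acts on the PBW basis element $a^ib^jc^kd^l\otimes\Omega^{\alpha\beta}_{\gamma\delta}$ as multiplication by the scalar $2(l-i+\alpha-\delta)$. Since this operator commutes with $e$, each of its eigenspaces is a subcomplex, and on every nonzero eigenspace it is an invertible scalar; standard homological algebra then yields acyclicity there. So all of $\HP^*(M)$ is supported on the resonant subspace defined by $l-i=\delta-\alpha$.

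The plan is then to compute cohomology on the resonant subspace by direct enumeration in each bi-degree. First, verify $\Delta=ad-bc$ is Poisson central (direct from (\ref{bracket_formulas})), so that $k[\Delta]$ acts on all cohomology groups; then identify $\HP^0(M)=k[\Delta]$ by describing the kernel of $e_0$ on resonant functions (the Poisson tensor has identically vanishing Pfaffian, so one expects exactly one Casimir). For each cohomological degree $r\geq 1$, verify that each tabulated representative is a cocycle by applying the Koszul differential term-by-term, and then check that $k[\Delta]$-multiples of them exhaust the cohomology classes by a dimension count in each bi-degree. The hardest step will be the top cohomology $\HP^4(M)$, where the trivial $k[\Delta]$-summand generated by $\Omega^{11}_{11}$ forces the existence of an explicit $\xi\in\bar{K}^3_{P(M)}(M)$ with $e(\xi)=\Delta\cdot\Omega^{11}_{11}$; this $\xi$ must be produced by hand from among the finitely many resonant degree-3 monomials. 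Analogous care is required in degree 2, where the exclusions $j,k\neq 2$ reflect that $b^2\Omega^{10}_{01}$ and $c^2\Omega^{10}_{01}$ become coboundaries of particular resonant expressions. The conceptual structure is clear throughout; the main obstacle is the bookkeeping on the resonant subspace.
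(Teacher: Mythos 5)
Your proposal follows essentially the same route as the paper: establish the $q$-deformation claim by lifting cocycles via Corollary \ref{lifting}, and compute $\HP^*(M)$ by using Lemma \ref{almost_homotopy} to show that $he+eh$ acts on the PBW monomial $a^ib^jc^kd^l\Omega^{i'j'}_{k'l'}$ by the scalar $2(l-l'-i+i')$, so that cohomology is concentrated on the grade-zero subcomplex, which is then handled by direct enumeration of kernels and images as free $k[\Delta]$-modules. Your eigenvalue computation and your identification of where the delicate points lie (the exclusions $j,k\neq 2$ in degree $2$, and the element $\xi$ with $e(\xi)=\Delta\,\Omega^{11}_{11}$ forcing the trivial summand in $\HP^4$) agree with the paper; what remains is only the degree-by-degree bookkeeping that constitutes the bulk of the paper's Section \ref{sec:matrices}.
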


The theorem is proved as follows.  Firstly,
$\sM$ has a PBW basis of polynomial type, so we may apply
Theorem \ref{poly_pbw} and Proposition \ref{defm_of_Kos_cocomp} to
get that $\bar{K}_{\sM^e}(\sM)$ is a $q$-deformation of
$\bar{K}_{P(M)}(M)$. 
We compute $\HP^*(M)$ directly using the Koszul
cocomplex and show that each cocycle lifts to one for
$\bar{K}_{\sM^e}(\sM)$, so that Corollary \ref{lifting} applies and
$\HH^*(\sM)$ is a $q$-deformation of $\HP^*(M)$.  This is simplified by
two observations: first, the boundary of
$d^lc^kb^ja^i\Omega^{i'j'}_{k'l'}$ lifts to that of
$\sd^l\scc^k\bb^j\sa^i\tilde\Omega^{i'j'}_{k'l'}$ so we only need to
lift non-bounding cocycles, and second $\Delta=ad-bc$ lifts to
$\Delta_q=ad-qbc$ so we only need lift a $k[\Delta]$-generating set for
the non-bounding cocycles. In practise the lifting
is always the most straightforward one possible:
$a\Omega^{10}_{00}+b\Omega^{01}_{00}$ lifts to $\sa
\tilde\Omega^{10}_{00}+\bb\tilde\Omega^{01}_{00}$ and so on.

We need to know the differentials on both Koszul cocomplexes explicitly
to check the lifting condition.  Since
the differential on
$\bar{K}_{P(M)}(M)$ is the `reduction mod $q-1$' of that for
$\bar{K}_{\sM^e}(\sM)$, we give the latter in the tables that follow.

\begin{multline*} e_0(\sff^{ij}_{kl})= ([j+k]\sff^{i+1,j}_{kl}
+
q^{-1}[2l] \sff^{i,j+1}_{k+1,l-1}
) \tilde\Omega^{10}_{00} 
+([l]-[i])( \sff^{i,j+1}_{k,l}
\tilde\Omega^{01}_{00} +
\sff^{ij}_{k+1,l}
\tilde\Omega^{00}_{10} ) 
\\- ([j+k]
\sff^{ij}_{k,l+1} +
q^{-1}[2i]
\sff^{i-1,j+1}_{k+1,l}
) \tilde\Omega^{00}_{01}\end{multline*}

\begin{center}
\begin{tabularx}{\textwidth}{l|X}
  $x$ & $e_1(\sff^{ij}_{kl} \Omega(x)^*) $ \\
\hline
$\sa$ & $ q^{-1}([i]-[l+1])( \sff_{kl}^{i,j+1} \tilde\Omega^{11}_{00}
+ \sff_{k+1,l}^{ij} \tilde\Omega^{10}_{10})+ ([j+k]\sff_{k,l+1}^{ij}  +
q^{-1}[2i]\sff_{k+1,l}^{i-1,j+1} )\tilde\Omega^{10}_{01} $ \\
$\bb$ & $([j+k-1]\sff_{kl}^{i+1,j}+q^{-2}
[2l]\sff_{k+1,l-1}^{i,j+1} )
\tilde\Omega^{11}_{00} 
+ ([i]-[l])\sff_{k+1,l}^{ij}
\tilde\Omega^{01}_{10}
- q^{i-1}[2]\sff_{k+1,l}^{ij}
\tilde\Omega^{10}_{01}
+ ( [j+k-1]\sff_{k,l+1}^{ij} +
q^{-2}[2i]\sff_{k+1,l}^{i-1,j+1} )
\tilde\Omega^{01}_{01}$ \\
$\scc$ & $ ([j+k-1]\sff_{kl}^{i+1,j} +q^{-2}
[2l]\sff_{k+1,l-1}^{i,j+1})  \tilde\Omega^{10}_{10}
+ ([l]-[i])\sff_{kl}^{i,j+1}
\tilde\Omega^{01}_{10}
- q^{l-1}[2] \sff_{kl}^{i,j+1}  \tilde\Omega^{10}_{01}
+( [j+k-1] \sff_{k,l+1}^{ij} +q^{-2}[2i]
\sff_{k+1,l}^{i-1,j+1} ) \tilde\Omega^{00}_{11}$ \\
$\sd$ & $ ([l-1]-[i]) (\sff_{kl}^{i,j+1} \tilde\Omega^{01}_{01}
+ \sff_{k+1,l}^{ij}\tilde\Omega^{00}_{11}) 
+ ([j+k]\sff_{kl}^{i+1,j} +
q^{-1}[2l]\sff_{k+1,l-1}^{i,j+1})
\tilde\Omega^{10}_{01}$
\end{tabularx}
\end{center}

\begin{center}
\begin{tabularx}{\textwidth}{l|X}
$x,y $ & $e_2(\sff_{kl}^{ij}\Omega(x)^*\Omega(y)^*) $ \\
\hline
$\sa,\bb$ & $q^{-1}([l+1]-[i]) \sff_{k+1,l}^{ij}
\tilde\Omega^{11}_{10} 
- ( [j+k-1] \sff_{k,l+1}^{ij} +
q^{-2}[2i] \sff_{k+1,l}^{i-1,j+1}) 
\tilde\Omega^{11}_{01}$ \\
$\sa, \scc$ & $
q^{-1} ([i]-[l+1]) \sff_{kl}^{i,j+1}
\tilde\Omega^{11}_{10} 
 - ( [j+k-1]\sff_{k,l+1}^{ij} +
q^{-2}[2i] \sff_{k+1,l}^{i-1,j+1}) 
\tilde\Omega^{10}_{11}$ \\
$\sa, \sd$ & $q^{-1}([i]-[l]) 
( \sff_{kl}^{i,j+1}  \tilde\Omega^{11}_{01} 
+ \sff_{k+1,l}^{ij} \tilde\Omega^{10}_{11} )$ \\
$\bb,\scc$ & $([j+k-2] \sff_{kl}^{i+1,j} + q^{-3}[2l]
\sff_{k+1,l-1}^{i,j+1}) \tilde\Omega^{11}_{10}   
- q^{l-2}[2] \sff_{kl}^{i,j+1} \tilde\Omega^{11}_{01}
+q^{i-2}[2] \sff_{k+1,l}^{ij}
\tilde\Omega^{10}_{11}
-([j+k-2] \sff_{k,l+1}^{ij} + q^{-3}[2i]
\sff_{k+1,l}^{i-1,j+1}) 
\tilde\Omega^{01}_{11}
$ \\
$\bb, \sd$ & $([j+k-1]\sff_{kl}^{i+1,j} + q^{-2}[2l]
\sff_{k+1,l-1}^{i,j+1})
\tilde\Omega^{11}_{01}
+ ([i]-[l-1]) \sff_{k+1,l}^{ij} \tilde\Omega^{01}_{11}
$ \\
$\scc, \sd$ & $([j+k-1] \sff_{kl}^{i+1,j} + q^{-2}[2l]
\sff_{k+1,l-1}^{i,j+1}) 
\tilde\Omega^{10}_{11}
+ ([l-1]-[i]) \sff_{kl}^{i,j+1} \tilde\Omega^{01}_{11}$
\end{tabularx}
\end{center}
\begin{center}
  \begin{tabularx}{\textwidth}{l|X}
    $i',j',k',l'$ & $e_3(\sff_{kl}^{ij}\tilde\Omega^{i'j'}_{k'l'}) $ 
    \\ \hline
    $1,1,1,0$ & $[j+k-2]\sff_{k,l+1}^{ij}\tilde\Omega^{11}_{11} +
    q^{-3}[2i]\sff_{k+1,l}^{i-1,j+1}\tilde\Omega^{11}_{11}$ \\
    $1,1,0,1$ &$ q^{-1}([l]-[i]) \sff_{k+1,l}^{ij} \tilde\Omega^{11}_{11}$\\
    $1,0,1,1$ &$q^{-1}([i]-[l]) \sff_{kl}^{i,j+1}\tilde\Omega^{11}_{11} $ \\
    $0,1,1,1$ & $ [j+k-2] \sff_{kl}^{i+1,j}\tilde\Omega^{11}_{11} + q^{-3}[2l]
    \sff_{k+1,l-1}^{i,j+1}\tilde\Omega^{11}_{11} $   \end{tabularx}
\end{center}

The formulas are obtained by computing directly using the definition of
the Koszul cocomplex and  the presentations
of $\sM$ and $\sM^!$ given in (\ref{m2rels}) and Example \ref{quad_dual_2x2_matrices}.

We want to find a contracting homotopy to show that certain portions of
$\bar{K}_{P(M)}(M)$ are exact.  To this end we define another grading on
$\bar{K}_{P(M)}(M)$:

\begin{defn}
The grade of $a^ib^jc^kd^l
\Omega^{i',j'}_{k',l'}\in \bar{K}_{P(M)}(M)$
is $i-i'-l+l'$.
\end{defn}

The differential $e_{P(M)}$ preserves grade, as is
easily seen from the tables above.

\begin{lem} 
%\begin{equation*}
  $(he+eh)(f^{ij}_{kl}\Omega^{i'j'}_{k'l'})
	 = 2(l-l'-i+i')f^{ij}_{kl}\Omega^{i'j'}_{k'l'}$
       %\end{equation*}
\end{lem}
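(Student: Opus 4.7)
The plan is to invoke Lemma \ref{almost_homotopy}, which gives $he+eh = D_1\otimes 1 + 1\otimes D_2$ with $D_1 = \sum_r\partial_r H_r$ on $M$ and $D_2 = \sum_r\partial_r^* H_r^*$ on $M^!$. Both sums are (graded) derivations, as shown in the proof of that lemma, so they are determined by their values on the generators $a,b,c,d$ of $M$ and $\Omega(a)^*,\Omega(b)^*,\Omega(c)^*,\Omega(d)^*$ of $M^!$. Thus it suffices to carry out two short generator calculations and extend multiplicatively.

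For $D_1$, the Poisson brackets in (\ref{bracket_formulas}) give $H_r(x) = \{x_r,x\}$ for $r,x\in\{a,b,c,d\}$; taking $\partial_r$ and summing yields $D_1(a) = \partial_b(-ab) + \partial_c(-ac) = -2a$ and $D_1(d) = \partial_b(bd) + \partial_c(cd) = 2d$, while for $x=b$ the terms $\partial_a H_a(b) = \partial_a(ab) = b$ and $\partial_d H_d(b) = \partial_d(-bd) = -b$ cancel, giving $D_1(b) = 0$, and symmetrically $D_1(c) = 0$. Extending $D_1$ as a derivation to the monomial $f^{ij}_{kl} = a^ib^jc^kd^l$ gives $D_1(f^{ij}_{kl}) = 2(l-i)f^{ij}_{kl}$.

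For $D_2$, the second family of relations in Corollary \ref{poisson_quad_dual} gives, after projecting into $M^!$ (so that the $\Omega(y_s)^*y_r^*$ term vanishes),
\[ H_r^*(\Omega(y_s)^*) \;=\; y_r^*\cdot\Omega(y_s)^* \;=\; -\sum_{i<j}\frac{\partial^2\{x_i,x_j\}}{\partial x_r\partial x_s}\,\Omega(y_i)^*\Omega(y_j)^*. \]
The only nonzero second partials come from the multiplicative brackets and the $2bc$ term of $\{a,d\}$, so each $H_r^*(\Omega(y_s)^*)$ is a single $\Omega(y_i)^*\Omega(y_j)^*$ times $-1$ or $-2$. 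Applying $\partial_r^*$ with the sign rule displayed just before Lemma \ref{almost_homotopy} and summing over $r$, I find $D_2(\Omega(a)^*)=2\Omega(a)^*$ and $D_2(\Omega(d)^*)=-2\Omega(d)^*$. For $\Omega(b)^*$ the cancellation $\partial_a^*(-\Omega(a)^*\Omega(b)^*) = -\Omega(b)^*$ and $\partial_d^*(-\Omega(b)^*\Omega(d)^*) = +\Omega(b)^*$ gives $D_2(\Omega(b)^*) = 0$ (the contribution from $H_c^*(\Omega(b)^*) = -2\Omega(a)^*\Omega(d)^*$ is killed by $\partial_c^*$), and symmetrically $D_2(\Omega(c)^*) = 0$. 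Extending $D_2$ as a graded derivation yields $D_2(\Omega^{i'j'}_{k'l'}) = 2(i'-l')\Omega^{i'j'}_{k'l'}$.

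Combining,
\[ (he+eh)(f^{ij}_{kl}\Omega^{i'j'}_{k'l'}) = \bigl(2(l-i)+2(i'-l')\bigr)f^{ij}_{kl}\Omega^{i'j'}_{k'l'} = 2(l-l'-i+i')f^{ij}_{kl}\Omega^{i'j'}_{k'l'}. \]
The main obstacle is the sign-bookkeeping for $D_2$: one has to thread the sign in the quadratic dual relation, the antisymmetry $\Omega(y_i)^*\Omega(y_j)^* = -\Omega(y_j)^*\Omega(y_i)^*$, and the $(-1)^{\sum_{j<i}b_j}$ in the definition of $\partial_r^*$. The derivation viewpoint reduces this to a finite check on four generators rather than verifying the identity directly on each of the sixteen $\Omega^{i'j'}_{k'l'}$.
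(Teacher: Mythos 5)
Your proof is correct and follows essentially the same route as the paper: both invoke Lemma \ref{almost_homotopy} and then evaluate $\sum_i \partial_i H_i$ on $M$ and $\sum_i \partial_i^* H_i^*$ on $M^!$ separately, your reduction to the four generators via the derivation property being just a cleaner organisation of the paper's ``tedious calculation.'' Incidentally, your value $H_d^*(\Omega(b)^*) = -\Omega(b)^*\Omega(d)^*$ (forced by the relation of Corollary \ref{poisson_quad_dual} applied to $\{b,d\}=bd$, and consistent with the displayed formulas for the $q$-differentials at $q=1$) is the correct one; the action table printed in the paper's proof misstates this entry as $0$, even though the stated conclusion $\sum_i\partial_i^*H_i^*(\Omega^{i'j'}_{k'l'})=2(i'-l')\Omega^{i'j'}_{k'l'}$ requires your value.
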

\begin{proof}
  This is an application of Lemma \ref{almost_homotopy}.  Computing using the bracket formulas
  (\ref{bracket_formulas}) shows
  \[ \left(\partial_aH_a + \partial_bH_b +
    \partial_cH_c + \partial_
    dH_d\right) f
    =2(l-i)f^{ij}_{kl}.\]
  Next, the second relation of Corollary \ref{poisson_quad_dual} gives
  the following description of the action of $P(M)^!$ on $M^!$ (with a
  slight abuse of notation):
  \begin{align*}
    a^* \cdot \Omega(b)^* = b^* \cdot \Omega(a)^* &=
    -\Omega(a)^*\Omega(b)^* \\ 
    a^* \cdot \Omega(c)^* = c^* \cdot \Omega(a)^* &=
    -\Omega(a)^*\Omega(c)^* \\
    a^* \cdot \Omega(d)^* = d^* \cdot \Omega(a)^* &=
   0 \\ 
b^* \cdot \Omega(c)^* = c^* \cdot \Omega(b)^* &=
    -2\Omega(a)^*\Omega(d)^* \\
b^* \cdot \Omega(d)^* = d^* \cdot \Omega(b)^* &=
   0 \\ 
c^* \cdot \Omega(d)^* = d^* \cdot \Omega(c)^* &=
    -\Omega(c)^*\Omega(d)^* 
  \end{align*}
  and that $x^* \cdot \Omega(x)^*=0$ for $x=a,b,c,d$.
  From these, a tedious calculation shows that 
  \[ \left(\partial_a^*H_a^* + \partial_b^*H^*_b +
    \partial_c^*H^*_c + \partial_
    d^*H^*_d\right) \Omega^{i'j'}_{k'l'}
    =2(i'-l')\Omega^{i'j'}_{k'l'}  .
    \]
 Putting these together gives the formula claimed. 
\end{proof}
This means that a suitable scalar multiple of $h$ is a contracting
homotopy on any summand of $\bar{K}_{P(M)}(M)$ with nonzero grade.
Write $K_0(M)$ for the subcomplex of
$\bar{K}_{P(M)}(M)$ consisting of all elements of grade zero, and let $E_n$ be the induced differential in homological
degree $n$. %: so $K_0(M)$ contains elements like $(ad)^ib^jc^k$,
Then $\HP^*(M)\cong H^*(K_0(M), E)$, so we work only with
the cocomplex $K_0(M)$. %We note that $E_*$ is considerably simpler than

\subsection{Computations}
\begin{lem}
  $\HP^0(M)=k[\Delta]$, and $\Delta$ lifts to $\Delta_q = ad-qbc$.
\end{lem}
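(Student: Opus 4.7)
First I would apply the preceding lemma: the Poisson bracket on $M$ has grade $0$, as one reads off from (\ref{bracket_formulas}), so the Poisson centre $Z(M)$ is a graded subalgebra and $\HP^0(M) = H^0(K_0(M))$ equals its grade-$0$ part. Since $\Delta = ad - bc$ has grade $0$, it is enough to prove $Z(M) = k[\Delta]$. Applying the Leibniz rule to (\ref{bracket_formulas}) gives $\{x, \Delta\} = 0$ for each generator $x \in \{a,b,c,d\}$, so $k[\Delta] \subseteq Z(M)$.

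For the reverse containment I would localize at $a$. Since $M$ is a domain, $Z(M)$ injects into $Z(M[a^{-1}])$. Substituting $d = a^{-1}(\Delta + bc)$ presents $M[a^{-1}]$ as a polynomial algebra $k[a^{\pm 1}, b, c, \Delta]$ in which $\Delta$ is Poisson-central and the subalgebra $k[a^{\pm 1}, b, c]$ retains the brackets $\{a,b\} = ab$, $\{a,c\} = ac$, $\{b,c\} = 0$. Thus $Z(M[a^{-1}]) = k[\Delta] \otimes_k Z(k[a^{\pm 1}, b, c])$. For $f = \sum f_{\alpha\beta\gamma} a^\alpha b^\beta c^\gamma$ in $k[a^{\pm 1}, b, c]$ the Leibniz rule yields $\{a, f\} = \sum (\beta+\gamma) f_{\alpha\beta\gamma} a^{\alpha+1} b^\beta c^\gamma$, so $\{a, f\} = 0$ forces $f \in k[a^{\pm 1}]$, and then $\{b, f\} = 0$ forces $f$ constant. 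Hence $Z(k[a^{\pm 1}, b, c]) = k$, giving $Z(M[a^{-1}]) = k[\Delta]$; intersecting with $M$ yields $Z(M) = k[\Delta]$ and therefore $\HP^0(M) = k[\Delta]$.

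For the lift, $\Delta_q = \sa\sd - q\bb\scc$ manifestly satisfies $\Delta_q \equiv \Delta \pmod{q-1}$. The degree-$0$ part of the differential on $\bar K_{\sM^e}(\sM)$ sends $\lambda$ to the commutator of $\lambda$ with each generator (times the corresponding dual basis element), so $\Delta_q \in \ker e_0$ is equivalent to $\Delta_q$ being central in $\sM$; this is the classical centrality of the quantum determinant, checked by a short direct calculation using (\ref{m2rels}). The main obstacle is really the reverse containment $Z(M) \subseteq k[\Delta]$: a purely monomial argument on $K_0(M)$ would require tracking recursions among the coefficients of $a^i b^j c^k d^i$, but the localization trick cleanly reduces it to a one-line monomial-degree count in $k[a^{\pm 1}, b, c]$.
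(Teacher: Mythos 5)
Your argument is correct, and the hard half — showing the Poisson centre is no bigger than $k[\Delta]$ — is done by a genuinely different route. The paper works entirely inside the grade-zero Koszul cocomplex: it writes a general element of $K_0(M)_0$ as $\sum\lambda_{ijk}(ad)^ib^jc^k$, extracts from $\{a,-\}=0$ the recurrence $2i\lambda_{i,j-1,k-1}=-(j+k)\lambda_{i-1,j,k}$, kills the coefficients with $j\neq k$ by a minimality argument, and then recognises the surviving one-parameter family of solutions $(-1)^i\binom{i+j}{i}$ as the coefficients of $\Delta^{i+j}$. Your localization at $a$ replaces all of that combinatorics: after passing to $k[a^{\pm1},b,c,\Delta]$ the bracket splits off a trivial factor $k[\Delta]$, and the centre of the remaining factor is killed by a one-line degree count (note characteristic zero is used when you conclude from $(\beta+\gamma)f_{\alpha\beta\gamma}=0$, just as the paper uses it in its recurrence). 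Your reduction from $Z(M)$ to $Z(M[a^{-1}])$ is legitimate since $\{z,a\}=0$ forces $\{z,a^{-1}\}=0$ in the domain $M[a^{-1}]$. What the paper's computation buys in exchange for the extra combinatorial work is that it stays in the formalism used for all the higher cohomological degrees and directly exhibits $\ker E_0$ inside the explicit $k[\Delta]$-basis $\{(ad)^ib^jc^k\}$ of $K_0(M)_0$, which feeds straight into the next lemma on $\im E_0$; your approach is cleaner and would generalise more readily to other quadratic brackets with a known Casimir. Your treatment of the lift is also fine and matches the paper's (unwritten) verification: in homological degree zero the differential (\ref{diff}) is $\lambda\mapsto\sum_i[x_i,\lambda]\otimes x_i^*$, so lifting $\Delta$ amounts to the centrality of the quantum determinant $\sa\sd-q\bb\scc$ in $\sM$.
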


\begin{proof}
  $K_0(M)_0$ consists of elements of the form $\sum
  \lambda_{ijk}(ad)^ib^jc^k$, and such an element is in $\ker E_0$ if and
  only if it has trivial bracket with $a$, if and only if
\begin{equation*}
	2i\lambda_{i,j-1,k-1}=-(j+k)\lambda_{i-1,j,k}
	\label{recurrence}
\end{equation*}
for all $i,j,k$, where $\lambda_{ijk}$ should be interpreted as zero if any of its
indices are negative.  If $\lambda_{ijk}$ is a solution of this
recurrence and $j>k$ then $\lambda_{ijk}=0$: if $k$ is
minimal such that there exists $j>k$ with $\lambda_{ijk}\neq 0$ then
$(j+k)\lambda_{ijk}=-2(i+1)\lambda_{i+1,j-1,k-1}=0$ contradicting
$\lambda_{ijk}\neq 0$.
By symmetry $\lambda_{ijk}=0$ if $j \neq k$.

Writing $\lambda_{ij}$ for $\lambda_{ijj}$ gives 
\begin{equation*}
	i\lambda_{i,j-1}=-j\lambda_{i-1,j}
	\label{recurrence2}
\end{equation*}
For fixed $i+j$, any solution is a scalar multiple of
$\lambda_{ij}=(-1)^i { i+j \choose i}$, thus $\sum
\lambda_{ij}(ad)^i(bc)^j$ is a polynomial in $\Delta$.  The claim about
lifting is easily verified.
\end{proof}

Therefore $E_0$ is a $k[\Delta]$-map with kernel equal to $k[\Delta]$.
As a $k[\Delta]$-module, $K_0(M)=\langle (ad)^ib^jc^k : i,j,k \geq
0\rangle$ is free on the generators $b^jc^k$, so $\im E_0$ is free on
the image of $b^jc^k$ for $j,k$ not both zero.  Using the description of
the differential above, we get:
\begin{lem}
  $\im E_0$ is freely generated by
  $b^jc^k(a\Omega^{10}_{00}-d\Omega^{00}_{01})$ for $j,k$ not both zero.
\end{lem}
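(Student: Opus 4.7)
The plan is to compute $E_0$ directly on the free $k[\Delta]$-generators $b^j c^k$ of $K_0(M)_0$ and then read off the image using the previous lemma.

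First, I would specialise the formula for $e_0$ (obtained by reducing the tabulated quantum expression modulo $q-1$, so each $[n]$ becomes $n$) to $i=l=0$. The coefficient $[l]-[i]$ then vanishes, killing the $\Omega^{01}_{00}$ and $\Omega^{00}_{10}$ contributions, and the $[2l]$, $[2i]$ factors also vanish, leaving
\[ E_0(b^j c^k) \;=\; (j+k)\, b^j c^k\,\bigl(a\Omega^{10}_{00}-d\Omega^{00}_{01}\bigr). \]
This is the only concrete calculation the argument needs.

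Next, because $k[\Delta]$ lies in the Poisson centre, $E_0$ is a homomorphism of $k[\Delta]$-modules. Combined with the already-stated freeness of $K_0(M)_0$ over $k[\Delta]$ on the monomials $b^j c^k$, this shows $\im E_0$ is the $k[\Delta]$-span of the elements $E_0(b^j c^k)$. Since $k$ has characteristic zero, $j+k$ is a unit whenever $(j,k)\neq(0,0)$, so this span coincides with the $k[\Delta]$-span of $b^j c^k(a\Omega^{10}_{00}-d\Omega^{00}_{01})$ for $(j,k)\neq(0,0)$.

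For freeness I would invoke the previous lemma, which identifies $\ker E_0 = k[\Delta]\cdot 1$, that is, the single summand corresponding to $(j,k)=(0,0)$ in the decomposition $K_0(M)_0=\bigoplus_{j,k\geq 0} k[\Delta]\cdot b^j c^k$. The first isomorphism theorem then gives a $k[\Delta]$-module isomorphism
\[ \im E_0 \;\cong\; K_0(M)_0/\ker E_0 \;=\; \bigoplus_{(j,k)\neq(0,0)} k[\Delta]\cdot b^j c^k, \]
under which the class of $b^j c^k$ is sent to $(j+k)\, b^j c^k(a\Omega^{10}_{00}-d\Omega^{00}_{01})$. Therefore $\im E_0$ is a free $k[\Delta]$-module on the proposed generators. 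There is no real obstacle here: the lemma is essentially a corollary of the previous one together with a one-line evaluation of the differential.
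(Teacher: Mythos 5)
Your proposal is correct and follows essentially the same route as the paper: the paper likewise observes that $E_0$ is a $k[\Delta]$-map on $K_0(M)_0=\bigoplus_{j,k}k[\Delta]\,b^jc^k$ with kernel the $(j,k)=(0,0)$ summand, so that $\im E_0$ is free on the images of the remaining $b^jc^k$, and then evaluates $E_0(b^jc^k)=(j+k)b^jc^k(a\Omega^{10}_{00}-d\Omega^{00}_{01})$ from the tabulated differential. Your explicit appeal to characteristic zero to discard the unit $j+k$ is a worthwhile detail the paper leaves implicit.
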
 
In the above Lemma and from now on, the terms `free
generators' and `freely generated'  refer to the $k[\Delta]$-module
structure.

\begin{lem}
  $\ker E_1$ is freely generated by
  $b\Omega^{01}_{00}+a\Omega^{10}_{00}$,
  $a\Omega^{10}_{00}+c\Omega^{00}_{10}$ and all
  $b^jc^k(a\Omega^{10}_{00}-d\Omega^{00}_{01})$ with $j,k \geq 0$.
\end{lem}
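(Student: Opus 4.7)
The plan is to solve the linear system $E_1(\omega) = 0$ directly on the grade-zero piece of the Koszul cocomplex. The grade-zero condition in homological degree $1$ forces each basis monomial to one of four shapes: $a^{l+1}b^jc^kd^l\Omega^{10}_{00}$, $(ad)^l b^jc^k\Omega^{01}_{00}$, $(ad)^l b^jc^k\Omega^{00}_{10}$, and $a^l b^jc^k d^{l+1}\Omega^{00}_{01}$. I would write $\omega$ as a general $k$-linear combination with scalar coefficients $\alpha_{jkl}, \beta_{jkl}, \gamma_{jkl}, \delta_{jkl}$ and compute $E_1(\omega)$ using the $q = 1$ specialisation of the $e_1$ table. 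A crucial simplification is that every prefactor of the form $[I] - [L]$ or $[I] - [L \pm 1]$ vanishes at $q = 1$ because of the grade-zero relation between $I$ and $L$, so $\alpha$ and $\delta$ contribute only to $\Omega^{10}_{01}$ while $\beta$ and $\gamma$ each contribute to only three of the six degree-two $\Omega$-types.

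Setting the coefficient of each basis vector of $K_0(M)_2$ to zero produces three sets of scalar equations: a two-term recurrence in $\beta$ alone from $\Omega^{11}_{00}$ (repeated verbatim by $\Omega^{01}_{01}$), the same recurrence in $\gamma$ from $\Omega^{10}_{10}$ and $\Omega^{00}_{11}$, and a coupled recurrence from $\Omega^{10}_{01}$ involving $\beta$, $\gamma$, and the sum $\eta_{jkl} := \alpha_{jkl} + \delta_{jkl}$; the difference $\alpha - \delta$ is unconstrained and the $\Omega^{01}_{10}$ coefficient vanishes automatically. The $\beta$-recurrence $(J+K-1)\beta_{JKL} + 2(L+1)\beta_{J-1,K-1,L+1} = 0$ (with negatively indexed $\beta$'s read as zero) forces $\beta_{JKL} = 0$ unless $|J - K| = 1$, and expresses the surviving values in terms of two a priori free families $\beta_{10L}$ and $\beta_{01L}$; likewise for $\gamma$.

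The next step is to feed these into the coupled recurrence. Two non-obvious cancellations occur: the equation at $(J,K,L) = (0,2,L)$ combined with the one at $(1,3,L-1)$ and the $\beta$-recurrence forces $\beta_{01L} = 0$ for all $L$, and a symmetric analysis kills $\gamma_{10L}$, leaving $\beta_{10L}$ and $\gamma_{01L}$ as the only surviving $\beta, \gamma$-parameters. For these the coupled equation uniquely determines $\eta$, and the $\beta_{10L}, \gamma_{01L}$ families give rise to the $k[\Delta]$-orbits of the two named generators $b\Omega^{01}_{00}+a\Omega^{10}_{00}$ and $a\Omega^{10}_{00}+c\Omega^{00}_{10}$. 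When $\beta = \gamma = 0$, the homogeneous coupled recurrence forces $\eta \equiv 0$ by a descending induction on $L$ starting from the boundary condition at $L = 0$, so the only residual freedom is $\alpha - \delta$, which produces the family $(ad)^l b^j c^k (a\Omega^{10}_{00} - d\Omega^{00}_{01})$; using $(ad)^l = (\Delta + bc)^l$ this reorganises as the free $k[\Delta]$-orbit of $b^j c^k (a\Omega^{10}_{00} - d\Omega^{00}_{01})$ for each $j, k \geq 0$. I would close the proof by checking directly that each of the three named generators lies in $\ker E_1$ and arguing $k[\Delta]$-independence by comparing leading monomials.

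The main obstacle is the coupled recurrence from $\Omega^{10}_{01}$: the cancellations that kill the $\beta_{01L}$ and $\gamma_{10L}$ families are not visible from any single index triple but emerge from matching equations at $(0,2,L)$ against $(1,3,L-1)$. Correctly distinguishing which index combinations give genuine constraints, which are trivially satisfied, and which merely reparameterise $\alpha - \delta$ via a family-(3) element is the bookkeeping bottleneck.
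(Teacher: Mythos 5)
Your proposal is correct and attacks the lemma exactly as the paper does --- by writing down a general element of the grade-zero piece $K_0(M)_1$ and solving the linear system $E_1=0$ component by component --- but in a different basis, which changes the shape of the bookkeeping. The paper uses from the outset that $K_0(M)_1$ and $K_0(M)_2$ are free $k[\Delta]$-modules (on $ab^jc^k\Omega^{10}_{00}$, $b^jc^k\Omega^{01}_{00}$, etc.), takes the unknown coefficients to be polynomials $p_{jk}(\Delta),q_{jk}(\Delta),r_{jk}(\Delta),s_{jk}(\Delta)$, and reads off $q_{jk}=r_{jk}=0$ for $j+k\neq 1$ in one line from the $\Omega^{11}_{00}$ and $\Omega^{10}_{10}$ components; your scalar coefficients $\beta_{JKL}$ attached to $(ad)^Lb^Jc^K$ recover precisely this as the support condition $|J-K|=1$ only after solving the two-term recurrence down the diagonals, the two parameterisations being related by $\Delta^lb^jc^k=(ad-bc)^lb^jc^k$, which smears a single $q_{jk}(\Delta)$ along the diagonal $J-K=j-k$. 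Two remarks on the remaining step. First, killing $\beta_{01L}$ genuinely requires the whole chain of coupled $\Omega^{10}_{01}$-equations at $(m,m+2,L-m)$ for $0\leq m\leq L$, terminating because $\eta_{\cdot,\cdot,-1}=0$, not just the two instances you quote; you rightly flag this as the bottleneck, and it does close. Second, your explicit treatment here is in fact more careful than the paper's at the corresponding point: the paper asserts that the coefficient of $c^2\Omega^{10}_{01}$ is (up to sign) $2q_{01}(\Delta)$ and concludes $q_{01}=0$, but that coefficient also receives a contribution $2\Delta\bigl(p_{02}(\Delta)+s_{02}(\Delta)\bigr)$, and one must run up the diagonal $K-J=2$ and invoke finite support to eliminate it --- which is exactly the chase you describe. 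So your route is equivalent, slightly more laborious in the first elimination, and slightly more honest in the second.
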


\begin{proof}
  $K_0(M)_1$ is spanned by elements of the form
  $(ad)^iab^jc^k\Omega^{10}_{00}$,
  $(ad)^ib^jc^k\Omega^{01}_{00}$, $(ad)^ib^jc^k\Omega^{00}_{10}$,
  $(ad)^idb^jc^k\Omega^{00}_{01}$ and is therefore freely generated by
  the elements $ab^jc^k\Omega^{10}_{00}$, $b^jc^k\Omega^{01}_{00}$,
  $b^jc^k\Omega^{00}_{10}$, $db^jc^k\Omega^{00}_{01}$. The images under
  $E_1$ of these free generators are
  \begin{align*}
E_1ab^jc^k\Omega^{10}_{00}=E_1db^jc^k\Omega^{00}_{01} 
  &= ((j+k+2)b^{j+1}c^{j+1} + (j+k)\Delta b^jc^k )
    \Omega^{10}_{01}\\ 
    E_1 b^jc^k\Omega^{01}_{00}& =
    (j+k-1)b^jc^k(a\Omega^{11}_{00}+d\Omega^{01}_{01})-2b^jc^{k+1}\Omega^{10}_{01} \\
    E_1 b^jc^k\Omega^{00}_{10} &=
    (j+k-1)b^jc^k(a\Omega^{10}_{10}+d\Omega^{00}_{11})-2b^{j+1}c^{k}\Omega^{10}_{01} 
  \end{align*}
  Therefore if $p_{jk},q_{jk},r_{jk},s_{jk}$ are polynomials, the image
  of \begin{equation}\label{would_be_kernel_element} \sum p_{jk}(\Delta)ab^jc^k \Omega^{10}_{00} +
    q_{jk}(\Delta)b^jc^k \Omega^{01}_{00}+r_{jk}(\Delta)b^jc^k
    \Omega^{00}_{10}+s_{jk}(\Delta)db^jc^k
    \Omega^{00}_{01}\end{equation}
    is
    \begin{multline*}
      \sum
      (j+k-1)q_{jk}(\Delta)(ab^jc^k\Omega^{11}_{00}+db^jc^k\Omega^{01}_{01})
      + \sum
      (j+k-1)r_{jk}(\Delta)(ab^jc^k\Omega^{10}_{10}+db^jc^k\Omega^{00}_{11})
      \\
      +\sum (p_{jk}(\Delta)+s_{jk}(\Delta) ) (
      (j+k+2)b^{j+1}c^{k+1} + (j+k)\Delta b^jc^k )\Omega^{10}_{01} \\ 
      - \sum (2q_{jk}(\Delta)b^{j}c^{k+1} + 2r_{jk}(\Delta)b^{j+1}c^k)
      \Omega^{10}_{01}
    \end{multline*}

    The terms $ab^jc^k\Omega^{10}_{10}$ are free generators of the
    $\Omega^{10}_{10}$ component of $K_0(M)_2$, and so the only way for
    the $\Omega^{10}_{10}$ component of this expression to vanish is if
    $r_{jk}=0$ for $j+k \neq 1$.  Similarly we must have $q_{jk}=0$ for
    $j+k \neq 1$.

    The coefficient of $c^2\Omega^{10}_{01}$ in this sum is
    $2q_{01}(\Delta)c^2\Omega^{10}_{01}$, therefore $q_{01}=0$, and
    similarly $r_{10}=0$.  The condition for
    (\ref{would_be_kernel_element}) to be in $\ker E_1$ is therefore
    that $r_{jk}=q_{jk}=q_{01}=r_{10}$ for $j+k \neq 1$ and
    \begin{align*}
      -q_{10}-r_{01} + p_{00}+ s_{00} &= 0 \\
      p_{jk}+s_{jk} &= 0
    \end{align*}
    for $j,k$ not both zero.  This is equivalent to our claim about the
    kernel generators.
\end{proof}

We say an element $\sum
\alpha_{ijkl}^{i'j'k'l'}f^{ij}_{kl}\Omega^{i'j'}_{k'l'}$ of $\ker E_*$ \textbf{lifts
trivially} if $\sum
\alpha_{ijkl}^{i'j'k'l'}\sff^{ij}_{kl}\tilde\Omega^{i'j'}_{k'l'}$ is a
cocycle for $\bar{K}_{\sM^e}(\sM)$.

\begin{cor}
  $\HP^1(M)$ is freely generated by the images of
  $b\Omega^{01}_{00}+a\Omega^{10}_{00}$,
  $a\Omega^{10}_{00}+c\Omega^{00}_{10}$ and
  $a\Omega^{10}_{00}-d\Omega^{00}_{01}$. Each of these elements lifts trivially
  to an element of $\HH^1(\sM)$.
\end{cor}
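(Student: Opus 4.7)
The plan is to combine the two preceding lemmas to read off $\HP^1(M)$ as a quotient of free $k[\Delta]$-modules, and then to verify by direct substitution into the $e_1$-table for $\bar K_{\sM^e}(\sM)$ that the obvious lifts of the resulting classes are cocycles.

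First I will compute $\HP^1(M)=\ker E_1/\im E_0$. The kernel lemma presents $\ker E_1$ as a free $k[\Delta]$-module on the basis consisting of $a\Omega^{10}_{00}+b\Omega^{01}_{00}$, $a\Omega^{10}_{00}+c\Omega^{00}_{10}$ and the family $b^jc^k(a\Omega^{10}_{00}-d\Omega^{00}_{01})$ for $j,k\geq 0$. The image lemma identifies $\im E_0$ as the free $k[\Delta]$-submodule on the sub-family with $(j,k)\neq(0,0)$. Because the image-generators form part of the kernel-basis, the quotient is free on the three remaining basis elements, which are precisely those named in the corollary (the third one being the $(j,k)=(0,0)$ case of the family).

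To verify that each class lifts trivially, I take as candidate lifts $\sa\tilde\Omega^{10}_{00}+\bb\tilde\Omega^{01}_{00}$, $\sa\tilde\Omega^{10}_{00}+\scc\tilde\Omega^{00}_{10}$ and $\sa\tilde\Omega^{10}_{00}-\sd\tilde\Omega^{00}_{01}$, and I compute $e_1$ of each using the table. Specialising the $\sa$-row at $(i,j,k,l)=(1,0,0,0)$ annihilates the $\tilde\Omega^{11}_{00}$ and $\tilde\Omega^{10}_{10}$ components because $[i]-[l+1]=[1]-[1]=0$, leaving only $q^{-1}[2]\,\scc\bb\,\tilde\Omega^{10}_{01}$. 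Analogous evaluations of the $\bb$-, $\scc$- and $\sd$-rows at $(0,1,0,0)$, $(0,0,1,0)$ and $(0,0,0,1)$ each produce only a $\tilde\Omega^{10}_{01}$-component, of the form $\pm q^{-1}[2]\,\scc\bb\,\tilde\Omega^{10}_{01}$; the signs are arranged so that the surviving pair of terms cancels in each of the three chosen combinations. Hence every generator of $\HP^1(M)$ lifts to a cocycle in $\bar K_{\sM^e}(\sM)$, and Corollary~\ref{lifting} applied in cohomological degree one shows that the corresponding classes in $\HH^1(\sM)$ give the trivial lifts as required.

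There is no real obstacle here: the first part is simply a comparison of the generating sets produced by the two preceding lemmas, and the second part is a bookkeeping exercise tracking the $q$-integer $[2]=1+q$ and the powers of $q^{-1}$ in the $e_1$-table at the four basis lifts above. Care is only needed to ensure that the vanishing factors $[i]-[l]$, $[i]-[l+1]$ and $[j+k]$ are read off correctly at each evaluation point.
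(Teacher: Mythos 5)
Your proposal is correct and follows the same route as the paper: the first claim is read off by comparing the free $k[\Delta]$-bases of $\ker E_1$ and $\im E_0$ from the two preceding lemmas, and the lifting claim is checked by evaluating the $e_1$-table at the four basis monomials, where the surviving $\pm q^{-1}[2]\,\scc\bb\,\tilde\Omega^{10}_{01}$ terms cancel in each of the three combinations exactly as you describe. No gaps.
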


\begin{proof}
  The first part follows immediately from the previous two lemmas.  The
  second follows from the description of the differential in
  $\bar{K}_{\sM^e}(\sM)$ given earlier.
\end{proof}

\begin{lem}
  $\im E_1$ is freely generated by $c^2\Omega^{10}_{01}$,
  $b^2\Omega^{10}_{01}$, $(b^{j+1}c^{k+1} +\frac{j+k}{j+k+2}\Delta b^jc^k
  )\Omega^{10}_{01}$ for $j,k \geq 0$ and for $j+k\neq 1$,
  \begin{gather*}
    b^jc^k(a\Omega^{11}_{00}+d\Omega^{01}_{01})-\frac{2}{j+k-1}\Omega^{10}_{01}
    \\
    b^jc^k(a\Omega^{10}_{10}+d\Omega^{00}_{11})-\frac{2}{j+k-1}\Omega^{10}_{01}.
  \end{gather*} 
\end{lem}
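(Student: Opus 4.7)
The plan is to compute $\im E_1$ as the $k[\Delta]$-span of $E_1$ applied to a free $k[\Delta]$-basis of $K_0(M)_1$. Since $K_0(M)_1$ is free over $k[\Delta]$ on the four families $ab^jc^k\Omega^{10}_{00}$, $b^jc^k\Omega^{01}_{00}$, $b^jc^k\Omega^{00}_{10}$, $db^jc^k\Omega^{00}_{01}$ for $j,k\geq 0$, the image $\im E_1$ is the $k[\Delta]$-module generated by their $E_1$-images. These images have already been written out in the proof of the preceding lemma, so essentially no new computation is required; the work is organisational.

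First I would exploit the coincidence $E_1(ab^jc^k\Omega^{10}_{00}) = E_1(db^jc^k\Omega^{00}_{01})$ to halve the generating set. Dividing by the nonzero scalar $j+k+2$ produces the family $(b^{j+1}c^{k+1} + \frac{j+k}{j+k+2}\Delta b^jc^k)\Omega^{10}_{01}$ for $j,k\geq 0$. Next, for the families $E_1 b^jc^k\Omega^{01}_{00}$ and $E_1 b^jc^k\Omega^{00}_{10}$, the scalar $j+k-1$ is invertible in $k$ whenever $j+k\neq 1$, so dividing through gives the two asserted families. When $j+k = 1$ the leading $a\Omega^{11}_{00}+d\Omega^{01}_{01}$ and $a\Omega^{10}_{10}+d\Omega^{00}_{11}$ terms drop out, leaving only scalar multiples of $c^2\Omega^{10}_{01}$, $bc\Omega^{10}_{01}$, $b^2\Omega^{10}_{01}$; the middle of these is redundant with the $(j,k)=(0,0)$ member of the first family, while $b^2\Omega^{10}_{01}$ and $c^2\Omega^{10}_{01}$ give the two extra singleton generators listed in the statement.

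For freeness I would use that $k[\Delta]$ is a PID and $K_0(M)_2$ is a free $k[\Delta]$-module, so its submodule $\im E_1$ is automatically free; it only remains to check that the listed generators are $k[\Delta]$-linearly independent. Projection onto the $\Omega^{11}_{00}$- and $\Omega^{10}_{10}$-components of $K_0(M)_2$ decouples the two relative-form families from each other and from the pure $\Omega^{10}_{01}$-generators, reducing independence to a monomial comparison in each isolated component. The main obstacle I expect is the bookkeeping in the $\Omega^{10}_{01}$-sector: one must confirm that the tails $-\frac{2}{j+k-1}b^jc^{k+1}\Omega^{10}_{01}$ and $-\frac{2}{j+k-1}b^{j+1}c^k\Omega^{10}_{01}$ of the relative-form generators, together with the first family and the two singletons $b^2\Omega^{10}_{01}$, $c^2\Omega^{10}_{01}$, fail to admit any coincidental $k[\Delta]$-linear dependency. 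This reduces to a finite check in each total degree in $b,c$, which is straightforward since the first family contributes a term $b^{j+1}c^{k+1}$ not present in the tails of the other families.
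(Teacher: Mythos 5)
Your proposal is correct, and the generating half of it coincides with the paper's: both arguments apply the explicit formulas for $E_1$ on the free $k[\Delta]$-basis of $K_0(M)_1$ recorded in the preceding lemma, discard the $d$-family via $E_1(ab^jc^k\Omega^{10}_{00})=E_1(db^jc^k\Omega^{00}_{01})$ and the duplicate $bc\,\Omega^{10}_{01}$, and rescale by the nonzero constants $j+k+2$ and $j+k-1$. Where you genuinely diverge is in how freeness is obtained. The paper extracts it from the kernel computation: the generators of $\ker E_1$ found in the previous lemma are unitriangular combinations of the standard free generators of $K_0(M)_1$, so adjoining the complementary standard generators ($ab^jc^k\Omega^{10}_{00}$ for all $j,k$; $b^jc^k\Omega^{01}_{00}$ and $b^jc^k\Omega^{00}_{10}$ for $j+k\neq 1$; $c\Omega^{01}_{00}$; $b\Omega^{00}_{10}$) yields a free complement of $\ker E_1$, on which $E_1$ is injective; hence $\im E_1$ is automatically free on the images of that complement, with no independence check needed. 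You instead verify $k[\Delta]$-independence of the final list directly by projecting onto components, which works: once the $\Omega^{11}_{00}$- and $\Omega^{10}_{10}$-projections force the coefficients of the two relative-form families to vanish, their $\Omega^{10}_{01}$-tails vanish with them, and the residual check involves only the first family and the singletons $b^2\Omega^{10}_{01}$, $c^2\Omega^{10}_{01}$, settled by the leading terms $b^{j+1}c^{k+1}$ via descending induction along the diagonals $j-k=\mathrm{const}$. The trade-off is that your route is self-contained but redoes work the kernel lemma already encodes, while the paper's is a one-line consequence of that lemma. One small quibble: the appeal to ``submodules of free modules over a PID are free'' is beside the point, since abstract freeness of $\im E_1$ does not certify your particular generating set as a basis --- what matters is exactly the generation-plus-independence verification you go on to perform.
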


\begin{proof}
From our description of $\ker E_1$ it follows that $\im E_1$ is freely
generated by the images of $ab^jc^k\Omega^{10}_{00}$ for any $j,k$,
$b^jc^k\Omega^{01}_{00}$ and $b^jc^k\Omega^{00}_{10}$ for $j+k \neq 1$
and $c\Omega^{01}_{00}$ and $b\Omega^{00}_{10}$.  These images are
non-zero scalar multiples of the given elements.
\end{proof}

\begin{lem}
  $\ker E_2$ is freely generated by
  $b^kc^k(a\Omega^{11}_{00}+d\Omega^{01}_{01})$,
  $b^jc^k(a\Omega^{10}_{10}+d\Omega^{00}_{11})$,
  $b^jc^k\Omega^{10}_{01}$ for $j,k \geq 0$ and
  $bc\Omega^{01}_{10}-ab\Omega^{11}_{00} + ac\Omega^{10}_{10}$.
\end{lem}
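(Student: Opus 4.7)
The plan is to mimic the strategy used for $\ker E_1$: exhibit free $k[\Delta]$-generators for $K_0(M)_2$, apply $E_2$ termwise using the differential tables, and read off the kernel by equating coefficients of generators of the codomain.

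First I identify a $k[\Delta]$-basis for $K_0(M)_2$. The grade-zero condition $i-i'-l+l'=0$ forces $i=l+1$ for $\Omega^{11}_{00},\Omega^{10}_{10}$; $i=l$ for $\Omega^{10}_{01},\Omega^{01}_{10}$; and $i=l-1$ for $\Omega^{01}_{01},\Omega^{00}_{11}$. Since $a^ib^jc^kd^l = (ad)^l b^jc^k a^{i-l}$ when $i\geq l$ (and symmetrically in the other case), $K_0(M)_2$ is a free $k[\Delta]$-module on the six families
\[ ab^jc^k\Omega^{11}_{00},\ ab^jc^k\Omega^{10}_{10},\ b^jc^k\Omega^{10}_{01},\ b^jc^k\Omega^{01}_{10},\ db^jc^k\Omega^{01}_{01},\ db^jc^k\Omega^{00}_{11} \]
for $j,k\geq 0$. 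Next I compute $E_2$ on each family from the table for $e_2$ and its reduction modulo $q-1$, keeping only the grade-zero output, which lies in $K_0(M)_3$ spanned by $a b^jc^k\Omega^{11}_{10}$, $a b^jc^k\Omega^{11}_{01}$, $a b^jc^k\Omega^{10}_{11}$, $b^jc^k\Omega^{11}_{01}$, $b^jc^k\Omega^{10}_{11}$, $d b^jc^k\Omega^{11}_{01}$, $d b^jc^k\Omega^{10}_{11}$, $d b^jc^k\Omega^{01}_{11}$ up to $k[\Delta]$-action.

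I then write a general element
\[ X = \sum_{j,k}\bigl( p_{jk}a b^jc^k\Omega^{11}_{00} + q_{jk}a b^jc^k\Omega^{10}_{10} + r_{jk} b^jc^k\Omega^{10}_{01}+ s_{jk} b^jc^k\Omega^{01}_{10} + t_{jk}d b^jc^k\Omega^{01}_{01} + u_{jk}d b^jc^k\Omega^{00}_{11}\bigr) \]
with polynomial coefficients in $\Delta$, apply $E_2$, and impose $E_2(X)=0$. Inspection of the $\Omega^{11}_{10}$-component of the image shows it receives contributions only from the $s_{jk}$ and from $p_{jk}-t_{jk}$, $q_{jk}-u_{jk}$ (via the $(\sa,\bb)$ and $(\sa,\scc)$ rows taken mod $q-1$, which become $(l+1-i)=0$ on the $p,t$ pair when combined properly). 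Freeness in degree three then forces $p_{jk}=t_{jk}$, $q_{jk}=u_{jk}$ and isolates $s_{jk}$ to lie in the one-parameter family producing $bc\Omega^{01}_{10}-ab\Omega^{11}_{00}+ac\Omega^{10}_{10}$. The $\Omega^{11}_{01}$- and $\Omega^{10}_{11}$-components then pin down $r_{jk}$ to be arbitrary (giving the $b^jc^k\Omega^{10}_{01}$ family) while the remaining conditions on $p_{jk}$ and $q_{jk}$ collapse to zero, so the combinations $p_{jk}(a\Omega^{11}_{00}+d\Omega^{01}_{01})b^jc^k$ and $q_{jk}(a\Omega^{10}_{10}+d\Omega^{00}_{11})b^jc^k$ are freely in the kernel.

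The main obstacle is bookkeeping: several output monomials receive contributions from multiple input families (in particular the coupling of $s_{jk}$ with $p_{jk}$ and $q_{jk}$ through the $\Omega^{11}_{10}$ and $\Omega^{01}_{11}$ rows), so care is needed to verify that the single extra generator $bc\Omega^{01}_{10}-ab\Omega^{11}_{00}+ac\Omega^{10}_{10}$ really accounts for all the syzygies between these three families and that no hidden relation produces a spurious $\Delta$-torsion generator. Once this is checked, the listed elements span a free $k[\Delta]$-submodule (the leading $\Omega$-components are distinct basis elements, so no nontrivial $k[\Delta]$-combination can vanish), completing the claim.
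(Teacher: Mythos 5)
Your proposal is correct and follows essentially the same route as the paper: list the free $k[\Delta]$-generators of $K_0(M)_2$, apply $E_2$ termwise from the differential tables, and solve the resulting coefficient equations, the upshot being that the $\Omega^{10}_{01}$ family is killed outright, the $\Omega^{11}_{00}/\Omega^{01}_{01}$ and $\Omega^{10}_{10}/\Omega^{00}_{11}$ families pair up, and the $\Omega^{01}_{10}$ family survives only for $j=k=1$. The one bookkeeping point your sketch should make explicit is how $s_{jk}$ gets cut down from $j+k=2$ to $j=k=1$: the $\Omega^{11}_{10}$ (or $\Omega^{01}_{11}$) component only gives $(j+k-2)s_{jk}=0$, and one must additionally observe that the $b^{3}\Omega^{11}_{01}$ and $c^{3}\Omega^{10}_{11}$ coefficients cannot be cancelled by the other families, which eliminates $s_{20}$ and $s_{02}$.
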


\begin{proof}
  $E_2$ kills $b^jc^k\Omega^{10}_{01}$ and acts on the other 
  free generators of $K_0(M)_2$ as follows:
  \begin{align*}
    E_2ab^jc^k\Omega^{11}_{00}=-E_2db^jc^k\Omega^{01}_{01} &=(
    (j+k+1)b^{k+1}c^{k+1} + (j+k-1)\Delta b^jc^k) \Omega^{11}_{01} \\
    E_2ab^jc^k\Omega^{10}_{10}=-E_2db^jc^k\Omega^{00}_{11} &=(
    (j+k+1)b^{k+1}c^{k+1} + (j+k-1)\Delta b^jc^k) \Omega^{10}_{11} \\
    E_2 b^jc^k\Omega^{01}_{10} &=
    (j+k-2)b^jc^k(a\Omega^{11}_{10}-d\Omega^{01}_{11})+2b^jc^k(c\Omega^{10}_{11}-b\Omega^{11}_{01})
  \end{align*}
As in the calculation of $\ker E_1$ we see by considering the
$\Omega^{11}_{10}$ term that a kernel element of the form
\[ \sum p_{jk}(\Delta)ab^jc^k\Omega^{11}_{00} + q_{jk}(\Delta)ab^jc^k\Omega^{10}_{10}
  +
  r_{jk}(\Delta)b^jc^k\Omega^{01}_{10}+s_{jk}(\Delta)db^jc^k\Omega^{01}_{01}+t_{jk}(\Delta)db^jc^k\Omega^{00}_{11}\]
  must have $r_{jk}=0$ unless $j+k=2$, and since the
  $b^3\Omega^{11}_{01}$ and $c^3\Omega^{10}_{11}$ coefficients must
  vanish $r_{jk}=0$ unless $j=k=1$.
  The condition for an element obeying these restrictions on
  $r_{jk}$ to lie in
the kernel is that 
\begin{alignat*}{2}
  r_{11}+q_{01}-t_{01}&=0 &\quad \quad -r_{11}+p_{10}-s_{10} &=0 \\
  p_{jk}-s_{jk} &=0 &  q_{lm}-r_{lm} &= 0
\end{alignat*}
for $(j,k)\neq (1,0)$ and $(l,m)\neq(0,1)$. It follows that the kernel
is as claimed.
\end{proof}

\begin{cor}
  $\HP^2(M)$ is freely generated by the images of  \begin{gather*} b(a\Omega^{11}_{00}+d\Omega^{01}_{01}), b(a
    \Omega^{10}_{10}+d\Omega^{00}_{11}), c(a\Omega^{11}_{00}+d\Omega^{01}_{01}), c(a
    \Omega^{10}_{10}+d\Omega^{00}_{11}),\\
    bc\Omega^{01}_{10}-ab\Omega^{11}_{00}+ac\Omega^{10}_{10},
    b^r\Omega^{10}_{01}, c^r\Omega^{10}_{01}\end{gather*}
  for $r\neq 2$. Each of these elements lifts trivially to an element of
  $\HH^2(\sM)$.
\end{cor}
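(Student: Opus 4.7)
The plan is to compute $\HP^2(M)=\ker E_2/\im E_1$ directly from the free $k[\Delta]$-generating sets produced by the two preceding lemmas, and then verify that each of the listed representatives lifts to a cocycle for $\bar{K}_{\sM^e}(\sM)$ in the most straightforward way.

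First I would eliminate the families of kernel generators $b^jc^k(a\Omega^{11}_{00}+d\Omega^{01}_{01})$ and $b^jc^k(a\Omega^{10}_{10}+d\Omega^{00}_{11})$ against the two families of image generators
\[ b^jc^k(a\Omega^{11}_{00}+d\Omega^{01}_{01})-\tfrac{2}{j+k-1}b^jc^{k+1}\Omega^{10}_{01}, \qquad b^jc^k(a\Omega^{10}_{10}+d\Omega^{00}_{11})-\tfrac{2}{j+k-1}b^{j+1}c^{k}\Omega^{10}_{01}, \]
which exist for $j+k\neq 1$. Modulo $\im E_1$ these reduce every such kernel element with $j+k\neq 1$ to a $k[\Delta]$-multiple of an $\Omega^{10}_{01}$-generator, so only the cases $j+k=1$ remain, giving four of the five mixed generators listed. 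Since no generator of $\im E_1$ involves $\Omega^{01}_{10}$, the remaining kernel generator $bc\Omega^{01}_{10}-ab\Omega^{11}_{00}+ac\Omega^{10}_{10}$ survives independently, giving the fifth.

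I would then analyse the $b^jc^k\Omega^{10}_{01}$-component against the three remaining $\im E_1$-families $b^2\Omega^{10}_{01}$, $c^2\Omega^{10}_{01}$ and $b^{j+1}c^{k+1}\Omega^{10}_{01}+\tfrac{j+k}{j+k+2}\Delta\,b^jc^k\Omega^{10}_{01}$. At $j=k=0$ the last relation gives $bc\Omega^{10}_{01}\equiv 0$, and iterating it simultaneously decrements both exponents by one, multiplying by a nonzero scalar in $k[\Delta]$ at each step until one index reaches zero. Thus $b^jc^k\Omega^{10}_{01}$ with $j,k\geq 1$ reduces to zero when $j=k$ or when $|j-k|=2$ (landing on the killed $b^2$ or $c^2$), and to a nonzero $k[\Delta]$-multiple of $b^{|j-k|}\Omega^{10}_{01}$ or $c^{|j-k|}\Omega^{10}_{01}$ otherwise. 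The surviving $\Omega^{10}_{01}$-classes are therefore $b^r\Omega^{10}_{01}$ and $c^r\Omega^{10}_{01}$ with $r\neq 2$, identifying $b^0\Omega^{10}_{01}=c^0\Omega^{10}_{01}=\Omega^{10}_{01}$. Freeness of all the named generators over $k[\Delta]$ inherits from the freeness in the kernel and image lemmas, since every relation we have used was itself a free generator of $\im E_1$.

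For the lifting, cocycle-hood is $k$-linear, so it suffices to check each named generator separately. The candidate lift in every case replaces $a,b,c,d$ by $\sa,\bb,\scc,\sd$ and $\Omega(x)^*$ by $\tilde\Omega(\sx)^*$, and one applies the $e_2$-table from the start of Section~\ref{sec:matrices} term by term. For the four generators of the form $x(a\Omega^{11}_{00}+d\Omega^{01}_{01})$ and $x(a\Omega^{10}_{10}+d\Omega^{00}_{11})$ and for the pure powers $b^r\Omega^{10}_{01}$, $c^r\Omega^{10}_{01}$ only one or two rows of the $e_2$-table contribute, and the $q$-integer factors $[l]-[i]$, $[j+k-1]$ etc.\ vanish or pair up exactly as in the classical computation. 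The main obstacle is the composite generator $bc\Omega^{01}_{10}-ab\Omega^{11}_{00}+ac\Omega^{10}_{10}$, where three distinct rows of the $e_2$-table interact and one must check that the contributions to each of $\tilde\Omega^{11}_{10}$, $\tilde\Omega^{11}_{01}$, $\tilde\Omega^{10}_{11}$ and $\tilde\Omega^{01}_{11}$ cancel individually; this is a finite calculation in which the $q$-integer identities reduce modulo $q-1$ to the cancellations already present classically.
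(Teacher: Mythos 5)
Your argument is correct and is exactly the route the paper takes: the paper's proof is just the one-line observation that the result follows from the preceding computations of $\ker E_2$ and $\im E_1$ together with a direct check of the lifts against the $e_2$-table, and you have simply filled in the elimination of the $j+k\neq 1$ kernel generators and the staircase reduction of the $b^jc^k\Omega^{10}_{01}$ classes that the paper leaves implicit.
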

\begin{proof}
  The first statement follows from our calculation of $\ker E_2$ and
  $\im E_1$, the second by computing using the description of the
  differential on $\bar{K}_{\sM^e}(\sM)$ given earlier.
\end{proof}

\begin{lem}
  $\im E_2$ is freely generated by $(b^{j+1}c^{k+1} +
  \frac{j+k-1}{j+k+1}\Delta b^jc^k )\Omega^{11}_{01}$,  $(b^{j+1}c^{k+1}+\frac{j+k-1}{j+k+1}\Delta
  b^jc^k)\Omega^{10}_{11}$ for any $j,k \geq 0$,
  $b^jc^k(a\Omega^{11}_{10}-d\Omega^{01}_{11})+2b^jc^k(c\Omega^{10}_{11}-b\Omega^{11}_{01})$
  for $j+k\neq 2$, 
  $b^3\Omega^{11}_{01}$ and $c^3\Omega^{10}_{11}$.
\end{lem}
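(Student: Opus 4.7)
The plan is to follow the same pattern as the previous $\im E_1$ calculation. Using the explicit description of $\ker E_2$ from the preceding lemma, I pick a $k[\Delta]$-free complement $C$ of $\ker E_2$ in $K_0(M)_2$ with basis $\{ab^jc^k\Omega^{11}_{00}, ab^jc^k\Omega^{10}_{10}\}_{j,k\ge 0}\cup\{b^jc^k\Omega^{01}_{10}\}_{(j,k)\ne(1,1)}$. The first two families represent coset classes for the kernel relations of the types $a\Omega^{11}_{00}+d\Omega^{01}_{01}$ and $a\Omega^{10}_{10}+d\Omega^{00}_{11}$; the $\Omega^{10}_{01}$-family is absorbed entirely by $\ker E_2$; and the omission of the single $(j,k)=(1,1)$ vector is forced by the special kernel element $bc\Omega^{01}_{10}-ab\Omega^{11}_{00}+ac\Omega^{10}_{10}$. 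Since $E_2$ is injective on $C$, the image $\im E_2=E_2(C)$ is freely generated as a $k[\Delta]$-module by the $E_2$-images of this basis.

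Computing these images from the differential table yields, up to nonzero rational scalars, the claimed generators $G_1^{jk}$ and $G_2^{jk}$ in the $\Omega^{11}_{01}$- and $\Omega^{10}_{11}$-components, together with expressions of the shape $(j+k-2)X_{jk}+2Y_{jk}$ for $(j,k)\ne(1,1)$, where $X_{jk}=b^jc^k(a\Omega^{11}_{10}-d\Omega^{01}_{11})$ and $Y_{jk}=b^jc^k(c\Omega^{10}_{11}-b\Omega^{11}_{01})$. When $j+k\ne 2$ the scalar $(j+k-2)$ is invertible in $k\supset\mathbb{Q}$, so after rescaling and a $k[\Delta]$-linear substitution against the $G_1,G_2$ generators the image assumes the stated $G_3^{jk}$ form. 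When $j+k=2$ (so $(j,k)\in\{(2,0),(0,2)\}$, since $(1,1)$ is excluded) the $X$-coefficient vanishes and the image collapses to $2Y_{jk}$, which direct computation rewrites as $2G_2^{10}-2G_4$ and $2G_5-2G_1^{01}$: this accounts for the two extra stated generators $G_4=b^3\Omega^{11}_{01}$ and $G_5=c^3\Omega^{10}_{11}$.

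The $k[\Delta]$-linear independence of the stated list is verified component-by-component against the free $k[\Delta]$-decomposition $K_0(M)_3=\Omega^{11}_{10}\oplus\Omega^{01}_{11}\oplus\Omega^{11}_{01}\oplus\Omega^{10}_{11}$. Only the $G_3^{jk}$ contribute to the first two summands, where their projections yield the manifestly $k[\Delta]$-free family $\{ab^jc^k,db^jc^k\}_{j+k\ne 2}$; this forces every $G_3$-coefficient in a hypothetical relation to vanish. The residual relation on $\{G_1^{jk},G_2^{jk},G_4,G_5\}$ decouples into parallel relations on the last two summands, and collecting coefficients of each monomial $b^jc^k$ reduces them to a recursion $p_{jk}=-\tfrac{j+k+1}{j+k+3}\Delta\,p_{j+1,k+1}$ with boundary vanishing off the axes $j=0,1$ (resp.\ $k=0,1$) plus the single exceptional identity $s=-\tfrac{\Delta}{2}p_{30}$ tying the $G_4$-coefficient to the axis coefficient $p_{30}$; reverse induction on $\Delta$-degree within each fixed internal-degree slice then kills every coefficient.

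The principal technical obstacle is the bookkeeping at the degenerate stratum $j+k=2$, where the $X$-coefficient vanishes and the $G_3$-family cannot supply the endpoint monomials $b^3\Omega^{11}_{01}$ and $c^3\Omega^{10}_{11}$: justifying that exactly $G_4$ and $G_5$ are the right extra generators, with neither redundancy nor omission, is the crucial consistency check that distinguishes this computation from the $\im E_1$ case.
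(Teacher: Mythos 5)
Your overall strategy is the paper's: pick a $k[\Delta]$-free complement $C$ of $\ker E_2$ in $K_0(M)_2$, note that $E_2$ carries a basis of $C$ to a free generating set of $\im E_2$, and change basis to the stated generators. Your complement differs from the paper's only in retaining $b^2\Omega^{01}_{10}$ and $c^2\Omega^{01}_{10}$ outright, and your treatment of the stratum $j+k=2$ --- rewriting $2Y_{20}=2G_2^{10}-2G_4$ and $2Y_{02}=2G_5-2G_1^{01}$ and observing that $G_2^{10}=b^2c\,\Omega^{10}_{11}$ and $G_1^{01}=bc^2\,\Omega^{11}_{01}$ already occur in the list --- is correct and more careful than the paper's one-line dismissal.

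The genuine gap is the step where you pass from the actual image $(j+k-2)X_{jk}+2Y_{jk}$ to the stated generator $X_{jk}+2Y_{jk}$ ``after rescaling and a $k[\Delta]$-linear substitution against the $G_1,G_2$ generators.'' No such substitution exists. Take $(j,k)=(0,0)$: the grade-zero part of $K_0(M)_2$ in internal degree $2$ is spanned by $\Omega^{10}_{01}$ and $\Omega^{01}_{10}$, whose images are $0$ and $-2X_{00}+2Y_{00}$, so the internal-degree-$4$ part of $\im E_2$ in homological degree $3$ is the line through $X_{00}-Y_{00}$; the element $X_{00}+2Y_{00}$ is simply not in the image. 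More generally, $\tfrac{1}{j+k-2}\bigl((j+k-2)X_{jk}+2Y_{jk}\bigr)$ differs from $X_{jk}+2Y_{jk}$ by $\tfrac{2(j+k-3)}{j+k-2}Y_{jk}$, and $Y_{jk}$ is typically not in the $k[\Delta]$-span of the $G_1,G_2$ (e.g.\ for $k=0$ its $\Omega^{11}_{01}$-component is $-b^{j+1}$, a pure $b$-power, which the $G_1$'s only reach with a factor of $\Delta$). The underlying issue is that the lemma as printed is off by the factor $(j+k-2)$ on the $X_{jk}$-part: the correct free generators for that family are the images $E_2(b^jc^k\Omega^{01}_{10})=(j+k-2)b^jc^k(a\Omega^{11}_{10}-d\Omega^{01}_{11})+2b^jc^k(c\Omega^{10}_{11}-b\Omega^{11}_{01})$ themselves. (The paper's own ``up to a scalar multiple'' has the same defect; the downstream computation of $\HP^3(M)$ is unaffected because all it uses is the relation $X_{jk}\equiv-\tfrac{2}{j+k-2}Y_{jk}$ modulo the span of the $\Omega^{11}_{01}$- and $\Omega^{10}_{11}$-generators.) Your proof should keep the $(j+k-2)$ coefficient rather than attempt to normalise it away.
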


\begin{proof}
  The computation of $\ker E_2$ shows that $\im E_2$ is free on the
  images of $b^jc^ka\Omega^{11}_{00}$, $b^jc^ka\Omega^{10}_{10}$,
  $b^jc^k\Omega^{01}_{10}$ for $j+k\neq 2$, 
  $b^2\Omega^{01}_{01}+ab\Omega^{10}_{10}$ and
  $c^2\Omega^{01}_{10} -ac\Omega^{11}_{00}$
  which are, up to a scalar multiple, the given generators.
\end{proof}

\begin{lem}
  $\ker E_3$ is freely generated by $b^jc^k\Omega^{10}_{11}, b^jc^k
  \Omega^{11}_{01}$, and
  $b^jc^k(a\Omega^{11}_{10}-d\Omega^{01}_{11})$ for all $j,k \geq 0$.
\end{lem}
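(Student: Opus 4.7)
The plan is to follow the same pattern used for $\ker E_0$, $\ker E_1$, and $\ker E_2$. First I would identify a $k[\Delta]$-basis of $K_0(M)_3$: the grade condition $i - i' - l + l' = 0$ combined with the identity $ad = \Delta + bc$ shows that $K_0(M)_3$ is free as a $k[\Delta]$-module on the four families $ab^jc^k\Omega^{11}_{10}$, $b^jc^k\Omega^{11}_{01}$, $b^jc^k\Omega^{10}_{11}$ and $db^jc^k\Omega^{01}_{11}$, indexed by $j,k \geq 0$.

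Next I would read off $E_3$ from the $e_3$ table specialized to $q = 1$, rewriting any occurrence of $ad$ as $\Delta + bc$. A short calculation shows that $E_3$ annihilates both $b^jc^k\Omega^{11}_{01}$ and $b^jc^k\Omega^{10}_{11}$, and that $E_3(ab^jc^k\Omega^{11}_{10})$ and $E_3(db^jc^k\Omega^{01}_{11})$ produce the same image, namely
\[ \bigl((j+k-2)\Delta b^jc^k + (j+k)b^{j+1}c^{k+1}\bigr)\Omega^{11}_{11}. \]
In particular all three families listed in the statement lie in $\ker E_3$; it remains to prove the reverse inclusion.

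For this I would change $k[\Delta]$-basis, replacing $ab^jc^k\Omega^{11}_{10}$ by the kernel element $b^jc^k(a\Omega^{11}_{10}-d\Omega^{01}_{11})$ and keeping the other three families. In the new basis, $E_3$ annihilates three of the four generating families, so a general kernel element is determined by the polynomial coefficients $\beta_{jk}(\Delta)$ of $db^jc^k\Omega^{01}_{11}$, which must satisfy
\[ \sum_{j,k \geq 0} \beta_{jk}(\Delta)\bigl((j+k-2)\Delta b^jc^k + (j+k)b^{j+1}c^{k+1}\bigr) = 0. \]
Collecting the coefficient of $b^Jc^K$ yields, for each $J,K \geq 0$, the single relation $(J+K-2)\bigl(\Delta\beta_{JK}+\beta_{J-1,K-1}\bigr) = 0$, with the convention that $\beta_{J-1,K-1} = 0$ if either index is negative.

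The main obstacle, albeit a mild one, will be to show that this system forces every $\beta_{JK}$ to vanish, despite the factor $J+K-2$ leaving $\beta_{11}$, $\beta_{20}$ and $\beta_{02}$ unconstrained at their own positions. I plan to finish with a finite-support argument: since any element has only finitely many nonzero coefficients, among all $(J,K)$ with $\beta_{JK}\neq 0$ one may pick a pair with $J+K$ maximal; then $\beta_{J+1,K+1} = 0$ by maximality, so the relation at $(J+1,K+1)$ reduces to $(J+K)\beta_{JK} = 0$, contradicting $\beta_{JK} \neq 0$ whenever $J + K \geq 1$, while the relation at $(0,0)$ directly forces $\beta_{00} = 0$. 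This rules out any further kernel elements and completes the proof.
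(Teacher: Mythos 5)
Your proposal is correct and follows essentially the same route as the paper: compute $E_3$ on the four $k[\Delta]$-generating families of $K_0(M)_3$, observe that two are killed and the other two have equal image $\bigl((j+k-2)\Delta b^jc^k+(j+k)b^{j+1}c^{k+1}\bigr)\Omega^{11}_{11}$, and conclude. The paper simply asserts that the result "follows immediately" from this computation, whereas you spell out the remaining step (the $k[\Delta]$-linear independence of those images, via your finite-support argument), which is a legitimate and correct elaboration rather than a different method.
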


\begin{proof}
  $E_3$ kills $b^jc^k\Omega^{10}_{11}$ and $b^jc^k\Omega^{11}_{01}$ and
  sends $b^jc^ka\Omega^{11}_{10}$ and $b^jc^kd\Omega^{01}_{11}$ to $((j+k)b^{j+1}c^{k+1} + (j+k-2)\Delta
  b^jc^k ) \Omega^{11}_{11}$.  The result follows immediately.
\end{proof}

Again we can read off the cohomology group:
\begin{cor}
  $\HP^3(M)$ is freely generated by the images of 
  \[ b^j\Omega^{11}_{01}, c^k\Omega^{11}_{01},
		b^k\Omega_{11}^{10}, c^j\Omega_{11}^{10} \]
	for $j \neq 3$ together with
	$ax\Omega^{11}_{10}-dx\Omega_{11}^{01}$ for $x = b^2,
	bc, c^2$. Each of these elements lifts trivially to an element
	of $\HH^3(\sM)$.
\end{cor}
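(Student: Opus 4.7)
The plan is to combine the two preceding lemmas describing $\ker E_3$ and $\im E_2$, carry out a straightforward change of basis in the quotient, and then verify the lifting by direct substitution in the tabulated formulas for $e_3$. The proof breaks naturally into two parts: (i) identifying $\HP^3(M) = \ker E_3/\im E_2$ as a $k[\Delta]$-module, and (ii) showing each listed generator lifts trivially.

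For part (i), I would use the Type~C generators of $\im E_2$, namely $b^jc^k(a\Omega^{11}_{10}-d\Omega^{01}_{11})+2b^jc^k(c\Omega^{10}_{11}-b\Omega^{11}_{01})$ for $j+k\neq 2$, to replace each $b^jc^k(a\Omega^{11}_{10}-d\Omega^{01}_{11})$ with $j+k\neq 2$ modulo image by $2b^{j+1}c^k\Omega^{11}_{01}-2b^jc^{k+1}\Omega^{10}_{11}$, leaving only the three values $j+k=2$ from this family. Next, iterating the Type~A relation $b^{j+1}c^{k+1}\Omega^{11}_{01}\equiv -\frac{j+k-1}{j+k+1}\Delta b^jc^k\Omega^{11}_{01}$ rewrites any $b^jc^k\Omega^{11}_{01}$ with $\min(j,k)\geq 1$ as a polynomial-in-$\Delta$ multiple of $b^{|j-k|}\Omega^{11}_{01}$ or $c^{|j-k|}\Omega^{11}_{01}$; together with the relation $b^3\Omega^{11}_{01}\equiv 0$ this leaves $b^j\Omega^{11}_{01}$ for $j\neq 3$ and $c^k\Omega^{11}_{01}$ for $k\geq 1$. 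A symmetric argument for Type~B handles the $\Omega^{10}_{11}$-family, giving $c^j\Omega^{10}_{11}$ for $j\neq 3$ and $b^k\Omega^{10}_{11}$ for $k\geq 1$. Freeness then follows from the observation that the generators of $\im E_2$ listed in the preceding lemma each have a unique ``leading'' free generator in $\ker E_3$, so the quotient decomposes as a direct sum of cyclic $k[\Delta]$-modules corresponding to the unreduced generators, each manifestly free.

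For part (ii), I would substitute sans-serif variables into each representative and apply the tabulated formulas for $e_3$. For $\bb^j\tilde\Omega^{11}_{01}$, $\scc^k\tilde\Omega^{11}_{01}$, $\bb^k\tilde\Omega^{10}_{11}$, $\scc^j\tilde\Omega^{10}_{11}$, the $e_3$-rows $(1,1,0,1)$ and $(1,0,1,1)$ apply and both produce a factor $[l]-[i]$ or $[i]-[l]$ which vanishes since $i=l=0$ for each of these monomials. For $\bb^j\scc^k(\sa\tilde\Omega^{11}_{10}-\sd\tilde\Omega^{01}_{11})$ with $j+k=2$, the $[j+k-2]=[0]=0$ terms vanish and the remaining summands (one from row $(1,1,1,0)$, one from row $(0,1,1,1)$) are both equal to $q^{-3}[2]\bb^{j+1}\scc^{k+1}\tilde\Omega^{11}_{11}$ and cancel in the difference.

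The main obstacle is the linear-algebraic bookkeeping in part (i): because the Type~C generators of $\im E_2$ mix all three families of $\ker E_3$, one must check that after using them to eliminate the $a\Omega^{11}_{10}-d\Omega^{01}_{11}$-terms, no residual relations are imposed on the coefficients in the $\Omega^{11}_{01}$ and $\Omega^{10}_{11}$ families beyond those coming directly from Types~A and B. This is resolved by noting that each Type~C generator has a nonzero $b^jc^k(a\Omega^{11}_{10}-d\Omega^{01}_{11})$-component which is absent from every Type~A or B generator, so the Type~C generators are $k[\Delta]$-independent of the others, and their projections to the $\Omega^{11}_{01}\oplus \Omega^{10}_{11}$ part merely carry the desired reduction rather than imposing new relations.
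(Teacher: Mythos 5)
Your argument is correct and follows the same route the paper intends: the paper simply "reads off" $\HP^3(M)=\ker E_3/\im E_2$ from the two preceding lemmas and verifies the lifts against the tabulated $e_3$, and your reduction (eliminating the $a\Omega^{11}_{10}-d\Omega^{01}_{11}$ terms via the mixed generators of $\im E_2$, then reducing the two remaining families diagonally by the $\Delta$-recursions together with $b^3\Omega^{11}_{01},c^3\Omega^{10}_{11}\equiv 0$) together with the cancellation of the two $q^{-3}[2]\bb^{j+1}\scc^{k+1}\tilde\Omega^{11}_{11}$ terms is exactly the omitted computation. Your freeness argument via distinct leading terms is sound and in fact supplies detail the paper leaves implicit.
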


\begin{lem}
  $\im E_3$ is freely generated by $\Delta\Omega^{11}_{11}$ and
  $(b^{j+1}c^{k+1}+\frac{j+k-2}{j+k}b^jc^k)\Omega^{11}_{11}$ for $j,k$
  not both zero.
\end{lem}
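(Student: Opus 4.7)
The plan is to combine the explicit description of $\ker E_3$ from the previous lemma with a description of $K_0(M)_3$ as a free $k[\Delta]$-module, so that $\im E_3 \cong K_0(M)_3/\ker E_3$ carries its generators over to the image under $E_3$.

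First I would enumerate the possible exterior types $(i',j',k',l')$ in homological degree three, namely $(1,1,1,0)$, $(1,1,0,1)$, $(1,0,1,1)$ and $(0,1,1,1)$, and in each case solve the grade-zero condition $i-i'-l+l'=0$ for the exponent of $a$ or $d$.  Using the identity $ad=bc+\Delta$ to rewrite $(ad)^l$ as a $k[\Delta]$-linear combination of $(bc)^i$, this shows that $K_0(M)_3$ is a free $k[\Delta]$-module on
\[ ab^jc^k\Omega^{11}_{10},\quad b^jc^k\Omega^{11}_{01},\quad b^jc^k\Omega^{10}_{11},\quad db^jc^k\Omega^{01}_{11}\qquad(j,k\geq 0). \]
By the previous lemma $\ker E_3$ is free over $k[\Delta]$ on the middle two families together with the diagonal $b^jc^k(a\Omega^{11}_{10}-d\Omega^{01}_{11})$, so a set of representatives for free $k[\Delta]$-generators of $K_0(M)_3/\ker E_3$ is $\{ab^jc^k\Omega^{11}_{10}:j,k\geq 0\}$, and therefore $\im E_3$ is free over $k[\Delta]$ on the images $E_3(ab^jc^k\Omega^{11}_{10})$.

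Next I would compute these images by specialising the $(1,1,1,0)$ row of the $e_3$-table to $q=1$ and simplifying via $ad=bc+\Delta$:
\[ E_3(ab^jc^k\Omega^{11}_{10})=\bigl((j+k)\,b^{j+1}c^{k+1}+(j+k-2)\Delta\, b^jc^k\bigr)\Omega^{11}_{11}. \]
For $(j,k)=(0,0)$ this is $-2\Delta\Omega^{11}_{11}$, giving a nonzero scalar multiple of $\Delta\Omega^{11}_{11}$.  For $(j,k)\neq(0,0)$ the leading coefficient $j+k$ is a nonzero element of $k$, so the image is a nonzero scalar multiple of $(b^{j+1}c^{k+1}+\tfrac{j+k-2}{j+k}\Delta\, b^jc^k)\Omega^{11}_{11}$.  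This identifies $\im E_3$ with the $k[\Delta]$-submodule of $K_0(M)_4$ spanned by the stated elements, and freeness is inherited from the freeness of $K_0(M)_3/\ker E_3$.

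The calculation is largely bookkeeping, and the main point to keep straight is the interaction between the $k$-basis of monomials in $a,b,c,d$ on the one hand and the $k[\Delta]$-module structure used in the statement on the other.  Specifically, one must ensure that the diagonal generator $a\Omega^{11}_{10}-d\Omega^{01}_{11}$ of $\ker E_3$ is correctly used to collapse the $ab^jc^k\Omega^{11}_{10}$ and $db^jc^k\Omega^{01}_{11}$ families to a single family of quotient generators, and that when $e_3$ produces a term $ad\cdot b^jc^k\Omega^{11}_{11}$ it is rewritten as $(bc+\Delta)b^jc^k\Omega^{11}_{11}$ so that the coefficient $(j+k-2)/(j+k)$ emerges with the correct sign and the case $(j,k)=(0,0)$ is correctly separated out.
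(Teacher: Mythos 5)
Your proof is correct and follows essentially the same route as the paper: the paper likewise reads off from the computation of $\ker E_3$ that $\im E_3$ is free on the images of $ab^jc^k\Omega^{11}_{10}$, which are nonzero scalar multiples of the stated generators. You simply make explicit the bookkeeping (the grade-zero description of $K_0(M)_3$, the $q=1$ specialisation, the rewriting $ad=bc+\Delta$, and the separation of the $(j,k)=(0,0)$ case) that the paper leaves implicit.
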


\begin{proof}
  Our computation of $\ker E_3$ shows that $\im E_3$ is free on the
  images of $b^jc^ka\Omega^{11}_{10}$, which up to a nonzero scalar
  multiple are the generators given.
\end{proof}

\begin{cor}
  $\HP^4(M)$ is generated as a $k[\Delta]$-module by $\Omega^{11}_{11}$,
  $b^j\Omega^{11}_{11}$ and $c^j\Omega^{11}_{11}$ for $j,k\geq 0$.
  $k[\Delta]$ acts trivially on $\Omega^{11}_{11}$ and freely on the
    other generators.
\end{cor}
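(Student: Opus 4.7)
The plan is to identify $\HP^4(M)$ with the cokernel of $E_3$ on $K_0(M)$, and to read off its $k[\Delta]$-module structure from the presentation of $\im E_3$ given by the preceding lemma.

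Since $M^!$ has no elements of exterior degree exceeding $4$, the cocomplex is zero in homological degree $5$, hence $E_4=0$ and $\HP^4(M)=K_0(M)_4/\im E_3$. The grade-zero condition on $K_0(M)_4$ combined with the exterior part $\Omega^{11}_{11}$ forces $i=l$ in any monomial $a^ib^jc^kd^l\Omega^{11}_{11}$, so these elements lie in $k[ad,b,c]=k[\Delta,b,c]$. Consequently $K_0(M)_4$ is free over $k[\Delta]$ on the basis $\{b^jc^k\Omega^{11}_{11}\}_{j,k\geq 0}$.

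Writing $y_{jk}$ for the class of $b^jc^k\Omega^{11}_{11}$ in the quotient, the preceding lemma gives the defining relations $\Delta y_{00}=0$ and
\[
	y_{j+1,k+1}=\tfrac{2-j-k}{j+k}\,y_{jk}\quad\text{for }(j,k)\ne (0,0).
\]
The first relation makes $k[\Delta]$ act trivially on $y_{00}$. Iterating the second along diagonals I would obtain, for $j>k\geq 1$ with $s=j-k$, a telescoping product
\[
	y_{jk}=(-1)^k\,\tfrac{s-2}{s+2k-2}\,y_{s,0},
\]
nonzero exactly when $s\ne 2$; symmetrically for $k>j$. On the diagonal $j=k$, the relation at $(1,1)$ yields $y_{22}=0$ (hence $y_{rr}=0$ for $r\geq 2$), while $y_{11}$ never appears as a target of the recursion and therefore remains unconstrained. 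Thus modulo $\im E_3$ every $y_{jk}$ is a scalar multiple of one of $y_{00}$, $y_{j,0}\ (j\geq 1)$, $y_{0,k}\ (k\geq 1)$, or $y_{11}$.

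To show no further relations hold among these, I would construct the candidate module
\[
	Q = k\cdot\tilde y_{00}\;\oplus\;\bigoplus_{j\geq 1}k[\Delta]\cdot\tilde y_{j,0}\;\oplus\;\bigoplus_{k\geq 1}k[\Delta]\cdot\tilde y_{0,k}\;\oplus\;k[\Delta]\cdot\tilde y_{11}
\]
(with $\Delta$ acting as zero on $\tilde y_{00}$ and freely on the other summand generators) and the $k[\Delta]$-linear map $\psi\colon K_0(M)_4\to Q$ sending each $b^jc^k\Omega^{11}_{11}$ to the scalar multiple of the appropriate summand generator dictated by the reduction above. The verification that $\psi$ kills both families of generators of $\im E_3$ reduces to the one-step telescoping identity, so $\psi$ descends to a surjection $\bar\psi\colon \HP^4(M)\to Q$. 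For injectivity, any class whose image vanishes can be reduced modulo $\im E_3$ to a $k[\Delta]$-combination of the distinguished generators; applying $\psi$ then forces every coefficient of a free generator to vanish while forcing the coefficient of $y_{00}$ to lie in $(\Delta)\subseteq k[\Delta]$, which places the original element in $\im E_3$.

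The main obstacle is bookkeeping in the telescoping calculation: correctly handling the corner cases $s=2$ (where the product collapses to zero, producing $y_{s+r,r}=0$ for $r\geq 1$) and $(j,k)=(1,1)$ (where the recursion terminates and $y_{11}=bc\,\Omega^{11}_{11}$ survives as an independent free summand). Once these are accounted for, the remaining verifications are mechanical.
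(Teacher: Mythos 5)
Your argument is correct and is essentially the paper's: the corollary is deduced directly from the preceding lemma describing $\im E_3$ together with $E_4=0$, and you are simply making the resulting quotient computation explicit (your surjection $\bar\psi\colon \HP^4(M)\to Q$ is a more elaborate packaging of the observation that the relations are triangular, each one eliminating the single basis element $y_{j+1,k+1}$ with unit coefficient). Two small points. First, the recursion should read $y_{j+1,k+1}=\frac{2-j-k}{j+k}\,\Delta\,y_{jk}$: the displayed formula for $e_3$ shows the lower term of each generator of $\im E_3$ carries a factor of $\Delta$, which the lemma on $\im E_3$ (and hence your relation) drops, apparently a typo; this changes your explicit telescoping coefficients by powers of $\Delta$ but not the isomorphism type of the quotient, for the triangularity reason just mentioned. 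Second, your observation that $y_{11}=bc\,\Omega^{11}_{11}$ never occurs as the target of a relation (the source $(j,k)=(0,0)$ being excluded) and therefore survives as an independent free generator is correct and agrees with Theorem \ref{HPM}; note that the corollary as literally stated omits $bc\,\Omega^{11}_{11}$ from its generating list, so what you prove is the (correct) version recorded in the theorem rather than the literal statement.
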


Since $E_4=0$, all kernel elements lift trivially.

\section{Acknowledgements}
I would like to thank St\'ephane Launois for suggesting this problem and
for many helpful suggestions.   This research was financially supported
through his EPSRC First Grant EP/I018549/1.

\bibliography{scl}
\bibliographystyle{alpha}

\end{document}